\newtheorem{theorem}{Theorem}[section]
\newtheorem{corollary}[theorem]{Corollary}
\newtheorem{lemma}[theorem]{Lemma}
\newtheorem{proposition}[theorem]{Proposition}
\newtheorem{conjecture}[theorem]{Conjecture}
\newtheorem{thm-dfn}[theorem]{Theorem-Definition}
\newtheorem{claim}[theorem]{Claim}
\newtheorem{clm-dfn}[theorem]{Claim-Definition}
\newtheorem{example}[theorem]{Example}
\newtheorem{question}[theorem]{Question}
\theoremstyle{definition}
\newtheorem{definition}[theorem]{Definition}
\newtheorem{remark}[theorem]{Remark}
\numberwithin{equation}{section}
\newcommand{\quash}[1]{}  
\newcommand{\fraka}{{\mathfrak a}}
\newcommand{\frakg}{{\mathfrak g}}
\newcommand{\frakn}{{\mathfrak n}}
\newcommand{\bbC}{{\mathbb C}}
\newcommand{\bbQ}{{\mathbb Q}}
\newcommand{\bbR}{{\mathbb R}}
\newcommand{\bbZ}{{\mathbb Z}}
\newcommand{\calB}{{\mathcal B}}
\newcommand{\calI}{{\mathcal I}}
\newcommand{\calM}{{\mathcal M}}
\newcommand{\calO}{{\mathcal O}}
\newcommand{\calU}{{\mathcal U}}
\newcommand{\ba}{{\boldsymbol{a}}}
\newcommand{\vol}{\textnormal{vol}}
\newcommand{\mtrx}[4]{\left( \begin{array}{cc} #1 & #2 \\ #3 & #4 \end{array} \right)}
\newcommand{\Hom}{{\textnormal{Hom}}}
\newcommand{\Ms}[3]{{M_{ #1 , #2 } ( #3 )}}
\newcommand{\Mss}[3]{{M_{ #1 , #2 } ( #3 )}}
\newcommand{\ms}[3]{{M^{\circ}_{ #1 , #2 } ( #3 )}}
\begin{document}

\title{On tempered representations}
\author[David Kazhdan and Alexander Yom Din]{David Kazhdan and Alexander Yom Din}

\begin{abstract}
	Let $G$ be a unimodular locally compact group. We define a property of irreducible unitary $G$-representations $V$ which we call c-temperedness, and which for the trivial $V$ boils down to F{\o}lner's condition (equivalent to the trivial $V$ being tempered, i.e. to $G$ being amenable). The property of c-temperedness is a-priori stronger than the property of temperedness.
	
	\medskip
	
	We conjecture that for semisimple groups over local fields temperedness implies c-temperedness. We check the conjecture for a special class of tempered $V$'s, as well as for all tempered $V$'s in the cases of $G := SL_2 (\bbR)$ and of $G = PGL_2 (\Omega)$ for a non-Archimedean local field $\Omega$ of characteristic $0$ and residual characteristic not $2$. We also establish a weaker form of the conjecture, involving only $K$-finite vectors.
	
	\medskip
	
	In the non-Archimedean case, we give a formula expressing the character of a tempered $V$ as an appropriately-weighted conjugation-average of a matrix coefficient of $V$, generalizing a formula of Harish-Chandra from the case when $V$ is square-integrable.
\end{abstract}

\maketitle

\setcounter{tocdepth}{1}
\tableofcontents


\section{Introduction}\label{sec intro}

\subsection{}

Throughout the paper, we work with a unimodular second countable locally compact group $G$, and fix a Haar measure $dg$ on it. In the introduction, in \S\ref{ssec intro 1} - \S\ref{ssec intro 2.5} $G$ is assumed semisimple over a local field, while in \S\ref{ssec intro 3} - \S\ref{ssec intro 4} there is no such assumption. After the introduction, in \S\ref{sec Kfin} - \S\ref{sec proof of SL2R} $G$ is assumed semisimple over a local field, while in \S\ref{sec bi tempered} - \S\ref{sec ctemp is temp} there is no such assumption. Unitary representations of $G$ are pairs $(V , \pi)$, but for lightness of notation we denote them by $V$, keeping $\pi$ implicit.

\subsection{}\label{ssec intro 1}

Assume that $G$ is a semisimple group over a local field\footnote{So, for example, $G$ can be taken $SL_n (\bbR)$ or $SL_n (\bbQ_p)$.}. The characterization of temperedness of irreducible unitary $G$-representations in terms of the rate of decrease of $K$-finite matrix coefficients is well-studied (see for example \cite{Wa,CoHaHo,Be}). Briefly, fixing a maximal compact subgroup $K \subset G$, an irreducible unitary $G$-representation $V$ is tempered if and only if for every two $K$-finite vectors $v_1 , v_2 \in V$ there exists $C>0$ such that $$ |\langle gv_1 , v_2 \rangle | \leq C \cdot \Xi_G (g)$$ for all $g \in G$, where $\Xi_G : G \to \bbR_{\ge 0}$ is Harish-Chandra's $\Xi$-function (see \S\ref{ssec V1 and Xi} for a reminder on the definition of $\Xi_G$). When considering matrix coefficients of more general vectors, differentiating between tempered and non-tempered irreducible unitary $G$-representations becomes more problematic, as the following example shows.

\begin{example}[see Claim \ref{clm counterexample}]\label{rem counterexample}
	Let $G := PGL_2 (\Omega)$, $\Omega$ a local field. Denote by $A \subset G$ the subgroup of diagonal matrices. Given a unitary $G$-representation $V$ let us denote $$ \calM_V (A) := \left\{ a \mapsto \langle a v_1 , v_2 \rangle \right\}_{v_1 , v_2 \in V} \subset C(A),$$ i.e. the set of matrix coefficients of $V$ restricted to $A$. Let us also denote $$ \widehat{L^1} (A) :=   \left\{ a \mapsto \int_{\hat{A}} \chi (a) \cdot \phi (\chi) \cdot d \chi \right\}_{\phi \in L^1 (\hat{A})} \subset C(A),$$ i.e. the set of Fourier transforms of $L^1$-functions on $\hat{A}$. Then for any non-trivial irreducible unitary $G$-representation $V$ we have $$ \calM_V (A) = \widehat{L^1} (A).$$
\end{example}

The remedy proposed in this paper is that, instead of analysing the pointwise growth of matrix coefficients, we analyse their ``growth in average", i.e. the behaviour of integrals of norm-squared matrix coefficients over big balls.

\subsection{}

We fix a norm\footnote{In the non-Archimedean case, norms on finite-dimensional vector spaces are discussed, for example, in \cite[Chapter II, \S 1]{We}.} $|| - ||$ on the vector space $\frakg := {\rm Lie} (G)$ and consider also the induced operator norm $|| - ||$ on $\textnormal{End} (\frakg)$. We  define the ``radius" function $\mathbf{r} : G \to \bbR_{\ge 0}$ by $$ \mathbf{r} (g) := \log \left( \max \{ || \textnormal{Ad} (g) ||,  || \textnormal{Ad} (g^{-1}) || \} \right)$$ where $\textnormal{Ad} : G \to \textnormal{Aut} (\frakg)$ is the adjoint representation. We denote then by $G_{<r} \subset G$ the subset of elements $g$ for which $\mathbf{r} ( g) < r$.

\begin{conjecture}[``asymptotic Schur orthogonality relations"]\label{main conj red exact}
	Let $V$ be a tempered irreducible unitary $G$-representation. There exist $\mathbf{d}(V) \in \bbZ_{\ge 0}$ and $\mathbf{f} (V) \in \bbR_{>0}$ such that  for all $v_1 , v_2 , v_3 , v_4 \in V$ we have $$ \lim_{r \to +\infty} \frac{\int_{G_{< r}} \langle g v_1 , v_2 \rangle \overline{\langle gv_3 , v_4 \rangle } \cdot dg}{  r^{\mathbf{d} (V)}} = \frac{1}{\mathbf{f} (V)} \cdot \langle v_1 , v_3 \rangle  \overline{\langle v_2 , v_4 \rangle}.$$
\end{conjecture}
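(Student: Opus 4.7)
The plan is to reduce to Harish-Chandra's structure of the tempered dual and then perform an asymptotic radial analysis. For $G$ semisimple over a local field, every tempered irreducible unitary $V$ appears as a direct summand of a parabolically induced representation $\mathrm{Ind}_P^G(\sigma \otimes \chi)$, where $P = MAN$ is a standard parabolic, $\sigma$ is a (limit of) discrete series of $M$, and $\chi$ is a unitary character of $A$. The first step is to reduce the conjecture to this induced case: since the inner integral in the conjecture is sesquilinear-continuous in $(v_1,v_2,v_3,v_4)$, and since $V$ sits as a $G$-stable orthogonal summand inside the induced representation, projections commute with the averaging and let one transfer the asserted identity. The conjectured exponent should equal $\mathbf{d}(V) = \dim A$, i.e.\ the dimension of the continuous parameter in the family containing $V$; in the extreme case $\dim A = 0$ we recover the classical Schur orthogonality for square-integrable representations.

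Once reduced to the induced model, I would realize $V \subset L^2(K) \otimes V_\sigma$ by restriction of sections to $K$, so that for $v_i \in V$ one has the explicit formula
\[
\langle g v_1, v_2\rangle = \int_K \chi(H(gk))\, \langle \sigma(m(gk))\, v_1(\kappa(gk)),\, v_2(k)\rangle\, dk,
\]
where $g = n(g)\, \exp(H(g))\, m(g)\, \kappa(g)$ is a suitable Iwasawa-type factorization and $H : G \to \fraka$ takes values in $\mathrm{Lie}(A)$. The next step is to compute the integral $\int_{G_{<r}} \langle gv_1, v_2\rangle \overline{\langle gv_3, v_4\rangle}\, dg$ using the Cartan decomposition $G = K\, \overline{A^+}\, K$, under which the Haar measure becomes $dk_1\, \Delta(a)\, da\, dk_2$ with $\Delta(a)$ of size $e^{2\rho\log a}$ on the walls away from them. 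On this decomposition, $\mathbf{r}$ is (up to a bounded perturbation) the dominant-weight norm of $\log a$, so that $G_{<r}$ is essentially $K \cdot \{a \in \overline{A^+} : \|\log a\| < r\} \cdot K$.

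The heart of the argument is then a stationary-phase / oscillatory-integral estimate on $A$. Substituting the Iwasawa formula into the sesquilinear integrand, separating the $\chi$ phase from the slowly varying part, and carrying out the $K$-integrations first, one is left with an integral of the form
\[
\int_{\|H\|<r} \Phi(v_1,\dots,v_4; H)\, dH \;+\; \text{(lower-order corrections)},
\]
where $\Phi$ is a bounded, continuous density which the discrete-series Schur orthogonality for $\sigma$ identifies on the diagonal with $\frac{1}{\mathbf{f}(V)} \langle v_1,v_3\rangle \overline{\langle v_2, v_4\rangle}$. Dividing by $r^{\dim A}$ and using that the volume of $\{\|H\|<r\}$ in $\fraka$ grows like $r^{\dim A}$ yields the limit. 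The constant $\mathbf{f}(V)$ is then expressed in terms of the formal degree of $\sigma$, the volume of the unit ball in $\fraka$, and standard Harish-Chandra $c$-function factors.

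The main obstacle is twofold. First, controlling the subleading terms uniformly in the vectors $v_i$: the matrix coefficient of an induced tempered representation has an asymptotic expansion in which the leading exponents lie precisely on the unitary axis, so the naive integrals produce oscillatory cancellation rather than absolute convergence, and one must exploit that the ``off-diagonal'' contributions of the Iwasawa integral decay — this is essentially a non-abelian Riemann–Lebesgue statement for the intertwining operators $A(w,\sigma,\chi)$, and its uniform version is exactly what fails in the easiest toy computations. Second, the reduction to irreducible $V$ requires handling the reducibility of $\mathrm{Ind}_P^G(\sigma\otimes\chi)$ at special $\chi$ (the $R$-group phenomenon); here one must check that the orthogonal projectors onto the irreducible summands are given by finite linear combinations of normalized intertwiners, so that the asymptotic identity descends cleanly. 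Both issues are substantive enough that a complete proof in general likely requires the full Plancherel formula, which is presumably why the paper settles for verifying the conjecture only in rank-one cases.
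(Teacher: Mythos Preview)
This statement is a \emph{conjecture} in the paper; there is no general proof to compare against. The paper establishes it only for the spherical principal series $V_1$ of slowest decrease (Theorem~\ref{thm V1 c temp}, via a uniform bound of all matrix coefficients by $\Xi_G^2$) and for all tempered irreducibles of $SL_2(\bbR)$ and $PGL_2(\Omega)$ (via $K$-type--uniform pointwise estimates, Claims~\ref{clm SL2 2}--\ref{clm SL2 3}). For general $G$ and $V$ it proves only the $K$-finite analogue (Theorem~\ref{thm main Kfin}). You correctly diagnose at the end that a full proof is out of reach by your method.

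Your outline contains a concrete error: the assertion $\mathbf{d}(V) = \dim A$ is false. Already for $V_1$ in $SL_2(\bbR)$ one has $\dim A = 1$, but $\Xi_G(\exp x) \sim C\, x\, e^{-x}$, so $\int_{G_{<r}} \Xi_G^2 \sim r^3$ and $\mathbf{d}(V_1) = 3$. In general the asymptotic expansion of $K$-finite matrix coefficients has terms $e^{\langle \lambda , x\rangle} q(x)$ with $q$ polynomial, and the correct exponent (see Claims~\ref{clm appb lat} and \ref{clm appb}) is $\max_{\ell} \sum_{j \in J_{\lambda^{(\ell)}}} (1 + 2 m_j^{(\ell)})$, which exceeds $\dim A$ whenever non-constant polynomial factors appear --- precisely at the reducibility points and limit-of-discrete-series parameters where your $R$-group worries live. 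So your stationary-phase heuristic misidentifies the leading order exactly in the delicate cases.

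The paper's route to its partial results is also rather different from yours. Rather than computing the asymptotic directly from Iwasawa integrals, it reduces (Proposition~\ref{prop from folner to exact}) the conjecture for a given $V$ to two estimates: a uniform comparison $\limsup M_{v_1,v_2}(r)/M_{v_0,v_0}(r) < \infty$ and a boundary smallness $M_{v_1,v_2}(r+r') - M_{v_1,v_2}(r-r') = o(M_{v_0,v_0}(r))$. Once these hold, a soft Banach--Alaoglu / Schur's-lemma argument (Proposition~\ref{prop c-tempered orth rel}) extracts the orthogonality relation with no further computation; the exact value of $\mathbf{d}(V)$ is then read off from the $K$-finite theorem. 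The hard analysis is thus isolated in a single uniform pointwise bound (Claim~\ref{clm SL2 3}), proved for $SL_2(\bbR)$ via Jacobi-polynomial inequalities and for $PGL_2(\Omega)$ via oscillatory $p$-adic integrals. Your approach would have to \emph{compute} the limiting density, whereas the paper only needs to \emph{bound} integrals; the compensating advantage of your line, if it could be made to work, is that it would identify $\mathbf{f}(V)$ explicitly in terms of Plancherel data, something the paper's soft argument does not yield.
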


\begin{remark}[see Claim \ref{clm doesnt depend on norm}]\label{rem doesnt depend on norm}
	The validity of Conjecture \ref{main conj red exact}, as well as the resulting invariants $\mathbf{d} (V)$ and $\mathbf{f} (V)$ (and of other similar results/conjectures below - see the formulation of Claim \ref{clm doesnt depend on norm}), do not depend on the choice of the norm $|| - ||$ on $\frakg$ (used to construct the subsets $G_{<r}$).
\end{remark}

\begin{remark}[see Remark \ref{rem ctemp red is temp}]
	An irreducible unitary $G$-representation $V$ for which the condition of Conjecture \ref{main conj red exact} is verified is tempered.
\end{remark}

\begin{remark}
	In the notation of Conjecture \ref{main conj red exact}, $\mathbf{d} (V) = 0$ if and only if $V$ is square-integrable. In that case, $\mathbf{f} (V)$ is the well-known formal degree of $V$.
\end{remark}

\begin{remark}[following from Proposition \ref{prop c-tempered orth rel cross}]
	Let $V$ and $W$ be two tempered irreducible unitary $G$-representations for which Conjecture \ref{main conj red exact} holds, and which are non-isomorphic. Then for all $v_1 , v_2 \in V$ and $w_1 , w_2 \in W$  one has $$ \lim_{r \to +\infty} \frac{\int_{G_{<r}} \langle g v_1 , v_2 \rangle \overline{\langle g w_1 , w_2 \rangle} \cdot dg}{r^{(\mathbf{d} (V) + \mathbf{d} (W))/2}} = 0.$$
\end{remark}

\subsection{}\label{ssec intro 1.25}

We show the following statement, weaker than Conjecture \ref{main conj red exact}:

\begin{theorem}[see \S\ref{sec Kfin}]\label{thm main Kfin}
	Let $V$ be a tempered irreducible unitary $G$-representation and $K \subset G$ a maximal compact subgroup. There exists $\mathbf{d}(V) \in \bbZ_{\ge 0}$ such that:
	\begin{enumerate}
		\item if $G$ is non-Archimedean, there exists $\mathbf{f} (V) \in \bbR_{>0}$ such that for all $K$-finite\footnote{When $G$ is non-Archimedean $K$-finite is the same as smooth (in particular does not depend on $K$).} $v_1 , v_2 , v_3 , v_4 \in V$ we have $$ \lim_{r \to +\infty} \frac{\int_{G_{< r}} \langle g v_1 , v_2 \rangle  \overline{\langle gv_3 , v_4 \rangle } \cdot dg}{ r^{\mathbf{d} (V)}} = \frac{1}{\mathbf{f} (V)} \cdot \langle v_1 , v_3 \rangle  \overline{\langle v_2 , v_4 \rangle}.$$
		\item If $G$ is Archimedean, for any given non-zero $K$-finite vectors $v_1 , v_2 \in V$ there exists $C(v_1 , v_2) >0$ such that $$ \lim_{r \to +\infty} \frac{\int_{G_{< r}} | \langle g v_1 , v_2 \rangle |^2 \cdot dg}{ r^{\mathbf{d} (V)}} = C (v_1 , v_2).$$
	\end{enumerate}
\end{theorem}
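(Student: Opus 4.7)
The plan is to leverage the Langlands--Harish-Chandra classification: $V$ appears as an irreducible constituent of some unitarily induced $\pi_\nu = \mathrm{Ind}_P^G(\sigma \otimes e^\nu)$, with $P = MN$ a parabolic subgroup, $\sigma$ a discrete series of $M$, and $\nu \in i\mathfrak{a}_M^*$ a unitary parameter on the split center $A_M \subset M$. I would set $\mathbf{d}(V) := \dim_{\bbR} \mathfrak{a}_M$, so that $\mathbf{d}(V) = 0$ exactly when $V$ is square-integrable. Using a $KAK$ decomposition together with the fact that $\mathbf{r}(k_1 a k_2)$ is comparable to $\|\log a\|$, the first step is to rewrite
\[
\int_{G_{<r}} c_{v_1,v_2}(g) \, \overline{c_{v_3,v_4}(g)} \, dg \; \asymp \; \int_{\{H \in \mathfrak{a}^+ \,:\, \|H\| < r\}} F_{v_1,\ldots,v_4}(H) \, \delta(e^H) \, dH,
\]
where $F$ is the $K \times K$-average of the product of matrix coefficients and $\delta$ is the Weyl integration density on $A^+$.

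Next, I would invoke the asymptotic expansion of $K$-finite matrix coefficients along the positive chamber---due to Harish-Chandra in the Archimedean case and to Casselman in the non-Archimedean case:
\[
c_{v_1,v_2}(e^H) = e^{-\rho(H)} \sum_{w} C_w(v_1,v_2;\nu) \, e^{(w\nu)(H)} \, P_w(H) + R(H),
\]
with $P_w$ polynomial factors (identically constant in the non-Archimedean case) and $R$ of strictly faster decay than $e^{-\rho(H)}$. Since $\delta(e^H) \asymp e^{2\rho(H)}$ on the regular part of $A^+$, the integrand $|c|^2 \delta$ becomes a finite sum of terms $e^{i(w\nu - w'\nu)(H)} Q_{w,w'}(H)$ plus a lower-order remainder. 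Integrating over the Euclidean ball of radius $r$ in $\mathfrak{a}^+$, the diagonal terms ($w\nu = w'\nu$) yield polynomial growth with leading order $r^{\mathbf{d}(V)}$ times an explicit sesquilinear form in the $v_i$, while the off-diagonal oscillatory terms contribute $O(r^{\mathbf{d}(V)-1})$ by integration by parts. Specialized to $v_3 = v_1$, $v_4 = v_2$, positivity of the diagonal integrand yields part~(2).

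For part~(1) (non-Archimedean), the $P_w$ being constant pins the leading order as a clean $r^{\mathbf{d}(V)}$ growth. I would identify the leading sesquilinear form as $\tfrac{1}{\mathbf{f}(V)}\langle v_1,v_3\rangle \overline{\langle v_2,v_4\rangle}$ by comparison with Harish-Chandra's Plancherel formula restricted to the Bernstein component containing $V$: Schur's lemma applied to the $G \times G$-action on $V \otimes \bar V$ forces the form up to a scalar $\mathbf{f}(V)^{-1}$, with $\mathbf{f}(V)$ computable as the formal degree of $\sigma$ times the Plancherel density at $\nu$, symmetrized over the Weyl stabilizer of $\nu$. In the Archimedean case the nontrivial $P_w$ together with possible $R$-group reducibility of $\pi_\nu$ obstruct such a clean identification, which is why one settles for the mere existence of the positive $C(v_1, v_2)$ in part~(2).

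The hard part will be the oscillatory estimate
\[
\int_{\{H \in \mathfrak{a}^+ \,:\, \|H\| < r\}} e^{i\mu(H)} P(H) \, dH \; = \; O(r^{\mathbf{d}(V)-1}) \qquad (\mu \in i\mathfrak{a}^* \setminus \{0\}),
\]
which requires integration by parts carefully adapted to both the spherical boundary and the chamber walls, typically via a smooth-cutoff approximation. A secondary difficulty in the non-Archimedean case is isolating the contribution of $V$ inside the possibly reducible $\pi_\nu$; I would handle this through the $R$-group, whose finite intertwining algebra decomposes $\pi_\nu$ into its irreducible summands, of which $V$ is one.
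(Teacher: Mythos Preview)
Your skeleton---reduce to $A^+$ via $KAK$, invoke the Casselman (resp.\ Harish-Chandra) asymptotic expansion, and separate oscillatory from non-oscillatory contributions---matches the paper's. But there is a genuine error: the identification $\mathbf{d}(V) = \dim_{\bbR}\mathfrak{a}_M$ is wrong, and the assertion that the polynomial factors $P_w$ are identically constant in the non-Archimedean case is false. Already for the spherical principal series $V_1$ of $PGL_2(\Omega)$ (so $\nu=0$), the Jacquet module along the Borel is a \emph{non-split} self-extension of the trivial character, and Macdonald's formula gives $e^{\rho}\,\Xi_G(\mathbf{a}(\varpi^{-n})) = c_0 + c_1 n$ with $c_1\neq 0$; hence $\int_{G_{<r}}\Xi_G^2 \asymp r^3$, not $r$. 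The same happens for $SL_2(\bbR)$. In general, whenever exponents in the expansion coalesce one picks up higher-degree polynomial factors and $\mathbf{d}(V)$ jumps above $\dim\mathfrak{a}_M$; your leading-order count then undercounts, and the limit you write down is $+\infty$ rather than a finite positive number.

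The paper avoids this by not guessing $\mathbf{d}(V)$ in advance. For each $K$-finite pair $v_1,v_2$ it reads off from the Casselman (resp.\ Casselman--Mili\v{c}i\'c) expansion a finite list of exponent--monomial pairs $(\lambda_i,q_i)$ with $\mathrm{Re}(\lambda_i)\le 0$ (temperedness via Casselman's criterion), and an elementary appendix lemma computes the exact polynomial growth exponent $\mathbf{d}(v_1,v_2)$ of $\int_{L^+_{<r}}\bigl|\sum_i c_i\, x^{q_i}e^{\lambda_i(x)}\bigr|^2\,dx$ (or its lattice analogue) together with a strictly positive limiting constant; then $\mathbf{d}(V):=\sup_{v_1,v_2}\mathbf{d}(v_1,v_2)$, finite because only finitely many Jacquet data occur. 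For part~(1) the paper does not touch Plancherel or $R$-groups: once~(2) is known, polarization gives existence of the quadrilinear limit $D(v_1,v_2,v_3,v_4)$, the sandwich $G_{<r-r_0}\subset g_2^{-1}G_{<r}g_1\subset G_{<r+r_0}$ forces $G\times G$-invariance of $D$, and Schur's lemma on the smooth module $\underline{V}$ yields $D=\tfrac{1}{\mathbf{f}(V)}\langle v_1,v_3\rangle\overline{\langle v_2,v_4\rangle}$ directly.
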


\begin{remark}
	We expect that it should not be very difficult to establish the statement of item $(1)$ of Theorem \ref{thm main Kfin} also in the Archimedean case, instead of the weaker statement of item $(2)$.
\end{remark}

Concentrating on the non-Archimedean case for simplicity, Theorem \ref{thm main Kfin} has as a corollary the following proposition, a generalization (from the square-integrable case to the tempered case) of a formula of Harish-Chandra (see \cite[Theorem 9]{Ha2}), expressing the character as a conjugation-average of a matrix coefficient.

\begin{definition}
	Assume that $G$ is non-Archimedean. We denote by $C^{\infty} (G)$ the space of (complex-valued) smooth functions on $G$ and by $D_c^{\infty} (G)$ the space of smooth distributions on $G$ with compact support. We denote by $C^{-\infty} (G)$ the dual to $D_c^{\infty} (G)$, i.e. the space of generalized functions on $G$ (thus we have an embedding $C^{\infty} (G) \subset C^{-\infty} (G)$). Given an admissible unitary $G$-representation $V$, we denote by $\Theta_V \in C^{-\infty} (G)$ the character of $V$.
\end{definition}

\begin{proposition}[see \S\ref{ssec proof of prop formula ch}]\label{prop formula ch}
	Let $V$ be a tempered irreducible unitary $G$-representation. Let $v_1 , v_2 \in V$ be smooth vectors. Denote by $m_{v_1 , v_2} \in C^{\infty}(G) \subset C^{-\infty} (G)$ the matrix coefficient $m_{v_1 , v_2} (g) := \langle g v_1 , v_2 \rangle$. Denoting $({}^g m) (x) := m(g^{-1} x g)$, the limit $$\lim_{r \to +\infty} \frac{\int_{G_{<r}} {}^g m_{v_1 , v_2} \cdot dg}{r^{\mathbf{d} (V)}}$$ exists in $C^{-\infty}(G)$, in the sense of weak convergence of generalized functions (i.e. convergence when paired against every element in $D_c^{\infty} (G)$), and is equal to $$ \frac{\langle v_1 , v_2 \rangle}{\mathbf{f} (V)} \cdot \Theta_V.$$
\end{proposition}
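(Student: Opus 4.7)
The plan is to pair both sides of the claimed identity against an arbitrary test distribution $\phi \in D_c^{\infty}(G)$ and reduce matters to a finite sum of applications of Theorem \ref{thm main Kfin}(1), using that $\pi(\phi)$ is a finite-rank operator by admissibility.

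First I would note that $G_{<r}$ is relatively compact: the adjoint representation of a semisimple group has finite kernel, so $\mathbf{r}$ is a proper function on $G$, and hence all integrals below are over compact sets and Fubini applies without issue. Rewriting $({}^g m_{v_1, v_2})(x) = \langle g^{-1} x g v_1, v_2\rangle = \langle x g v_1,\, g v_2 \rangle$ via unitarity of $\pi$, the pairing against $\phi$ unwinds to
$$\Big\langle \int_{G_{<r}} {}^g m_{v_1, v_2} \cdot dg,\; \phi \Big\rangle = \int_{G_{<r}} \langle \pi(\phi)\, g v_1,\, g v_2 \rangle \, dg.$$
Since $\phi$ is bi-invariant under some open compact subgroup $K' \subset G$, admissibility of $V$ implies that $\pi(\phi)$ vanishes on $(V^{K'})^{\perp}$ and has image in the finite-dimensional subspace $V^{K'}$. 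Fixing an orthonormal basis $e_1, \ldots, e_n$ of $V^{K'}$ and noting that $\pi(\phi)^{*} = \pi(\phi^{*})$ has the same structure, I would expand
$$\langle \pi(\phi)\, g v_1,\, g v_2 \rangle = \sum_{i=1}^n \langle g v_1,\, \pi(\phi)^{*} e_i \rangle \cdot \overline{\langle g v_2,\, e_i \rangle}.$$

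The vectors $v_1,\, \pi(\phi)^{*} e_i,\, v_2,\, e_i$ are all smooth and hence $K$-finite (smooth equals $K$-finite for admissible representations in the non-Archimedean case), so Theorem \ref{thm main Kfin}(1) applies term by term. Dividing by $r^{\mathbf{d}(V)}$ and passing to the limit — the interchange with a finite sum being trivial — the $i$-th term contributes $\frac{\langle v_1, v_2 \rangle}{\mathbf{f}(V)} \cdot \overline{\langle \pi(\phi)^{*} e_i, e_i \rangle} = \frac{\langle v_1, v_2 \rangle}{\mathbf{f}(V)} \langle \pi(\phi) e_i, e_i \rangle$. Summation produces $\frac{\langle v_1, v_2 \rangle}{\mathbf{f}(V)} \cdot \mathrm{tr}(\pi(\phi)) = \frac{\langle v_1, v_2 \rangle}{\mathbf{f}(V)} \cdot \Theta_V(\phi)$, as desired. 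I do not anticipate a genuine obstacle: admissibility converts what might otherwise be a delicate interchange-of-limits issue into pure bookkeeping, and all analytic content is already packaged into Theorem \ref{thm main Kfin}(1).
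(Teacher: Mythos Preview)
Your proof is correct and follows essentially the same route as the paper: pair against a test distribution bi-invariant under an open compact $K'$, use admissibility to reduce $\langle \pi(\phi)\,gv_1,\,gv_2\rangle$ to a finite sum of products of smooth matrix coefficients, and apply Theorem \ref{thm main Kfin}(1) term by term. The only cosmetic difference is that the paper projects both $gv_1$ and $gv_2$ onto $V^{K'}$ and obtains a double sum $\sum_{i,j}\langle gv_1,e_i\rangle\overline{\langle gv_2,e_j\rangle}\langle \pi(\phi)e_i,e_j\rangle$, whereas you absorb the operator into one slot via $\pi(\phi)^*e_i$ and get a single sum; both collapse to $\mathrm{tr}\,\pi(\phi)$ in the same way after applying the asymptotic orthogonality relation.
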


\subsection{}\label{ssec intro 1.5}

We are able to verify Conjecture \ref{main conj red exact} in some cases.

\begin{theorem}[see Theorem \ref{thm V1 c temp}]\label{thm intro slowest}
	Conjecture \ref{main conj red exact} is true for the principal series irreducible unitary representation of ``slowest decrease", i.e. the unitary parabolic induction of the trivial character via a minimal parabolic subgroup.
\end{theorem}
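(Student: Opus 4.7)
The plan is to use the compact picture of $V_1 := \textnormal{Ind}_P^G(\mathbf{1})$ as $L^2(K/M)$, where $P = MAN$ is a minimal parabolic. Extending $f \in L^2(K/M)$ to $G$ by $f(kan) = e^{-\rho H(a)} f(k)$, the matrix coefficients take the explicit Iwasawa form
\[
m_{v_1, v_2}(g) = \int_K e^{-\rho H(g^{-1}k)} v_1(\kappa(g^{-1}k)) \overline{v_2(k)} \, dk,
\]
and for $v_1 = v_2 = \mathbf{1}_K$ this is Harish-Chandra's $\Xi_G$. By Remark \ref{rem doesnt depend on norm} I may assume $\|-\|$ is $\textnormal{Ad}(K)$-invariant, so that $\mathbf{r}$ is bi-$K$-invariant and $G_{<r}$ decomposes as $K \cdot A^+_{<r} \cdot K$ with $A^+_{<r} \subset A^+$ an $r$-dilated polytope; the $KAK$ integration formula reads $dg = |W|^{-1} J(a)\, dk_1\, da\, dk_2$ with $J(\exp H) \sim c_J e^{2\rho(H)}$ as $H \to \infty$ inside $\fraka^+$.

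The next step is to evaluate the inner $K \times K$ integral
\[
\int_{K \times K} m_{v_1, v_2}(k_1 a k_2) \overline{m_{v_3, v_4}(k_1 a k_2)} \, dk_1 \, dk_2
\]
asymptotically as $a \to \infty$ in $A^+$. Decomposing $v_1, \ldots, v_4$ into $K$-types and applying Harish-Chandra's asymptotic expansion for matrix coefficients of the unitary principal series at $\lambda = 0$ (or alternatively, analyzing $\kappa(a^{-1}k)$ and $H(a^{-1}k)$ directly via the Bruhat decomposition of $K$, which separates the small cell $k \in M$, where the integrand has size $\sim a^{\rho}$, from the generic cell, where it has size $\sim a^{-\rho}$ against a non-uniform-in-$K$ density), the inner integral equals
\[
\langle v_1, v_3 \rangle \overline{\langle v_2, v_4 \rangle} \cdot P(H)^2 \cdot e^{-2\rho(H)} \cdot (1 + o(1)),
\]
where $P(H)$ is the polynomial of degree $d_0$ in Harish-Chandra's asymptotic $\Xi_G(\exp H) \sim c_\Xi \cdot P(H) \, e^{-\rho(H)}$ (for instance $d_0 = 1$ for $SL_2$, and in general $d_0 = |\Sigma^+|$, the number of positive roots). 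The pairing $\langle v_1, v_3 \rangle \overline{\langle v_2, v_4 \rangle}$ emerges because, after this analysis, the $k_1$ and $k_2$ averages of the rank-one operators $\pi(k_1^{-1}) |v_4\rangle\langle v_2| \pi(k_1)$ and $\pi(k_2) |v_1\rangle\langle v_3| \pi(k_2^{-1})$ concentrate, under pairing with $\pi(a)$, onto the normalized spherical vector of $V_1$.

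Multiplying by $J(a) \sim c_J e^{2\rho(H)}$ cancels the exponentials, and by elementary scaling of a homogeneous polynomial integrand over a dilated polytope, $\int_{A^+_{<r}} P(H)^2 \, dH \sim C \cdot r^{\dim \fraka + 2 d_0}$. This identifies $\mathbf{d}(V_1) = \dim \fraka + 2 d_0$ and pins down $\mathbf{f}(V_1)$ as an explicit constant in terms of $c_J$, $c_\Xi$, the leading coefficient of $P$, and the volume of $A^+_{<1}$. The main technical obstacle I anticipate is uniform control of the $o(1)$ errors across $A^+_{<r}$, particularly when $H$ is near the walls of $\fraka^+$; my intended fix is to split $A^+_{<r}$ into a deep-interior region, where Harish-Chandra's expansion (respectively, the Macdonald formula in the non-Archimedean case) provides sharp leading-order control, and a thin wall-neighborhood of relatively small volume, on which the crude bound $|m_{v_i, v_j}(g)|^2 \le \|v_i\|^2 \|v_j\|^2 \Xi_G(g)^2$ combined with a direct estimate of $\int_{\textnormal{wall}} \Xi_G^2 J$ shows the contribution is $o(r^{\mathbf{d}(V_1)})$.
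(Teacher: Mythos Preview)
Your approach is a direct asymptotic computation; the paper's is quite different, and the difference matters because there is a genuine gap in your plan.

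The conjecture concerns \emph{all} vectors $v_1,\ldots,v_4\in V_1$, not only $K$-finite ones. Your argument rests on ``decomposing $v_1,\ldots,v_4$ into $K$-types and applying Harish-Chandra's asymptotic expansion'', but that expansion is available only for $K$-finite vectors. For general $v_i$ you would have to sum over infinitely many $K$-types and justify interchanging the sum with the limit $r\to\infty$; nothing in your outline controls this. In fact, after the $K\times K$ average (with, say, $K$ abelian for simplicity) one gets
\[
\int_{K\times K} m_{v_1,v_2}\,\overline{m_{v_3,v_4}}(k_1ak_2)\,dk_1dk_2
=\sum_{\theta,\eta} c_1^{\theta}\overline{c_3^{\theta}}\,\overline{c_2^{\eta}}c_4^{\eta}\,
\big|\langle a\,h_\theta,h_\eta\rangle\big|^2,
\]
so for your claimed asymptotic $\langle v_1,v_3\rangle\overline{\langle v_2,v_4\rangle}\,P(H)^2e^{-2\rho(H)}(1+o(1))$ to hold you would need $|\langle a\,h_\theta,h_\eta\rangle|^2\sim P(H)^2e^{-2\rho(H)}$ with the \emph{same} leading constant for every pair $(\theta,\eta)$, and with enough uniformity in $(\theta,\eta)$ to sum. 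You do not argue either point, and the first is already false in general (different $K$-types carry different constants in the Harish-Chandra expansion at $\lambda=0$). Your ``intended fix'' about wall regions addresses spatial nonuniformity in $H$, not this $K$-type nonuniformity.

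By contrast, the paper never attempts a pointwise asymptotic. Its key input is the operator-norm inequality (Claim~\ref{clm est using Xi}, via Claim~\ref{clm norm of conv} from \cite{ChPiSa}): for \emph{arbitrary} unit vectors in any tempered representation and any $K$-biinvariant set $S$,
\[
\int_S |\langle gv_1,v_2\rangle|^2\,dg \le \int_S \Xi_G(g)^2\,dg.
\]
Taking $v_0=f_0$ (the spherical vector, so that $M_{v_0,v_0}(r)=\int_{G_{<r}}\Xi_G^2$) this immediately yields the two F{\o}lner-type conditions of Proposition~\ref{prop from folner to exact}, using only the polynomial growth of $\int_{G_{<r}}\Xi_G^2$ already supplied by Theorem~\ref{thm main Kfin}. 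Proposition~\ref{prop from folner to exact} then produces the Schur orthogonality for all vectors by a soft argument (Banach--Alaoglu plus Schur's lemma), bypassing any need for uniform asymptotics across $K$-types. What your direct computation would buy, if it worked, is an explicit value of $\mathbf{d}(V_1)$ and $\mathbf{f}(V_1)$ without invoking Proposition~\ref{prop from folner to exact}; but as written it establishes at best a variant of Theorem~\ref{thm main Kfin} for $V_1$, not the full conjecture.
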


Here is the main result of the paper:

\begin{theorem}[see \S\ref{sec proof of SL2R}]\label{thm intro SL2R}
	Conjecture \ref{main conj red exact} is true for all tempered irreducible unitary representations of $G := SL_2 (\bbR)$ and of $G := PGL_2 (\Omega)$, where $\Omega$ is a non-Archimedean field of characteristic $0$ and of residual characteristic not equal to $2$.
\end{theorem}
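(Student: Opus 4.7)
Both $G = SL_2(\bbR)$ and $G = PGL_2(\Omega)$ are semisimple of real rank one. Their tempered irreducible unitary duals decompose as: (a) the square-integrable representations --- discrete series of weight $\ge 2$ for $SL_2(\bbR)$, and supercuspidals together with the Steinberg for $PGL_2(\Omega)$ --- for which Conjecture \ref{main conj red exact} holds with $\mathbf{d}(V) = 0$ and $\mathbf{f}(V)$ the formal degree, by the classical Schur orthogonality relations; and (b) the tempered non-square-integrable representations, each appearing as an irreducible direct summand of a unitary principal series $I(\chi) := \textnormal{Ind}_P^G \chi$ for some unitary character $\chi$ of a minimal parabolic $P = MAN$. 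Orthogonality on a reducible $I(\chi)$ restricts to orthogonality on each summand (upon choosing the vectors $v_i$ inside that summand), so it suffices to prove Conjecture \ref{main conj red exact} for $I(\chi)$ itself; the case $\chi = 1$ is Theorem \ref{thm intro slowest}, so we may assume $\chi$ non-trivial.

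Fix such a $V = I(\chi)$. In the non-Archimedean case, Theorem \ref{thm main Kfin}(1) already supplies the full quadrilinear statement of Conjecture \ref{main conj red exact} on the dense subspace of $K$-finite (i.e.\ smooth) vectors of $V$, together with the invariants $\mathbf{d}(V), \mathbf{f}(V)$; only the extension to arbitrary vectors remains. In the Archimedean case, Theorem \ref{thm main Kfin}(2) supplies only the diagonal polarization $(v_1, v_2) = (v_3, v_4)$ on $K$-finite vectors, and one must first upgrade it to the full quadrilinear statement before extending. The upgrade relies on the Cartan decomposition $G = K \overline{A^+} K$ with Jacobian $\Delta(a) \sim a^{2\rho}$ and on the Harish-Chandra asymptotic expansion
\begin{equation*}
\langle a v, v' \rangle = \sum_{w \in W} a^{w\chi - \rho} \cdot C_w(v, v') + \textnormal{(strictly smaller)}
\end{equation*}
for $K$-finite $v, v' \in V$ and $a \in A^+$, with coefficients $C_w$ bilinear in $(v, v')$ given by Knapp--Stein intertwiners and $W$ the two-element Weyl group. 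Substituting a product of two such expansions into the matrix-coefficient integral and multiplying by $\Delta(a)$ cancels the $a^{-\rho}$ factors, leaving a finite sum of unitary phases $a^{w\chi - w'\overline{\chi}}$ on $A^+$; diagonal pairs $w = w'$ contribute non-oscillating terms whose integrals over $\{a \in A^+ : \mathbf{r}(a) < r\}$ grow polynomially in $r$ of degree $\mathbf{d}(V)$, while off-diagonal pairs oscillate and contribute only $O(1)$. Self-Weyl-dual $\chi$ (such as non-trivial quadratic characters in the $p$-adic setting, whose classification the residual-characteristic hypothesis controls) are handled by the same Abelian equidistribution mechanism as in the proof of Theorem \ref{thm intro slowest}, producing a larger $\mathbf{d}(V)$ with an additional logarithmic weight.

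The principal remaining obstacle is the extension from $K$-finite to arbitrary vectors of $V$. The key ingredient is a uniform a priori bound
\begin{equation*}
\int_{G_{<r}} |\langle g v_1, v_2 \rangle|^2 \, dg \leq C \cdot r^{\mathbf{d}(V)} \cdot \|v_1\|^2 \|v_2\|^2,
\end{equation*}
valid for all $v_1, v_2 \in V$ with a constant $C$ independent of the vectors and of $r$; granted this, a Cauchy--Schwarz plus density argument pushes the limit from $K$-finite vectors (where it is known by the previous paragraph) to arbitrary ones. The uniform bound itself is proved by decomposing $v_1, v_2$ along $K$-types and applying the asymptotic expansion $K$-type by $K$-type, then summing via Bessel's inequality across $K$-types. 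The delicate step, which is the heart of the technical work, is to control the implied constants uniformly across $K$-types: in the $p$-adic case this reduces to finitely many $K$-types after smoothing and is essentially combinatorial, while in the Archimedean case uniformity must be extracted from the Casimir action on $V$, together with the explicit $K$-type structure of the principal series (which is multiplicity-one for $G = SL_2(\bbR)$), yielding polynomial dependence on the $K$-type parameter that is absorbed into $C$.
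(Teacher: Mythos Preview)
Your overall architecture matches the paper's: reduce to irreducible summands of $V_\chi$, treat the square-integrable case trivially, treat $\chi$ with $\chi^2 = 1$ via Theorem \ref{thm intro slowest}, and for the remaining $\chi$ establish the uniform a priori bound
\[
\int_{G_{<r}} |\langle g v_1 , v_2 \rangle|^2 \, dg \leq C \cdot r \cdot \|v_1\|^2 \|v_2\|^2
\]
together with a matching lower bound on some fixed vector, then feed both into Proposition \ref{prop from folner to exact}. The paper also reduces the uniform bound, exactly as you suggest, to a pointwise estimate on the $K$-type matrix coefficients $\langle a h^\chi_\theta , h^\chi_\eta \rangle$ uniform in $\theta,\eta$ (its Claim 7.3), and then sums via Parseval on $K\times K$.

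The gap is in your account of \emph{how} that uniform $K$-type estimate is obtained. In the Archimedean case you write that the Casimir action yields ``polynomial dependence on the $K$-type parameter that is absorbed into $C$''; but polynomial growth in the $K$-type index is fatal, since for an arbitrary (non-smooth) $v_1$ the $K$-type coefficients decay only in $\ell^2$, and $\sum_n |c_n|^2 \cdot p(n)$ diverges. One genuinely needs the bound $|\langle a h^\chi_{\theta_n} , h^\chi_{\theta_m}\rangle| \leq C\, \omega(a)^{-1/2}$ with $C$ \emph{independent} of $n,m$. The paper imports this for generic $\chi$ from the recent hard estimate of Bruno--Cowling--Nicola--Tabacco, and for the limit-of-discrete-series point ($\chi|_{A^+}=1$) gives a separate argument via Jacobi polynomials, identifying the relevant quantity with a matrix coefficient of $\mathrm{SU}(2)$ and hence bounded by $1$. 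Likewise in the $p$-adic case your ``reduces to finitely many $K$-types after smoothing'' only yields the bound on smooth vectors, which is exactly what you are trying to escape; the paper instead proves the uniform estimate over \emph{all} characters $\theta,\eta$ of the compact torus $K$ by an explicit oscillatory-integral analysis (its Proposition 7.5), controlling the integral in terms of the conductor of $\chi^2$ --- this is where the residual-characteristic-$\neq 2$ hypothesis actually enters, via the $p$-adic exponential on $K$. The asymptotic expansion you invoke is used in the paper only for the \emph{lower} bound (Claim 7.1); it does not give the uniform upper bound, because the error terms in the expansion are not controlled uniformly in the $K$-type.
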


\subsection{}\label{ssec intro 2}

The proposition that follows shows that a seemingly weaker property implies that of Conjecture \ref{main conj red exact}.

\begin{definition}
	Given a unitary $G$-representation $V$ and vectors $v_1 , v_2 \in V$ we define $$ \Ms{v_1}{v_2}{r} := \int_{ G_{<r}} |\langle g v_1 , v_2 \rangle |^2 \cdot dg.$$
\end{definition}

\begin{proposition}[see \S\ref{ssec clms red 1}]\label{prop from folner to exact}

	Let $V$ be an irreducible unitary $G$-representation. Let $v_0 \in V$ be a unit vector such that the following holds:
	
	\begin{enumerate}
		\item For any vectors $v_1 , v_2 \in V$ we have $$ \underset{r \to +\infty}{\limsup} \frac{ \Ms{v_1}{v_2}{r} }{ \Ms{v_0}{v_0}{r} } < +\infty.$$
		\item For any vectors $v_1 , v_2 \in V$ and $r^{\prime} > 0$ we have $$ \lim_{r \to +\infty} \frac{ \Ms{v_1}{v_2}{r + r^{\prime}} - \Ms{v_1}{v_2}{r - r^{\prime}} }{ \Ms{v_0}{v_0}{r} } = 0.$$
	\end{enumerate}
	
	Then Conjecture \ref{main conj red exact} holds for $V$.
	
\end{proposition}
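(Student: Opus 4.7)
The plan is to view
$$ B_r(v_1,v_2,v_3,v_4) := \int_{G_{<r}} \langle g v_1, v_2\rangle \overline{\langle g v_3, v_4\rangle}\, dg $$
as a family of bisesquilinear forms on $V^{\times 4}$, divide by $\Ms{v_0}{v_0}{r}$, and pin down the limit via Schur's lemma; the conjecture's $r^{\mathbf{d}(V)}$-normalization then follows by declaring $\mathbf{d}(V)$ and $\mathbf{f}(V)$ so that $\Ms{v_0}{v_0}{r} \sim r^{\mathbf{d}(V)}/\mathbf{f}(V)$.

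The first step is to extract an asymptotic $G$-invariance from condition (2). Submultiplicativity of the operator norm yields $\mathbf{r}(g_0 h) \le \mathbf{r}(g_0) + \mathbf{r}(h)$, hence
$$ G_{<r - \mathbf{r}(g_0)}\ \subseteq\ g_0^{-1} G_{<r}\ \subseteq\ G_{<r + \mathbf{r}(g_0)}, $$
and the same sandwich for $G_{<r} g_0^{-1}$. Substituting $g = g_0 h$ (respectively $g = h g_0$) in $B_r$, using unimodularity, and applying Cauchy--Schwarz on the annular symmetric difference, condition (2) gives
$$ B_r(v_1, g_0 v_2, v_3, g_0 v_4) - B_r(v_1, v_2, v_3, v_4) = o(\Ms{v_0}{v_0}{r}),$$
$$ B_r(g_0 v_1, v_2, g_0 v_3, v_4) - B_r(v_1, v_2, v_3, v_4) = o(\Ms{v_0}{v_0}{r}).$$

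Next, set $b_r := B_r/\Ms{v_0}{v_0}{r}$. Condition (1) and Cauchy--Schwarz make $|b_r(v_1,v_2,v_3,v_4)|$ bounded in $r$ for each fixed quadruple, and an iterated Banach--Steinhaus argument upgrades this to a uniform bound $|b_r(v_1,v_2,v_3,v_4)| \le C \|v_1\|\|v_2\|\|v_3\|\|v_4\|$. Any pointwise subsequential limit $b^{\ast}$ (produced by Cantor diagonalization on a countable dense subset, then polarization) is therefore a bounded bisesquilinear form on $V^{\times 4}$, fully $G$-invariant in the pairs $(v_1, v_3)$ and in $(v_2, v_4)$ by the previous step. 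Fixing $v_2, v_4$, the form $(v_1, v_3) \mapsto b^{\ast}(v_1, v_2, v_3, v_4)$ corresponds under Riesz to a bounded operator on $V$ commuting with $G$, and Schur's lemma gives $b^{\ast}(v_1, v_2, v_3, v_4) = \lambda(v_2, v_4) \langle v_1, v_3\rangle$. Varying $(v_2, v_4)$, the scalar $\lambda$ is itself a bounded $G$-invariant sesquilinear form, so $\lambda(v_2, v_4) = \mu \overline{\langle v_2, v_4\rangle}$. Evaluation at $v_i = v_0$ forces $\mu = 1$, so every subsequential limit equals $\langle v_1, v_3\rangle \overline{\langle v_2, v_4\rangle}$ and the full limit $\lim_r b_r$ exists with that value.

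The main obstacle is the uniform boundedness of $b_r$: this is what ensures $b^{\ast}$ is a genuinely bounded form whose associated intertwiners on $V$ are continuous, so that Schur's lemma applies. The iterated Banach--Steinhaus argument rests crucially on condition (1) in its pointwise form; without it, one could pass to the limit only on a dense subspace and the resulting operators might fail to be bounded, blocking the two Schur applications at the heart of the proof.
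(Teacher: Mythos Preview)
Your argument for the limit
\[
\lim_{r\to+\infty}\frac{B_r(v_1,v_2,v_3,v_4)}{\Ms{v_0}{v_0}{r}}=\langle v_1,v_3\rangle\,\overline{\langle v_2,v_4\rangle}
\]
is correct and is essentially the paper's own argument (the paper packages the annulus estimate as Lemma~\ref{lem red temp is ctemp}, feeding into the general asymptotic Schur orthogonality Proposition~\ref{prop c-tempered orth rel}; your direct treatment with iterated Banach--Steinhaus and subsequential weak-$*$ limits is the same mechanism, just not routed through the abstract c-temperedness language).

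The gap is the last sentence of your plan. You cannot simply ``declare'' $\mathbf{d}(V)\in\bbZ_{\ge 0}$ and $\mathbf{f}(V)\in\bbR_{>0}$ so that $\Ms{v_0}{v_0}{r}\sim r^{\mathbf{d}(V)}/\mathbf{f}(V)$: Conjecture~\ref{main conj red exact} demands precisely this integer-power asymptotic, and nothing in hypotheses (1)--(2) forces it. Condition~(2) applied to $v_1=v_2=v_0$ only gives a subexponential-type growth condition on $\Ms{v_0}{v_0}{r}$; growth like $r\log r$ or $e^{\sqrt r}$ is perfectly compatible with (1) and (2). The paper closes this gap by an external, highly nontrivial input: it replaces $v_0$ by a $K$-finite unit vector $v_0'$ (harmless, since your limit already gives $\Ms{v_0'}{v_0'}{r}/\Ms{v_0}{v_0}{r}\to 1$) and then invokes Theorem~\ref{thm main Kfin}, which uses the asymptotic expansion of $K$-finite matrix coefficients (Casselman, Casselman--Mili\v{c}i\'{c}) to prove $\Ms{v_0'}{v_0'}{r}/r^{\mathbf{d}(V)}\to C>0$ for some $\mathbf{d}(V)\in\bbZ_{\ge 0}$. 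Note also that Theorem~\ref{thm main Kfin} is stated for tempered $V$, so one implicitly needs that (1)--(2) force temperedness; this is handled by the paper's Claim~\ref{clm Prop tempered is c-tempered reductive holds}. Without these two inputs, you have proved only the orthogonality relation with denominator $\Ms{v_0}{v_0}{r}$, not the statement of Conjecture~\ref{main conj red exact}.
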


\begin{question}
	Does item $(1)$ of Proposition \ref{prop from folner to exact} hold for arbitrary irreducible unitary $G$-representations?
\end{question}

\begin{remark}[see Proposition \ref{clm Prop tempered is c-tempered reductive holds}]\label{rem ctemp red is temp}
	An irreducible unitary $G$-representation for which there exists a unit vector $v_0 \in V$ such that conditions $(1)$ and $(2)$ of Proposition \ref{prop from folner to exact} are satisfied is tempered.
\end{remark}

\subsection{}\label{ssec intro 2.5}

After finishing writing the current paper, we have found previous works \cite{Mi} and \cite{An}. Work \cite{Mi} intends at giving an asymptotic Schur orthogonality relation for tempered irreducible unitary representations, but we could not understand its validity; on the first page the author defines a seminorm $||-||_p^2$ on $C^{\infty} (G)$ by a limit, but this limit clearly does not always exist. Work \cite{An} (which deals with the more general setup of a symmetric space) provides an asymptotic Schur orthogonality relation for $K$-finite vectors in a tempered irreducible unitary $G$-representation, in the case when $G$ is real and under a regularity assumption on the central character. This work also seems to provide an interpretation of what we have denoted as $\mathbf{f} (V)$ in terms of the Plancherel density (but it would be probably good to work this out in more detail).

\subsection{}\label{ssec intro 3}

Let now $G$ be an arbitrary unimodular second countable locally compact group. We formulate a property of irreducible unitary $G$-representations which we call c-temperedness (see Definition \ref{def c-temp}). The property of c-temperedness is, roughly speaking, an abstract version of properties (1) and (2) of Proposition \ref{prop from folner to exact}. Here $G_{< r} \subset G$ are replaced by a sequence $\{ F_n \}_{n \ge 0}$ of subsets of $G$, which we call a F{\o}lner sequence, whose existence is part of the definition (so that we speak of a representation c-tempered with F{\o}lner sequence $\{ F_n \}_{n \ge 0}$), while the condition replacing property (2) of Proposition \ref{prop from folner to exact} generalizes, in some sense, the F{\o}lner condition for a group to be amenable (i.e. for the trivial representation to be tempered).

\medskip

We show in Corollary \ref{cor c-temp are temp} that any c-tempered irreducible unitary $G$-representation is tempered and pose the question:

\begin{question}\label{main question}
	For which groups $G$ every tempered irreducible unitary $G$-representation is c-tempered with some F{\o}lner sequence?
\end{question}

As before, c-tempered irreducible unitary $G$-representations enjoy a variant of asymptotic Schur orthogonality relations (see Proposition \ref{prop c-tempered orth rel}): \begin{equation}\label{eq orth rel} \lim_{n \to +\infty} \frac{\int_{F_n} \langle g v_1 , v_3 \rangle \overline{\langle gv_2 , v_4 \rangle} \cdot dg }{\int_{F_n} |\langle g v_0 , v_0 \rangle |^2 \cdot dg} = \langle v_1 , v_2 \rangle \overline{\langle v_3 , v_4 \rangle}\end{equation} for all $v_1 , v_2 , v_3 , v_4 \in V$ and all unit vectors $v_0 \in V$. Also, we have a variant for a pair of non-isomorphic representations (see Proposition \ref{prop c-tempered orth rel cross}).

\begin{definition}
	Let us say that two irreducible unitary $G$-representations are twins if their closures in $\hat{G}$ (w.r.t. the Fell topology) coincide.
\end{definition}

\begin{question}
	Let $V_1$ and $V_2$ be irreducible unitary $G$-representations and assume that $V_1$ and $V_2$ are twins. Suppose that $V_1$ is c-tempered with F{\o}lner sequence $\{ F_n \}_{n \ge 0}$.
	\begin{enumerate}
		\item Is it true that $V_2$ is also c-tempered with F{\o}lner sequence $\{ F_n \}_{n \ge 0}$?
		\item If so, is it true that for unit vectors $v_1 \in V_1$ and $v_2 \in V_2$ we have $$ \lim_{n \to +\infty} \frac{\int_{F_n} |\langle g v_1 , v_1 \rangle |^2 \cdot dg}{\int_{F_n} |\langle g v_2 , v_2 \rangle|^2 \cdot dg} = 1?$$
	\end{enumerate}
\end{question}

\subsection{}\label{ssec intro 4}

For many groups there exist tempered representations with the slowest rate of decrease of matrix coefficients. For such representations it is often much easier to prove analogs of c-temperedness or of orthogonality relation (\ref{eq orth rel}) than for other representations - as exemplified by Theorem \ref{thm intro slowest} above. See \cite{BoGa} for hyperbolic groups. 

\subsection{} Alexander Yom Din's research was supported by the ISRAEL SCIENCE FOUNDATION (grant No 1071/20). David Kazhdan's research was partially supported by ERC grant No 669655. We would like to thank Pavel Etingof for great help with the proof of Claim \ref{clm SL2 3} in the case $G = SL_2 (\bbR)$, which was present in a prior draft of the paper, before we encountered the work \cite{BrCoNiTa}. We would like to thank Vincent Lafforgue for a very useful discussion. We thank Erez Lapid for useful correspondence.

\subsection{}\label{ssec notation} Throughout the paper, $G$ is a unimodular second countable locally compact group. We fix a Haar measure $dg$ on $G$, as well as Haar measures on the other unimodular groups we encounter ($dk$ on the group $K$, etc.). We denote by $\textnormal{vol}_G (-)$ the volume with respect to $dg$.

\medskip

All unitary $G$-representations are on separable Hilbert spaces. 

\medskip

Given a unitary $G$-representation $V$, vectors $v_1 , v_2 \in V$ and a measurable subset $F \subset G$, we denote $$ \Mss{v_1}{v_2}{F} := \int_{F} |\langle gv_1 , v_2 \rangle |^2 \cdot dg.$$ So in the case of a semisimple group over a local field as above, we have set $$ \Ms{v_1}{v_2}{r} := \Mss{v_1}{v_2}{G_{<r}}.$$

\medskip

We write $L^2 (G) := L^2 (G , dg)$, considered as a unitary $G$-representation via the right regular action.

\medskip

Given Hilbert spaces $V$ and $W$, we denote by $\calB (V; W)$ the space of bounded linear operators from $V$ to $W$, and write $\calB (V) := \calB (V; V)$.

\medskip

We write $F_1 \smallsetminus F_2$ for set differences and $F_1 \triangle F_2 := (F_1\smallsetminus F_2) \cup (F_2 \smallsetminus F_1)$ for symmetric set differences.

\section{Notion of c-temperedness}\label{sec bi tempered}

In this section, let $G$ be a unimodular second countable locally compact group. We introduce the notion of a c-tempered (with a given F{\o}lner sequence) irreducible unitary $G$-representation.

\subsection{}

The following definition aims at a generalization of the hypotheses of Proposition \ref{prop from folner to exact}, so as to make them suitable for a general group.

\begin{definition}\label{def c-temp}
	Let $V$ be an irreducible unitary $G$-representation. Let $F_0 , F_1 , \ldots \subset G$ be a sequence of measurable pre-compact subsets all containing a neighbourhood of $1$. We say that $V$ is \textbf{c-tempered\footnote{``c" stands for ``matrix coefficients".} with F{\o}lner sequence $F_0 , F_1 , \ldots$} if there exists a unit vector $v_0 \in V$ such that the following two conditions are satisfied:
	\begin{enumerate}
		\item For all $v_1 , v_2 \in V$ we have\footnote{The notation $\Mss{-}{-}{-}$ is introduced in \S\ref{ssec notation}.} $$ \limsup_{n \to +\infty} \frac{ \Mss{v_1}{v_2}{F_n} }{ \Mss{v_0}{v_0}{F_n} } < +\infty.$$
		\item For all $v_1 , v_2 \in V$ and all compact subsets $K \subset G$ we have $$ \lim_{n \to +\infty} \frac{\sup_{g_1 , g_2 \in K} \Mss{v_1}{v_2}{F_n \triangle g_2^{-1} F_n g_1} }{ \Mss{v_0}{v_0}{F_n} } = 0.$$
	\end{enumerate}
\end{definition}

\begin{example}
	The trivial unitary $G$-representation is c-tempered with F{\o}lner sequence $F_0 , F_1 , \ldots$ if for any compact $K \subset G$ we have \begin{equation}\label{eq folner} \lim_{n \to +\infty} \sup_{g_1 , g_2 \in K} \frac{\vol_G (F_n \triangle g_2^{-1} F_n g_1)}{\vol_G (F_n)} = 0.\end{equation} By \textbf{F{\o}lner's condition}, the existence of such a sequence is equivalent\footnote{When stating F{\o}lner's condition for the amenability of $G$ it is more usual to consider $g_2^{-1} F_n$ rather than $g_2^{-1} F_n g_1$ in (\ref{eq folner}), i.e. to shift only on one side. However, using, for example, \cite[Theorem 4.1]{Gr} applied to the action of $G \times G$ on $G$, we see that the above stronger ``two-sided" condition also characterizes amenability.} to the trivial irreducible unitary $G$-representation being tempered, i.e. to $G$ being \textbf{amenable}.
\end{example}

\subsection{} Irreducible unitary $G$-representations which are c-tempered satisfy ``asymptotic Schur orthogonality relations":

\begin{proposition}\label{prop c-tempered orth rel}
	Let $V$ be an irreducible unitary $G$-representation. Assume that $V$ is c-tempered with F{\o}lner sequence $F_0 , F_1 , \ldots$ and let $v_0 \in V$ be a unit vector for which the conditions (1) and (2) of Definition \ref{def c-temp} are satisfied. Then for all $v_1 , v_2 , v_3 , v_4 \in V$ we have \begin{equation}\label{eq approx schur bi} \lim_{n \to +\infty} \frac{\int_{F_n} \langle g v_1 , v_2 \rangle \overline{\langle g v_3 , v_4 \rangle} \cdot dg}{ \Mss{v_0}{v_0}{F_n} } = \langle v_1 , v_3 \rangle \overline{\langle v_2 , v_4 \rangle}.\end{equation}
\end{proposition}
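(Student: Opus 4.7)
The plan is to recast the quantities
$T_n(v_1,v_2;v_3,v_4):=\frac{1}{\Mss{v_0}{v_0}{F_n}}\int_{F_n}\langle gv_1,v_2\rangle\overline{\langle gv_3,v_4\rangle}\,dg$
as weak matrix coefficients of two families of bounded operators on $V$, apply Schur's lemma to their weak subsequential limits, and identify the resulting scalars by comparing the two families on evaluations at $v_0$.

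First, viewing the integral as the $L^2(F_n)$-pairing of the truncated matrix coefficients $\mathbf{1}_{F_n}m_{v_1,v_2}$ and $\mathbf{1}_{F_n}m_{v_3,v_4}$, Cauchy-Schwarz combined with condition $(1)$ of Definition \ref{def c-temp} gives a bound $|T_n(v_1,v_2;v_3,v_4)|\le C(v_1,v_2,v_3,v_4)$ uniform in $n$. For fixed $(v_2,v_4)$ one defines $S_n(v_2,v_4)\in\calB(V)$ by $\langle S_n(v_2,v_4)v_1,v_3\rangle=T_n(v_1,v_2;v_3,v_4)$ and, symmetrically, $R_n(v_1,v_3)\in\calB(V)$ by $\langle R_n(v_1,v_3)v_4,v_2\rangle=T_n(v_1,v_2;v_3,v_4)$. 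The uniform boundedness principle then makes both operator families norm-bounded in $n$.

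The substitution $h\mapsto hg$ in the defining integral shows that $\langle \pi(g)^{-1}S_n(v_2,v_4)\pi(g)v_1,v_3\rangle-\langle S_n(v_2,v_4)v_1,v_3\rangle$ equals the normalized integral of $\langle hv_1,v_2\rangle\overline{\langle hv_3,v_4\rangle}$ over $F_n\triangle F_ng$, which by Cauchy-Schwarz and condition $(2)$ of Definition \ref{def c-temp} (with $g_2=1$, $g_1=g$) tends to $0$; the substitution $h\mapsto gh$ gives the analogous statement for $R_n(v_1,v_3)$ (with $g_1=1$, $g_2=g$). By separability of $V$ and weak-operator sequential compactness of norm-bounded subsets of $\calB(V)$, any subsequence of $\bbN$ admits a further subsequence $\{n_k\}$ along which the four operator sequences $S_{n_k}(v_2,v_4)$, $R_{n_k}(v_1,v_3)$, $S_{n_k}(v_0,v_0)$, $R_{n_k}(v_0,v_0)$ all converge weakly. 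Each limit commutes with $\pi(g)$ for every $g\in G$ and hence, by Schur's lemma, equals a scalar multiple of $\Id_V$; denote the four scalars $c_1,d_1,c_2,d_2$ respectively.

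It remains to identify the scalars and conclude. The normalization $T_n(v_0,v_0;v_0,v_0)\equiv 1$ forces $c_2=d_2=1$. Evaluating $\lim_k T_{n_k}(v_0,v_2;v_0,v_4)$ once via $S_{n_k}(v_2,v_4)$ gives $c_1$, while evaluating it via $R_{n_k}(v_0,v_0)$ gives $d_2\cdot\overline{\langle v_2,v_4\rangle}=\overline{\langle v_2,v_4\rangle}$, whence $c_1=\overline{\langle v_2,v_4\rangle}$; symmetrically one obtains $d_1=\langle v_1,v_3\rangle$. Therefore along $\{n_k\}$, $T_{n_k}(v_1,v_2;v_3,v_4)=\langle S_{n_k}(v_2,v_4)v_1,v_3\rangle$ converges to $c_1\langle v_1,v_3\rangle=\langle v_1,v_3\rangle\overline{\langle v_2,v_4\rangle}$, and since every subsequence contains such a further subsequence with this common limit, the full sequence converges to it. The main obstacle is the bookkeeping of this \emph{double} Schur argument: the family $S_n$ alone pins the limit down only as $c_1\langle v_1,v_3\rangle$ for an undetermined scalar $c_1$, so the auxiliary family $R_n$ together with the cross-comparison at $v_0$ is essential to identify both scalars simultaneously.
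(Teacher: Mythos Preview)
Your argument is correct and follows essentially the same route as the paper's proof: joint boundedness via condition~(1) and Banach--Steinhaus, asymptotic $G$-invariance via condition~(2), a subsequential weak limit, Schur's lemma, and identification of the scalars using the normalization at~$v_0$. The only difference is packaging: the paper works with the full quadlinear form $\Phi_n(v_1,v_2,v_3,v_4)$ and extracts a single weak-$*$ limit $\Phi$ (invoking sequential Banach--Alaoglu on the space of bounded quadlinear forms), whereas you freeze two of the four slots at a time to obtain the operator families $S_n(v_2,v_4)$ and $R_n(v_1,v_3)$ and use weak-operator compactness. One minor slip: the difference $\langle \pi(g)^{-1}S_n\pi(g)v_1,v_3\rangle-\langle S_n v_1,v_3\rangle$ is not literally \emph{equal} to the normalized integral over $F_n\triangle F_ng$ (it is $\int_{F_ng}-\int_{F_n}$, a signed combination), but its absolute value is bounded by the integral of the absolute value over $F_n\triangle F_ng$, which is what you then estimate---so the argument goes through unchanged.
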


\begin{proof}
	
	First, notice that in order to show that the limit in (\ref{eq approx schur bi}) holds, it is enough to show that for every sub-sequence there exists a further sub-sequence of it on which the limit holds. Replacing our sequence by the sub-sequence, it is therefore enough to show simply that there exists a sub-sequence on which the limit holds - which is what we will do.
	
	\medskip
	
	Define bilinear maps\footnote{Recall that $L^2 (G)$ denotes $L^2 (G , dg)$, viewed as a unitary $G$-representation via the right regular action.} $$ S_0 , S_1 , \ldots : V \times \overline{V} \to L^2 (G)$$ by $$ S_n (v_1 , v_2) (g) := \begin{cases} \frac{1}{\sqrt{ \Mss{v_0}{v_0}{F_n} }} \cdot \langle g v_1 , v_2 \rangle, & g \in F_n \\ 0, & g \notin F_n \end{cases}.$$ Clearly those are bounded.
	
	\medskip
	
	\begin{itemize}
		\item The bilinear maps $S_n$ are jointly bounded, i.e. there exists $C>0$ such that $||S_n||^2 \leq C$ for all $n$.
	\end{itemize}
	
	Indeed, by condition (1) of Definition \ref{def c-temp}, for any fixed $v_1 , v_2 \in V$ there exists $C>0$ such that $||S_n (v_1 , v_2)||^2 \leq C$ for all $n$. By the Banach-Steinhaus theorem, there exists $C>0$ such that $||S_n ||^2 \leq C$ for all $n$.
	
	\medskip

	Next, define quadlinear forms $$ \Phi_1 , \Phi_2 , \ldots : V \times \overline{V} \times \overline{V} \times V \to \bbC$$ by $$ \Phi_n (v_1 , v_2 , v_3 , v_4) := \langle S_n (v_1 , v_2) , S_n (v_3 , v_4) \rangle$$
	
	\medskip
	
	\begin{itemize}
		\item The quadlinear forms $\Phi_n$ are jointly bounded, in fact $|| \Phi_n || \leq C$ for all $n$.
	\end{itemize}
	
	This follows immediately from the above finding $|| S_n ||^2 \leq C$ for all $n$.
	
	\medskip

	\begin{itemize}
		\item For all $g_1 , g_2 \in G$ and $v_1 , v_2 , v_3 , v_4 \in V$ we have \begin{equation}\label{eq approx GG-inv} \lim_{n \to +\infty } \left( \Phi_n (g_1  v_1 , g_2 v_2 , g_1 v_3, g_2 v_4 ) - \Phi_n (v_1 , v_2 , v_3 , v_4) \right)= 0.\end{equation}
	\end{itemize}
	
	Indeed, $$ | \Phi_n (g_1 v_1 , g_2 v_2 , g_1 v_3 , v_2 v_4) - \Phi_n (v_1 , v_2 , v_3 , v_4 ) | = $$ $$ =  \frac{ | \int_{F_n} \langle g g_1 v_1 , g_2 v_2 \rangle \overline{\langle g g_1 v_3 , g_2 v_4 \rangle} \cdot dg - \int_{F_n} \langle g v_1 , v_2 \rangle \overline{\langle g v_3 , v_4 \rangle} \cdot dg |}{ \Mss{v_0}{v_0}{F_n} } \leq $$ $$ \leq  \frac{ \int_{F_n \triangle g_2^{-1} F_n g_1 } |\langle g v_1 , v_2 \rangle | \cdot | \langle g v_3 , v_4 \rangle | \cdot dg}{ \Mss{v_0}{v_0}{F_n} } \leq $$ $$ \leq \sqrt{\frac{ \Mss{v_1}{v_2}{F_n \triangle g_2^{-1} F_n g_1} }{ \Mss{v_0}{v_0}{F_n} }} \cdot \sqrt{\frac{ \Mss{v_3}{v_4}{F_n \triangle g_2^{-1} F_n g_1} }{ \Mss{v_0}{v_0}{F_n} }}$$ and the last expression tends to $0$ as $n \to +\infty$ by condition (2) of Definition \ref{def c-temp}.
	
	\medskip
	
	\begin{itemize}
		\item There exists a sub-sequence $0 \leq m_0 <  m_1 <  \ldots$ such that $$ \lim_{n \to +\infty} \Phi_{m_n} (v_1 , v_2 , v_3 , v_4) = \langle v_1 , v_3 \rangle \overline{ \langle v_2 , v_4 \rangle } $$ for all $v_1 , v_2 , v_3 , v_4 \in V$.
	\end{itemize}
	
	By the sequential Banach-Alaouglu theorem (which is applicable since $V$ is separable), we can find a sub-sequence $0 \leq m_0 <  m_1 <  \ldots$ and a bounded quadlinear form $$ \Phi : V \times \overline{V} \times \overline{V} \times V \to \bbC$$ such that $\lim_{n \to +\infty}^{\textnormal{weak-}*} \Phi_{m_n} = \Phi$. Passing to the limit in equation (\ref{eq approx GG-inv}) we obtain that for all $g_1 , g_2 \in G$ and all $v_1 , v_2 , v_3 , v_4 \in V$ we have $$ \Phi (g_1 v_1 , g_2 v_2 , g_1 v_3 , g_2 v_4) = \Phi (v_1 , v_2 , v_3 , v_4).$$ Fixing $v_2 , v_4$, we obtain a bounded bilinear form $\Phi (- , v_2 , - , v_4) : V \times \overline{V} \to \bbC$ which is $G$-invariant, and hence by Schur's lemma is a multiple of the form $\langle - , - \rangle$, i.e. we have a uniquely defined $c_{v_2 , v_4} \in \bbC$ such that $$ \Phi (v_1 , v_2 , v_3 , v_4) = c_{v_2 , v_4} \cdot \langle v_1 , v_3 \rangle$$ for all $v_1 , v_3 \in V$. Similarly, fixing $v_1 , v_3$ we see that we have a uniquely defined $d_{v_1 , v_3} \in \bbC$ such that $$ \Phi (v_1 , v_2 , v_3 , v_4) = d_{v_1 , v_3} \cdot \overline{\langle v_2 , v_4 \rangle}$$ for all $v_2, v_4 \in \bbC$. Since $\Phi (v_0 , v_0, v_0, v_0) = 1$, plugging in $(v_1 , v_2 , v_3 ,v_4) := (v_0 , v_0 , v_0, v_0)$ in the first equality we find $c_{v_0 , v_0} = 1$. Then plugging in $(v_1 , v_2 , v_3 , v_4) := (v_1 , v_0 , v_3 , v_0)$ in both equalities and comparing, we find $d_{v_1 , v_3} = \langle v_1 , v_3 \rangle$. Hence we obtain $$ \Phi (v_1 , v_2 , v_3 , v_4) = \langle v_1 , v_3 \rangle \overline{\langle v_2 , v_4 \rangle} $$ for all $v_1 , v_2 , v_3 , v_4 \in V$.
	
	\medskip
	
	Now, writing explicitly $\Phi_{m_n} (v_1 , v_2 , v_3 , v_4)$, we see that the limit in (\ref{eq approx schur bi}) is valid on our sub-sequence, so we are done, as we explained in the beginning of the proof.
	
\end{proof}

\subsection{} If one unit vector $v_0$ satisfies conditions (1) and (2) of Definition \ref{def c-temp} then all unit vectors do:

\begin{proposition}\label{prop c-tempered all are temp}
	Let $V$ be an irreducible unitary $G$-representation. Assume that $V$ is c-tempered with F{\o}lner sequence $F_0 , F_1 , \ldots$ and let $v_0 \in V$ be a unit vector for which the conditions (1) and (2) of Definition \ref{def c-temp} are satisfied. Then for any unit vector $v'_0 \in V$ the conditions (1) and (2) of Definition \ref{def c-temp} are satisfied.
\end{proposition}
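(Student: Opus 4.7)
The plan is to deduce this immediately from the asymptotic Schur orthogonality relations of Proposition \ref{prop c-tempered orth rel}, which were proved using only conditions (1) and (2) of Definition \ref{def c-temp} for the original vector $v_0$. The key observation is that substituting $v_1 = v_2 = v_3 = v_4 = v_0'$ into equation (\ref{eq approx schur bi}) yields
\[
\lim_{n \to +\infty} \frac{\Mss{v_0'}{v_0'}{F_n}}{\Mss{v_0}{v_0}{F_n}} \;=\; \langle v_0' , v_0' \rangle \overline{\langle v_0' , v_0' \rangle} \;=\; 1,
\]
since $v_0'$ is a unit vector. In particular $\Mss{v_0'}{v_0'}{F_n} > 0$ for all sufficiently large $n$, so the ratios appearing in conditions (1) and (2) for $v_0'$ are well-defined for large $n$.

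Once this asymptotic equivalence is in hand, verifying (1) and (2) for $v_0'$ is routine. For condition (1), given any $v_1, v_2 \in V$, I would write
\[
\frac{\Mss{v_1}{v_2}{F_n}}{\Mss{v_0'}{v_0'}{F_n}} \;=\; \frac{\Mss{v_1}{v_2}{F_n}}{\Mss{v_0}{v_0}{F_n}} \cdot \frac{\Mss{v_0}{v_0}{F_n}}{\Mss{v_0'}{v_0'}{F_n}}.
\]
The first factor has finite $\limsup$ by condition (1) for $v_0$, while the second factor tends to $1$ by the above; hence the product has finite $\limsup$. For condition (2), given $v_1, v_2 \in V$ and a compact set $K \subset G$, the analogous factorization
\[
\frac{\sup_{g_1,g_2 \in K} \Mss{v_1}{v_2}{F_n \triangle g_2^{-1} F_n g_1}}{\Mss{v_0'}{v_0'}{F_n}} \;=\; \frac{\sup_{g_1,g_2 \in K} \Mss{v_1}{v_2}{F_n \triangle g_2^{-1} F_n g_1}}{\Mss{v_0}{v_0}{F_n}} \cdot \frac{\Mss{v_0}{v_0}{F_n}}{\Mss{v_0'}{v_0'}{F_n}}
\]
expresses the desired ratio as a product of a factor tending to $0$ (by condition (2) for $v_0$) with a factor tending to $1$, so the product tends to $0$.

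There is essentially no obstacle here: the content of the proposition is really already contained in Proposition \ref{prop c-tempered orth rel}, and once one notices that the orthogonality relations force $\Mss{v_0'}{v_0'}{F_n}$ and $\Mss{v_0}{v_0}{F_n}$ to be asymptotically proportional (with constant $1$), the passage from $v_0$ to $v_0'$ in conditions (1) and (2) is automatic. The only mild care needed is to note that Proposition \ref{prop c-tempered orth rel} itself is proved assuming the hypotheses for $v_0$, so one must apply it before attempting the transfer — the argument is not circular.
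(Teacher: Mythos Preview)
Your proof is correct and follows exactly the same approach as the paper: apply Proposition \ref{prop c-tempered orth rel} with $v_1=v_2=v_3=v_4=v_0'$ to obtain $\lim_{n\to+\infty}\Mss{v_0'}{v_0'}{F_n}/\Mss{v_0}{v_0}{F_n}=1$, and then observe that conditions (1) and (2) transfer from $v_0$ to $v_0'$. The paper's version simply says ``this makes the claim clear'' where you spell out the factorizations explicitly.
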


\begin{proof}

	Let $v'_0 \in V$ be a unit vector. From (\ref{eq approx schur bi}) we get $$ \lim_{n \to +\infty} \frac{ \Mss{v'_0}{v'_0}{F_n} }{ \Mss{v_0}{v_0}{F_n} } = 1.$$ This makes the claim clear.
	
\end{proof}

\subsection{}

We also have the following version of ``asymptotic Schur orthogonality relations" for a pair of non-isomorphic irreducible representations:

\begin{proposition}\label{prop c-tempered orth rel cross}
	Let $V$ and $W$ be irreducible unitary $G$-representations. Assume that $V$ and $W$ are c-tempered with the same F{\o}lner sequence $F_0 , F_1 , \ldots$ and let $v_0 \in V$ and $w_0 \in W$ be unit vectors for which the conditions (1) and (2) of Definition \ref{def c-temp} are satisfied. Then for all $v_1 , v_2 \in V$ and $w_1 , w_2 \in W$ we have \begin{equation}\label{eq approx schur bi cross} \lim_{n \to +\infty} \frac{\int_{F_n} \langle g v_1 , v_2 \rangle \overline{\langle g w_1 , w_2 \rangle } \cdot dg}{ \sqrt{\Mss{v_0}{v_0}{F_n}} \sqrt{\Mss{w_0}{w_0}{F_n}} } = 0.\end{equation}
\end{proposition}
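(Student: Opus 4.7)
The plan is to follow closely the proof of Proposition \ref{prop c-tempered orth rel}, replacing the single representation by the pair $(V,W)$ and replacing the appeal to Schur's lemma for self-intertwiners by the appeal to Schur's lemma for intertwiners between non-isomorphic irreducibles (an assumption I read as implicit in the statement, since otherwise taking $V = W$ would contradict Proposition \ref{prop c-tempered orth rel}). As in that proof, it suffices to exhibit, for any given subsequence of indices, a further subsequence along which the limit in (\ref{eq approx schur bi cross}) holds.

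Define bounded bilinear maps $S_n^V : V \times \overline{V} \to L^2(G)$ and $S_n^W : W \times \overline{W} \to L^2(G)$ by
$$ S_n^V(v_1, v_2)(g) := \frac{\mathbf{1}_{F_n}(g)}{\sqrt{\Mss{v_0}{v_0}{F_n}}} \cdot \langle g v_1, v_2 \rangle, \qquad S_n^W(w_1, w_2)(g) := \frac{\mathbf{1}_{F_n}(g)}{\sqrt{\Mss{w_0}{w_0}{F_n}}} \cdot \langle g w_1, w_2 \rangle,$$
and set
$$ \Psi_n(v_1, v_2, w_1, w_2) := \langle S_n^V(v_1, v_2), S_n^W(w_1, w_2) \rangle_{L^2(G)},$$
which is precisely the left-hand side of (\ref{eq approx schur bi cross}). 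Condition (1) of Definition \ref{def c-temp} together with Banach--Steinhaus gives $\|S_n^V\|, \|S_n^W\| \leq C$ uniformly in $n$, and hence $\|\Psi_n\| \leq C^2$. The approximate two-sided invariance
$$ \lim_{n \to +\infty} \bigl( \Psi_n(g_1 v_1, g_2 v_2, g_1 w_1, g_2 w_2) - \Psi_n(v_1, v_2, w_1, w_2) \bigr) = 0$$
follows from the same Cauchy--Schwarz computation as in the proof of Proposition \ref{prop c-tempered orth rel}, upper-bounding the error integral over $F_n \triangle g_2^{-1} F_n g_1$ by $\sqrt{\Mss{v_1}{v_2}{F_n \triangle g_2^{-1} F_n g_1}} \cdot \sqrt{\Mss{w_1}{w_2}{F_n \triangle g_2^{-1} F_n g_1}}$, then normalizing by $\sqrt{\Mss{v_0}{v_0}{F_n}} \cdot \sqrt{\Mss{w_0}{w_0}{F_n}}$ and applying condition (2) to $V$ and $W$ separately.

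By the sequential Banach--Alaoglu theorem (using separability of $V$ and $W$), extract a sub-subsequence $\{m_n\}$ along which $\Psi_{m_n}$ converges weakly-$*$ to a bounded quadrilinear form $\Psi$ on $V \times \overline{V} \times \overline{W} \times W$; passing to the limit in the invariance relation yields the exact identity $\Psi(g_1 v_1, g_2 v_2, g_1 w_1, g_2 w_2) = \Psi(v_1, v_2, w_1, w_2)$. Now fix $v_2 \in V$ and $w_2 \in W$; the form $(v_1, w_1) \mapsto \Psi(v_1, v_2, w_1, w_2)$ is bounded, linear in $v_1$ and conjugate-linear in $w_1$, so by the Riesz representation there is a bounded operator $T_{v_2, w_2} : V \to W$ with $\Psi(v_1, v_2, w_1, w_2) = \langle T_{v_2, w_2} v_1, w_1 \rangle_W$. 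Taking $g_2 = 1$ in the invariance relation shows $T_{v_2, w_2}$ is $G$-equivariant; since $V$ and $W$ are irreducible and non-isomorphic, Schur's lemma forces $T_{v_2, w_2} = 0$, hence $\Psi \equiv 0$. So $\Psi_{m_n} \to 0$ pointwise, which is (\ref{eq approx schur bi cross}) on the chosen subsequence. The only essentially new ingredient beyond the proof of Proposition \ref{prop c-tempered orth rel} is this invocation of Schur's lemma for non-isomorphic irreducibles; all other bookkeeping carries over verbatim, and indeed this is the only step that could plausibly cause trouble.
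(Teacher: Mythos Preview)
Your proof is correct and follows essentially the same route as the paper's: define the mixed quadrilinear forms, establish joint boundedness and asymptotic $(G\times G)$-invariance exactly as in Proposition~\ref{prop c-tempered orth rel}, extract a weak-$*$ limit via Banach--Alaoglu, and kill it with Schur's lemma for non-isomorphic irreducibles. The paper's proof is terser (it simply says ``By Schur's lemma we obtain $\Phi = 0$'' without spelling out the Riesz-representation step), and you are right that the non-isomorphism hypothesis is implicit in the statement and necessary for the conclusion.
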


\begin{proof}
	We proceed similarly to the proof of Proposition \ref{prop c-tempered orth rel}. Namely, again it is enough to find a sub-sequence on which the limit holds. We define quadlinear forms $$ \Phi_1 , \Phi_2 , \ldots : V \times \overline{V} \times \overline{W} \times W \to \bbC$$ by $$ \Phi_n (v_1 , v_2 , w_1 , w_2) := \frac{\int_{F_n} \langle g v_1 , v_2 \rangle \overline{ \langle g w_1 , w_2 \rangle} \cdot dg}{\sqrt{\Mss{v_0}{v_0}{F_n}} \sqrt{\Mss{w_0}{w_0}{F_n}} }.$$ We see that these are jointly bounded, and that for all $g_1 , g_2 \in G$ and $v_1 , v_2 \in V$ and $w_1 , w_2 \in W$ we have $$ \lim_{n \to +\infty} \left( \Phi_n (g_1 v_1 , g_2 v_2 , g_1 w_1 , g_2 w_2) - \Phi_n (v_1 , v_2 , w_1 , w_2) \right) = 0.$$ We then find a bounded quadlinear form $$ \Phi : V \times \overline{V} \times \overline{W} \times W \to \bbC$$ and a sub-sequence $0 \leq m_0 < m_1 < \ldots$ such that $\lim_{n \to +\infty}^{\textnormal{weak-}*} \Phi_{m_n} = \Phi$. We get, for all $g_1 , g_2 \in G$ and $v_1 , v_2 \in V$ and $w_1 , w_2 \in W$: $$ \Phi (g_1 v_1 , g_2 v_2 , g_1 w_1 , g_2 w_2) = \Phi (v_1 , v_2 , w_1 , w_2).$$ By Schur's lemma we obtain $\Phi = 0$, giving us the desired.
\end{proof}

\subsection{}

It is easy to answer Question \ref{main question} in the case of square-integrable representations:

\begin{proposition}\label{prop sq int}
	Let $V$ be a square-integrable irreducible unitary $G$-representation. Then $V$ is c-tempered with F{\o}lner sequence any increasing sequence $F_0 , F_1 , \ldots $ of open pre-compact subsets in $G$, such that $1 \in F_0$ and $\cup_{n \ge 0} F_n = G$.
\end{proposition}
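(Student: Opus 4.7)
The plan is to fix any unit vector $v_0 \in V$ and verify directly conditions (1) and (2) of Definition \ref{def c-temp}. The whole argument is driven by the observation that, by square-integrability, for every $v_1, v_2 \in V$ the function $g \mapsto |\langle g v_1, v_2 \rangle|^2$ is integrable on $G$, so the set function
\[
\mu_{v_1, v_2}(E) := \Mss{v_1}{v_2}{E} = \int_E |\langle g v_1, v_2 \rangle|^2 \, dg
\]
is a finite Borel measure on $G$. In particular $\mu_{v_0, v_0}(G) \in (0, +\infty)$.

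For condition (1), since $F_n \nearrow G$, monotone convergence gives
\[
\Mss{v_1}{v_2}{F_n} \nearrow \mu_{v_1, v_2}(G) < +\infty, \qquad \Mss{v_0}{v_0}{F_n} \nearrow \mu_{v_0, v_0}(G) > 0.
\]
Hence the ratio in condition (1) converges to the finite number $\mu_{v_1, v_2}(G)/\mu_{v_0, v_0}(G)$; in particular its limsup is finite.

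For condition (2), the denominator $\Mss{v_0}{v_0}{F_n}$ stays bounded away from zero, so it suffices to show that
\[
\sup_{g_1, g_2 \in K} \mu_{v_1, v_2}(F_n \triangle g_2^{-1} F_n g_1) \xrightarrow[n \to \infty]{} 0.
\]
The key step, which I expect to be the only slightly non-routine part, is to promote pointwise convergence in $(g_1, g_2)$ to \emph{uniform} convergence over the compact set $K \times K$. For this, fix $\varepsilon > 0$. Since $\mu_{v_1, v_2}$ is a finite measure on the $\sigma$-compact space $G$, by inner regularity there is a compact set $C \subset G$ with $\mu_{v_1, v_2}(G \setminus C) < \varepsilon$. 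Now $K C K \subset G$ is compact and the open sets $F_n$ cover it and are nested, so by a finite-subcover argument $K C K \subset F_n$ for all $n$ sufficiently large. For such $n$, and for every $g_1, g_2 \in K$, we have $C \subset g_2^{-1} F_n g_1$, hence $\mu_{v_1, v_2}(G \setminus g_2^{-1} F_n g_1) \leq \varepsilon$. Combined with $\mu_{v_1, v_2}(G \setminus F_n) < \varepsilon$ (eventually, again by monotone convergence), and the bound
\[
\mu_{v_1, v_2}(F_n \triangle g_2^{-1} F_n g_1) \leq \mu_{v_1, v_2}(G \setminus F_n) + \mu_{v_1, v_2}(G \setminus g_2^{-1} F_n g_1),
\]
this yields the required uniform limit.

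Finally, one should check the data declared in Definition \ref{def c-temp}: each $F_n$ is measurable and pre-compact by assumption, and contains a neighborhood of $1$ because $1 \in F_0 \subset F_n$ and $F_n$ is open. Thus all hypotheses are met, and $V$ is c-tempered with Følner sequence $\{F_n\}_{n \geq 0}$.
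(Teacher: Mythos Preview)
Your proof is correct and follows essentially the same route as the paper's: both verify conditions (1) and (2) of Definition~\ref{def c-temp} directly, using that the matrix-coefficient measures are finite and that the symmetric difference $F_n \triangle g_2^{-1} F_n g_1$ eventually lies outside a set carrying all but $\varepsilon$ of the mass. One small slip: to conclude $C \subset g_2^{-1} F_n g_1$ for all $g_1,g_2 \in K$ you need $K C K^{-1} \subset F_n$, not $K C K \subset F_n$; also, the detour through inner regularity is unnecessary, since (as the paper does) you can simply take $C = F_{n_0}$ for $n_0$ large enough that $\mu_{v_1,v_2}(G\setminus F_{n_0}) < \varepsilon$.
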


\begin{proof}
	Recall, that matrix coefficients of a square-integrable irreducible representation are square integrable. Let $v_0 \in V$ be a unit vector. Let $F_0 , F_1 , \ldots $ be any increasing sequence of open pre-compact subsets in $G$ whose union is $G$ and with $1 \in F_0$. Let $v_1,v_2 \in V$. Condition (1) of Definition \ref{def c-temp} holds because we have $$ \Mss{v_1}{v_2}{F_n} \leq \Mss{v_1}{v_2}{G} \leq \left( \frac{ \Mss{v_1}{v_2}{G} }{ \Mss{v_0}{v_0}{F_1} } \right) \cdot \Mss{v_0}{v_0}{F_n} .$$ As for condition (2) of Definition \ref{def c-temp}, let $\epsilon > 0$ and let $K \subset G$ be compact. There exists $n_0 \ge 0$ such that $$ \Mss{v_1}{v_2}{G \smallsetminus F_{n_0}} \leq \epsilon \cdot \Mss{v_0}{v_0}{F_1} .$$ There exists $n_1 \ge n_0$ such that $K F_{n_0} K^{-1} \subset F_{n_1}$. Let $n \ge n_1$ and let $g_1 , g_2 \in K$. Notice that $ ( F_n \triangle g_2^{-1} F_n g_1 ) \cap F_{n_0} = \emptyset$. Thus we have $$ \Mss{v_1}{v_2}{F_n \triangle g_2^{-1} F_n g_1} \leq \Mss{v_1}{v_2}{G \smallsetminus F_{n_0}} \leq \epsilon \cdot \Mss{v_0}{v_0}{F_1} \leq $$ $$ \leq \epsilon \cdot \Mss{v_0}{v_0}{F_n}.$$
\end{proof}

\section{c-Tempered irreps are tempered}\label{sec ctemp is temp}

In this section, let $G$ be a unimodular second countable locally compact group. We introduce some intermediate concepts, with the goal of showing that c-tempered irreducible unitary $G$-representations are tempered (Corollary \ref{cor c-temp are temp}).

\subsection{}

Let us recall some standard definitions and statements regarding weak containment.

\begin{definition}
	Let $V$ and $W$ be unitary $G$-representations.
	
	\begin{enumerate}
		\item $V$ is \textbf{weakly contained} in $W$ if for every $v \in V$, compact $K \subset G$ and $\epsilon > 0$ there exist $w_1 , \ldots , w_r \in W$ such that $$ | \langle gv , v \rangle - \sum_{1 \leq i \leq r} \langle g w_i , w_i \rangle | \leq \epsilon$$ for all $g \in K$.
		\item $V$ is \textbf{Zimmer-weakly contained}\footnote{or ``weakly contained in the sense of Zimmer", following \cite[Remark F.1.2.(ix)]{BeHaVa}.} in $W$ if for every $v_1 , \ldots , v_r \in V$, compact $K \subset G$ and $\epsilon > 0$ there exist $w_1 , \ldots , w_r \in W$ such that $$ |\langle gv_i , v_j \rangle - \langle g w_i , w_j \rangle | \leq \epsilon $$ for all $1 \leq i,j \leq r$ and $g \in K$.
	\end{enumerate}
	
\end{definition}

To facilitate the formulation of the next lemma, let us also give the following intermediate definition:

\begin{definition}
	Let $V$ and $W$ be unitary $G$-representations. Let us say that $V$ is \textbf{strongly-weakly contained} in $W$ if for every $v \in V$, compact $K \subset G$ and $\epsilon > 0$ there exists $w \in W$ such that $$ | \langle g v , v \rangle - \langle gw , w \rangle | \leq \epsilon$$ for all $g \in K$.
\end{definition}

\begin{lemma}\label{lem prop of weak cont}
	Let $V$ and $W$ be unitary $G$-representations.
	
	\begin{enumerate}
		\item If $V$ is Zimmer-weakly contained in $W$ then $V$ is strongly-weakly contained in $W$, and if $V$ is strongly-weakly contained in $W$ then $V$ is weakly contained in $W$.
		\item If $V$ is weakly contained in $W$ then $V$ is strongly-weakly contained in\footnote{Here, $W^{\oplus \infty}$ stands for the Hilbert direct sum of countably many copies of $W$.} $W^{\oplus \infty}$.
		\item If $V$ is weakly contained in $W^{\oplus \infty}$ then $V$ is weakly contained in $W$.
		\item If $V$ is irreducible and $V$ is weakly contained in $W$ then $V$ is strongly-weakly contained in $W$.
		\item If $V$ is cyclic (in particular, if $V$ is irreducible) and $V$ is strongly-weakly contained in $W$ then $V$ is Zimmer-weakly contained in $W$.
		\item If $V$ is strongly-weakly contained in $W$ then $V$ is Zimmer-weakly contained in $W^{\oplus \infty}$.
	\end{enumerate}
\end{lemma}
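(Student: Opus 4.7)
Items (1), (2), and (3) are direct Hilbert-space manipulations, and I would dispatch them first. For (1), I would specialize the Zimmer-weak definition to $v_1 = v_2 = v$ and $r = 1$ to obtain strong-weak containment, and set $r = 1$ in the strong-weak definition to obtain weak containment. For (2), given an approximating family $w_1, \ldots, w_r \in W$ from weak containment, the single vector $(w_1, \ldots, w_r, 0, \ldots) \in W^{\oplus \infty}$ has diagonal matrix coefficient $\sum_i \langle g w_i, w_i\rangle_W$, which is exactly what is needed. For (3), given approximating $\tilde w_i = (w_{i,j})_j \in W^{\oplus \infty}$, I would truncate to $j \leq N$ with $N$ large, using summability of $\sum_j \|w_{i,j}\|^2$ together with unitarity of the $G$-action to bound $|\langle g \tilde w_i, \tilde w_i\rangle - \sum_{j \leq N} \langle g w_{i,j}, w_{i,j}\rangle|$ uniformly in $g$; the truncated finite double sum is a valid approximating family of matrix coefficients of $W$.

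Next I would treat (5) and (6) by the cyclic-vector technique. For (5): given $v_1, \ldots, v_r \in V$, I would approximate each in norm by a finite linear combination $\sum_k c_{ik} g_{ik} v_0$ of translates of a cyclic vector $v_0$. Letting $L \subset G$ be a compact set containing the $g_{ik}$, and setting $K' := L^{-1} K L$, strong-weak containment applied to $v_0$ on $K'$ yields $w_0 \in W$ with $|\langle h v_0, v_0\rangle - \langle h w_0, w_0\rangle| \leq \delta$ for $h \in K'$. Defining $w_i := \sum_k c_{ik} g_{ik} w_0$ and expanding both $\langle g v_i, v_j\rangle$ and $\langle g w_i, w_j\rangle$ sesquilinearly then yields the Zimmer-weak approximation once $\delta$ and the initial norm errors are chosen small enough (in terms of the $c_{ik}$). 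For (6), I would use separability of $V$ to decompose $V = \bigoplus_n V_n$ into at most countably many cyclic subrepresentations; each $V_n$ inherits strong-weak containment in $W$, and (5) promotes this to Zimmer-weak containment of $V_n$ in $W$. For given $v_1, \ldots, v_r \in V$, I would project onto $\bigoplus_{n \leq N} V_n$ for large $N$, apply (5) to each $n$ to produce $w_i^{(n)} \in W$, and assemble these into orthogonal coordinates $\tilde w_i := (w_i^{(1)}, \ldots, w_i^{(N)}, 0, \ldots) \in W^{\oplus \infty}$; $G$-invariance and mutual orthogonality of the $V_n$ then give $\langle g \tilde w_i, \tilde w_j\rangle = \sum_n \langle g w_i^{(n)}, w_j^{(n)}\rangle \approx \sum_n \langle g v_i^{(n)}, v_j^{(n)}\rangle = \langle g v_i, v_j\rangle$.

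The main obstacle is (4), which I would prove by passing to the $C^*$-algebraic framework. Let $A := C^*(G)$ be the full group $C^*$-algebra; unitary $G$-representations correspond to $*$-representations of $A$, and each matrix coefficient $g \mapsto \langle g v, v\rangle$ with $\|v\| = 1$ corresponds to a vector state $\omega_v$ on $A$. Irreducibility of $V$ forces $\omega_v$ to be pure, hence an extreme point of the weak-$*$ compact convex state space of $A$. Weak containment of $V$ in $W$ amounts to $\omega_v$ lying in the weak-$*$ closure of the convex hull of $T := \{\omega_w : w \in W, \|w\| = 1\}$ (the approximate-convex-combination condition $\sum \|w_i\|^2 \approx 1$ is automatic by evaluating at $g = e$). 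Milman's theorem, the converse of Krein--Milman, then places $\omega_v$ in the weak-$*$ closure of $T$ itself, which is the $C^*$-theoretic incarnation of strong-weak containment of $V$ in $W$. The translation between weak-$*$ convergence in $A^*$ and uniform convergence on compact subsets of $G$ is standard for uniformly bounded positive-definite functions, and the circle of ideas is worked out in \cite[Appendix F]{BeHaVa}.
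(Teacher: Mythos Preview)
Your proposal is correct and follows the same strategy as the paper, which simply declares (1)--(3) straightforward, cites \cite[Proposition F.1.4]{BeHaVa} for (4), cites \cite[proof of (iii)$\Rightarrow$(iv) of Proposition 2.2]{Ke} for (5), and for (6) instructs one to decompose $V$ into cyclic summands and invoke (5). Your arguments for (4) via pure states and Milman's partial converse to Krein--Milman, and for (5)--(6) via the cyclic-vector expansion, are precisely what those references carry out, so you have effectively unpacked the paper's citations.
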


\begin{proof}
	Statements $(1)$, $(2)$ and $(3)$ are straight-forward. For statement $(4)$ see, for example, \cite[Proposition F.1.4]{BeHaVa}. For statement $(5)$ see \cite[proof of $(iii)\implies(iv)$ of Proposition 2.2]{Ke}. For statement $(6)$, again see \cite[proof of $(iii)\implies(iv)$ of Proposition 2.2]{Ke} (one writes $V$ as a Hilbert direct sum of countably many cyclic unitary $G$-representations, and uses item $(5)$).
\end{proof}

\begin{corollary}\label{cor temp Zimmer temp}
	Let $V$ and $W$ be unitary $G$-representations.
	
	\begin{enumerate}
		\item $V$ is weakly contained in $W$ if and only if $V$ is Zimmer-weakly contained in $W^{\oplus \infty}$.
		\item If $V$ is irreducible, $V$ is weakly contained in $W$ if and only if $V$ is Zimmer-weakly contained in $W$.
	\end{enumerate}
	
\end{corollary}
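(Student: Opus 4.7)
The plan is to assemble both equivalences directly from the six implications in Lemma \ref{lem prop of weak cont}, treating the corollary as essentially a diagram chase. The key preliminary observation is that $(W^{\oplus \infty})^{\oplus \infty}$ is (canonically isomorphic to) $W^{\oplus \infty}$, so an implication that outputs weak containment in a ``$\oplus \infty$ of something" can be iterated without escalating the target.

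For part (1), the forward direction is a chain of items (2) and (6) of Lemma \ref{lem prop of weak cont}: if $V$ is weakly contained in $W$, then by (2) it is strongly-weakly contained in $W^{\oplus \infty}$, and then by (6) it is Zimmer-weakly contained in $(W^{\oplus \infty})^{\oplus \infty} \cong W^{\oplus \infty}$. The backward direction chains items (1) and (3): Zimmer-weak containment in $W^{\oplus \infty}$ gives (via the first half of (1)) weak containment in $W^{\oplus \infty}$, and then (3) drops the $\oplus \infty$ to yield weak containment in $W$.

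For part (2), irreducibility of $V$ lets one bypass the $\oplus \infty$ in the target. The forward direction applies (4) to pass from weak to strongly-weak containment in $W$ itself (this is where irreducibility of $V$ is used), and then (5) to pass from strongly-weak to Zimmer-weak containment in $W$ (using that an irreducible representation is cyclic). The backward direction is the trivial half of (1): Zimmer-weak containment implies weak containment in the same target.

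There is no real obstacle: everything is a bookkeeping exercise in Lemma \ref{lem prop of weak cont}. The only subtlety worth spelling out is the isomorphism $(W^{\oplus \infty})^{\oplus \infty} \cong W^{\oplus \infty}$ used in part (1), and the distinction between the hypotheses of (4) and (5) (irreducibility vs.\ cyclicity, though irreducible implies cyclic) used in part (2).
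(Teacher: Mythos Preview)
Your proposal is correct and is exactly the intended derivation: the paper states this corollary without proof, leaving it as an immediate consequence of Lemma \ref{lem prop of weak cont}, and your chain of implications is precisely the natural unpacking. One tiny wording fix: in the backward direction of part (1) you invoke ``the first half of (1)'' to pass from Zimmer-weak containment to weak containment, but you actually need both halves of item (1) (Zimmer $\Rightarrow$ strongly-weakly $\Rightarrow$ weakly); the logic is unaffected.
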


The following definition of temperedness is classical:

\begin{definition}
	A unitary $G$-representation $V$ is said to be \textbf{tempered} if $V$ is weakly contained in\footnote{Recall that $L^2 (G)$ denotes $L^2 (G , dg)$, viewed as a unitary $G$-representation via the right regular action.} $L^2 (G)$.
\end{definition}

\begin{remark}\label{rem irr zimmer}
Notice that an irreducible unitary $G$-representation is tempered if and only if it is Zimmer-weakly contained in $L^2 (G)$, by part $(2)$ of Corollary \ref{cor temp Zimmer temp}.
\end{remark}

\subsection{}

The next definitions are related to the idea that one representation is weakly contained in another if there ``almost" exists a $G$-intertwining isometric embedding from the one to the other.

\begin{definition}\label{def asymp emb}
	Let $V$ and $W$ be unitary $G$-representations. A sequence $\{ S_n \}_{n \ge 0} \subset \calB (V ; W)$ is an \textbf{asymptotic embedding} if the following conditions are satisfied:
		
		\begin{enumerate}
			\item The operators $\{ S_n \}_{n \ge 0}$ are jointly bounded, i.e. there exists $C>0$ such that $|| S_n ||^2 \leq C$ for all $n \ge 0$.
			\item Given $v_1 , v_2 \in V$ and a compact $K \subset G$ we have $$ \lim_{n \to +\infty} \ \sup_{g \in K} | \langle (S_n g - g S_n) v_1 , S_n v_2 \rangle | = 0.$$
			\item Given $v_1 , v_2 \in V$, we have $$ \lim_{n \to +\infty} \ \langle S_n v_1 , S_n v_2 \rangle = \langle v_1 , v_2 \rangle.$$
		\end{enumerate}
\end{definition}

\begin{definition} Let $V$ and $W$ be unitary $G$-representations.
	\begin{enumerate}
		\item We say that $V$ is \textbf{o-weakly contained}\footnote{``o" stands for ``operator".} in $W$ if there exists an asymptotic embedding $\{ S_n \}_{n \ge 0} \subset \calB (V ; W)$.
		\item We say that $V$ is \textbf{o-tempered} if it is o-weakly contained in $L^2 (G)$.
	\end{enumerate}
\end{definition}

\begin{lemma}\label{lem asymp emb uniform}
	In the context of Definition \ref{def asymp emb}, if conditions $(1)$ and $(2)$ of Definition \ref{def asymp emb} are satisfied then given compacts $L_1 , L_2 \subset V$ and a compact $K \subset G$ we have $$ \lim_{n \to +\infty} \sup_{v_1 \in L_1 , v_2 \in L_2 , g \in K} | \langle (S_n g - g S_n) v_1 , S_n v_2 \rangle | = 0,$$ and if conditions $(1)$ and $(3)$ of Definition \ref{def asymp emb} are satisfied then given compacts $L_1, L_2 \subset V$ we have $$ \lim_{n \to +\infty} \sup_{v_1 \in L_1 , v_2 \in L_2} |\langle S_n v_1 , S_n v_2 \rangle - \langle v_1 , v_2 \rangle| = 0.$$
\end{lemma}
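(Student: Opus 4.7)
The plan is to reduce both assertions to the standard fact that a sequence of equicontinuous functions on a metric space which converges pointwise converges uniformly on compact subsets. The equicontinuity will in both cases come for free from the joint bound $\|S_n\|^2 \le C$.

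I would begin with the second (simpler) assertion. Consider the sesquilinear forms $\Psi_n \colon V \times \overline{V} \to \bbC$ given by $\Psi_n(v_1, v_2) := \langle S_n v_1, S_n v_2 \rangle - \langle v_1, v_2 \rangle$. By joint boundedness they satisfy $|\Psi_n(v_1, v_2)| \le (C+1) \cdot \|v_1\| \cdot \|v_2\|$, whence the estimate
\[
|\Psi_n(v_1, v_2) - \Psi_n(v_1', v_2')| \le (C+1)\bigl( \|v_1 - v_1'\| \cdot \|v_2\| + \|v_1'\| \cdot \|v_2 - v_2'\| \bigr)
\]
shows that $\{\Psi_n\}_{n \ge 0}$ is equicontinuous on bounded subsets of $V \times V$. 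Combined with the pointwise convergence $\Psi_n \to 0$ from condition $(3)$, a standard finite $\epsilon$-net argument applied to the compact set $L_1 \times L_2$ delivers the claimed uniform convergence.

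For the first assertion I would carry out the same argument, but first absorb the supremum over $g \in K$ into the function being studied. Define
\[
\Phi_n(v_1, v_2) := \sup_{g \in K} \bigl| \langle (S_n g - g S_n) v_1, S_n v_2 \rangle \bigr|.
\]
Since $g$ acts unitarily on $V$ and $W$, we have $\|S_n g - g S_n\| \le 2\sqrt{C}$ for all $g \in G$ and all $n$, hence $\Phi_n(v_1, v_2) \le 2C \cdot \|v_1\| \cdot \|v_2\|$. Exactly as above, splitting a difference $\Phi_n(v_1, v_2) - \Phi_n(v_1', v_2')$ through the triangle inequality for the operator $S_n g - g S_n$ inside and $S_n$ outside yields an equicontinuity estimate for $\{\Phi_n\}_{n \ge 0}$ on bounded subsets of $V \times V$, with a constant depending only on $C$. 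Condition $(2)$ of Definition \ref{def asymp emb} is precisely the statement that $\Phi_n \to 0$ pointwise, and then the same finite $\epsilon$-net argument on $L_1 \times L_2$ gives the required uniformity.

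Since the whole argument rests only on boundedness plus pointwise convergence, there is no real obstacle — the only thing to check carefully is the equi-Lipschitz bound on $\Phi_n$, which uses that both $S_n$ and the commutator $S_n g - g S_n$ are uniformly bounded in $n$ and in $g \in K$; this latter fact follows immediately from the unitarity of the $G$-actions on $V$ and $W$.
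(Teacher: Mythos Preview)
Your proof is correct and follows essentially the same approach as the paper: the paper simply cites the general functional-analytic principle that pointwise convergence of an equicontinuous family implies uniform convergence on compacts (\cite[Proposition 32.5]{Tr}), while you spell out the equicontinuity estimates explicitly and run the $\epsilon$-net argument by hand. The only cosmetic difference is that you absorb the $\sup_{g\in K}$ into the definition of $\Phi_n$ before applying the principle, which is a perfectly reasonable way to package the same idea.
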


\begin{proof}
	This follows from the well-known fact from functional analysis that pointwise convergence coincides with compact convergence on equi-continuous subsets, see \cite[Proposition 32.5]{Tr}.
\end{proof}

\begin{lemma}\label{lem asymp emb irrep}
	In the context of Definition \ref{def asymp emb}, assume that $V$ is irreducible. If conditions $(1)$ and $(2)$ of Definition \ref{def asymp emb} are satisfied then there exists a sub-sequence $0 \leq m_0 < m_1 < \ldots$ and $c \in \bbR_{\ge 0}$ such that for all $v_1 , v_2 \in V$ we have \begin{equation}\label{eq limit c} \lim_{n \to +\infty} \ \langle S_{m_n} v_1 , S_{m_n} v_2 \rangle = c \cdot \langle v_1 , v_2 \rangle.\end{equation} In particular, if there exists $v \in V$ such that $\liminf_{n \to +\infty} || S_n v ||^2 > 0$ then there exists $d \in \bbR_{> 0}$ (in fact, $d^{-2} = \lim_{n \to +\infty} || S_{m_n} v ||^2 / || v ||^2$) such that $\{ d S_{m_n} \}_{n \ge 0}$ satisfies condition $(3)$ of Definition \ref{def asymp emb}, i.e. is an asymptotic embedding.
\end{lemma}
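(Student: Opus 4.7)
The plan is to view $\Phi_n(v_1,v_2) := \langle S_n v_1, S_n v_2 \rangle$ as a uniformly bounded family of sesquilinear forms on $V$, extract a weak-$*$ limit along a subsequence, show the limit is $G$-invariant, and invoke Schur's lemma to identify it with a scalar multiple of $\langle -, - \rangle$.

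First, by condition (1) of Definition \ref{def asymp emb}, the positive operators $T_n := S_n^* S_n \in \calB(V)$ satisfy $\|T_n\| \leq C$. Since $V$ is separable, the weak operator topology is metrizable on the closed $C$-ball of $\calB(V)$, so after passing to a subsequence $\{m_n\}_{n \ge 0}$ I may assume $T_{m_n} \to T$ weakly for some $T \in \calB(V)$ with $T \ge 0$. Setting $\Phi(v_1, v_2) := \langle T v_1, v_2 \rangle$ then gives $\Phi_{m_n}(v_1, v_2) \to \Phi(v_1, v_2)$ for all $v_1, v_2 \in V$.

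The heart of the argument is $G$-invariance: $\Phi(g v_1, g v_2) = \Phi(v_1, v_2)$ for all $g \in G$ and $v_1, v_2 \in V$. Using unitarity of the $W$-action to rewrite $\Phi_n(v_1, v_2) = \langle g S_n v_1, g S_n v_2 \rangle$, one gets
\[
\Phi_n(g v_1, g v_2) - \Phi_n(v_1, v_2) = \langle (S_n g - g S_n) v_1, S_n g v_2 \rangle + \langle g S_n v_1, (S_n g - g S_n) v_2 \rangle.
\]
The first summand tends to $0$ uniformly for $g \in K$ by Lemma \ref{lem asymp emb uniform} applied with $L_1 = \{v_1\}$ and $L_2 = K \cdot v_2$ (compact in $V$ by strong continuity of the representation). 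For the second summand, the obstacle is that $g S_n v_1$ is not of the form "$S_n$ times something" required by Definition \ref{def asymp emb}(2); the fix is to pass to complex conjugate and then use unitarity of the $W$-action to rewrite it, up to sign, as $\langle (S_n h - h S_n)(h^{-1} v_2), S_n v_1 \rangle$ with $h = g^{-1}$, after which Lemma \ref{lem asymp emb uniform} applies with $L_1 = K \cdot v_2$, $L_2 = \{v_1\}$, and compact parameter set $K^{-1}$. This is the main subtlety of the proof: without the compact-subset uniformity afforded by Lemma \ref{lem asymp emb uniform}, a naïve direct application of condition (2) would leave us with a $g$-dependent vector in the "$S_n$-slot" and would not conclude.

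Since $V$ is irreducible and $\Phi$ is a bounded $G$-invariant sesquilinear form, Schur's lemma yields $\Phi(v_1, v_2) = c \langle v_1, v_2 \rangle$ for a unique scalar $c$, and $c \in \bbR_{\ge 0}$ because $\Phi(v, v) = \lim_n \|S_{m_n} v\|^2 \ge 0$. For the second assertion, if $v \in V$ has $\liminf_n \|S_n v\|^2 > 0$, then $c \|v\|^2 = \lim_n \|S_{m_n} v\|^2 \ge \liminf_n \|S_n v\|^2 > 0$, so $c > 0$; setting $d := c^{-1/2}$ (so $d^{-2} = \lim_n \|S_{m_n} v\|^2 / \|v\|^2$) the rescaled sequence $\{d S_{m_n}\}_{n \ge 0}$ manifestly inherits conditions (1) and (2), while condition (3) follows from (\ref{eq limit c}).
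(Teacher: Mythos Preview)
Your proof is correct and follows essentially the same strategy as the paper: extract a weak-operator-topology limit of $S_n^* S_n$ along a subsequence, verify $G$-invariance of the limit, and apply Schur's lemma. The only difference is that the paper checks $Sg = gS$ via the decomposition $\langle S_n g v_1, S_n v_2\rangle - \langle S_n v_1, S_n g^{-1} v_2\rangle$, whose two pieces are already exactly in the form required by condition~(2), so the appeal to Lemma~\ref{lem asymp emb uniform} (and the attendant compact-set manipulations you flag as ``the main subtlety'') is in fact unnecessary.
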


\begin{proof}
	By the sequential Banach–Alaoglu theorem (applicable as $V$ is separable, and $\{ S_n^* S_n \}_{n \ge 0}$ are jointly bounded by condition $(1)$), there exists a sub-sequence $1 \leq m_0 < m_1 < \ldots$ such that $\{ S_{m_n}^* S_{m_n} \}_{n \ge 0}$ converges in the weak operator topology to some $S \in \calB (V)$.
	
	\medskip
	
	Let us first check that $S$ is $G$-invariant. For $g \in G$ and $v_1 , v_2 \in V$ we have $$ |\langle S_n^* S_n g v_1 , v_2 \rangle - \langle  S_n^* S_n v_1 , g^{-1} v_2 \rangle |  = | \langle S_n g v_1 , S_n v_2 \rangle - \langle S_n v_1 , S_n g^{-1} v_2 \rangle | \leq $$ $$ \leq |\langle (S_n g - g S_n) v_1 , S_n v _2 \rangle | + |\langle S_n v_1 , (g^{-1} S_n - S_n g^{-1}) v_2 \rangle |$$ and both summands in the last expression converge to $0$ as $n \to +\infty$ by condition $(2)$. Therefore $$ | \langle Sg v_1 , v_2 \rangle - \langle g S v_1 , v_2 \rangle | = \lim_{n \to +\infty} |\langle S_{m_n}^* S_{m_n} g v_1 , v_2 \rangle - \langle  S_{m_n}^* S_{m_n} v_1 , g^{-1} v_2 \rangle | = 0 $$ i.e. $\langle S g v_1 , v_2 \rangle = \langle g S v_1 , v_2 \rangle$. Thus, since $v_1$ and $v_2$ were arbitrary, $S g = g S$. This holds for all $g \in G$, i.e. $S$ is $G$-invariant.
	
	\medskip
	
	By Schur's lemma, we deduce $S = c \cdot \textnormal{Id}_V$ for some $c \in \bbC$. This translates precisely to (\ref{eq limit c}). The last claim is then straight-forward.
	
\end{proof}

\begin{remark}\label{rem alt cond}
	Using Lemma \ref{lem asymp emb uniform}, it is straight-forward that, assuming condition $(1)$ of Definition \ref{def asymp emb}, conditions $(2)$ and $(3)$ in Definition \ref{def asymp emb} are equivalent to the one condition that for $v_1 , v_2 \in V$ and a compact $K \subset G$ one has $$ \lim_{n \to +\infty} \sup_{g \in K} | \langle g S_n v_1 , S_n v_2 \rangle - \langle g v_1 , v_2 \rangle | = 0.$$ Indeed, let us write \begin{equation}\label{eq alt cond} \langle g S_n v_1 , S_n v_2 \rangle - \langle gv_1 , v_2 \rangle = \langle (g S_n - S_n g) v_1 , S_n v_2 \rangle + ( \langle S_n g v_1 , S_n v_2 \rangle - \langle gv_1 , v_2 \rangle).\end{equation} The current condition gives condition $(3)$ by plugging in $g = 1$, and then (\ref{eq alt cond}) gives condition $(2)$, using the uniformity provided by Lemma \ref{lem asymp emb uniform}. Conversely, (\ref{eq alt cond}) shows  immediately (again taking into consideration Lemma \ref{lem asymp emb uniform}) that conditions $(2)$ and $(3)$ imply the current condition.
\end{remark}

\subsection{}

The concept of o-weak containment in fact coincides with that of Zimmer-weak containment:

\begin{proposition}\label{prop o cont is weakly cont}
	Let $V$ and $W$ be unitary $G$-representations. Then $V$ is o-weakly contained in $W$ if and only if $V$ is Zimmer-weakly contained in $W$.
\end{proposition}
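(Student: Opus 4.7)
The forward direction (o-weakly contained implies Zimmer-weakly contained) is immediate from the equivalent reformulation in Remark \ref{rem alt cond}: given finitely many $v_1 , \ldots , v_r \in V$, a compact $K \subset G$ and $\epsilon > 0$, that reformulation gives
$$\lim_{n \to +\infty} \sup_{g \in K} |\langle g S_n v_i , S_n v_j \rangle - \langle g v_i , v_j \rangle | = 0$$
for each of the finitely many pairs $(i,j)$, so for $n$ large enough the vectors $w_i := S_n v_i \in W$ do the job.

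For the reverse direction, the plan is to build an asymptotic embedding directly using Zimmer-weak containment applied to larger and larger finite tuples over larger and larger compact sets, with a quantitative choice of accuracy. Fix a countable orthonormal basis $\{ e_i \}_{i \ge 1}$ of $V$ (available since $V$ is separable) and an exhaustion $K_1 \subset K_2 \subset \ldots$ of $G$ by compact subsets with $1 \in K_1$. For each $n \ge 1$, apply Zimmer-weak containment to the tuple $(e_1 , \ldots , e_n)$, to $K_n$, and to $\epsilon_n := 1/n^3$, to obtain vectors $w_1^{(n)}, \ldots , w_n^{(n)} \in W$ with
$$| \langle g w_i^{(n)} , w_j^{(n)} \rangle - \langle g e_i , e_j \rangle | \le 1/n^3 , \qquad 1 \le i , j \le n , \ g \in K_n .$$
Define $S_n \in \calB (V ; W)$ by $S_n e_i := w_i^{(n)}$ for $i \le n$, $S_n e_i := 0$ for $i > n$, extended by linearity and continuity.

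The three conditions of Definition \ref{def asymp emb} are then verified as follows. Specializing the displayed inequality at $g = 1$ and estimating the Frobenius norm of the $n \times n$ Hermitian matrix $[\langle w_i^{(n)} , w_j^{(n)} \rangle ]_{i,j} - I_n$ by $\sqrt{n^2 \cdot n^{-6}} = 1/n^2$, one obtains $\| S_n \|^2 \le 1 + 1/n^2 \le 2$, establishing condition (1) and justifying the continuous extension just used. By Remark \ref{rem alt cond} it then suffices, in lieu of conditions (2) and (3), to check that for all $v_1 , v_2 \in V$ and compact $K \subset G$ one has $\lim_{n \to +\infty} \sup_{g \in K} | \langle g S_n v_1 , S_n v_2 \rangle - \langle g v_1 , v_2 \rangle | = 0$. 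For basis vectors $v_1 = e_i$, $v_2 = e_j$ this is immediate, since eventually $K \subset K_n$ and the quantity is bounded by $1/n^3$. For general $v_1 , v_2$, approximate them by finite linear combinations of basis vectors and control the truncation error using the uniform bound $\| S_n \| \le \sqrt{2}$, the unitarity of the $G$-action, and the already-established basis case.

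I expect no serious conceptual obstacle. The only subtle point — and the one that dictates the choice $\epsilon_n = 1/n^3$ rather than merely $\epsilon_n \to 0$ — is to pass from entry-wise smallness of the Gram error matrix to a uniform operator-norm bound on $S_n$; this is the one place where some quantitative care is required.
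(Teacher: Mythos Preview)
Your proof is correct and follows essentially the same route as the paper's: both directions invoke Remark \ref{rem alt cond}, and for the reverse direction both build $S_n$ by applying Zimmer-weak containment to an initial segment of a fixed orthonormal basis over an exhausting sequence of compacts. The only cosmetic difference is in the operator-norm bound: the paper takes $\epsilon_n = 1/(n+1)$ and uses the Cauchy--Schwarz inequality $(\sum_{i \le n} |c_i|)^2 \le (n+1) \sum_{i \le n} |c_i|^2$ directly to get $\|S_n\|^2 \le 2$, whereas you take $\epsilon_n = 1/n^3$ and pass through the Frobenius norm of the Gram error matrix; either works, and in fact $\epsilon_n = 1/(n+1)$ already suffices for your Frobenius argument as well.
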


\begin{proof}
	Let $\{ S_n \}_{n \ge 0} \subset \calB (V ; W)$ be an asymptotic embedding. Given $v_1 , \ldots , v_r \in V$, by Remark \ref{rem alt cond}, given any compact $K \subset G$ we have $$ \lim_{n \to +\infty} \sup_{g \in K} |\langle g S_n v_i , S_n v_j \rangle - \langle g v_i , v_j \rangle | = 0$$ for all $1 \leq i,j \leq r$, and thus $$ \lim_{n \to +\infty} \sup_{g \in K} \sup_{1 \leq i,j \leq r} |\langle g S_n v_i , S_n v_j \rangle - \langle g v_i , v_j \rangle | = 0.$$ Thus by definition $V$ is Zimmer-weakly contained in $W$.
	
	\medskip
	
	Conversely, suppose that $V$ is Zimmer-weakly contained in $W$. Let $\{ e_n \}_{n \ge 0}$ be an orthonormal basis for $V$. Let $\{ K_n \}_{n \ge 0}$ be an increasing sequence of compact subsets in $G$, with $1 \in K_0$ and with the property that for any compact subset $K \subset G$ there exists $n \ge 0$ such that $K \subset K_n$. As $V$ is Zimmer-weakly contained in $W$, given $n \ge 0$, let us find $w^n_0 , \ldots , w^n_n \in W$ such that $$ \sup_{g \in K_n} |\langle g e_i , e_j \rangle - \langle g w^n_i , w^n_j \rangle | \leq \frac{1}{n+1}$$ for all $0 \leq i,j \leq n$. Define $S_n : V \to W$ by $$ S_n \left( \sum_{i \ge 0} c_i \cdot e_i \right) := \sum_{0 \leq i \leq n} c_i \cdot w^n_i.$$ We want to check that $\{ S_n \}_{n \ge 0}$ is an asymptotic embedding. As for condition $(1)$, notice that $$ \left\Vert S_n \left( \sum_{i \ge 0} c_i e_i \right) \right\Vert^2 = \left\Vert \sum_{0 \leq i \leq n} c_i w^n_i \right\Vert^2 = \left| \sum_{0 \leq i,j \leq n} c_i \overline{c_j} \cdot \langle w^n_i , w^n_j \rangle \right| \leq $$ $$ \leq  \left| \sum_{0 \leq i,j \leq n} c_i \overline{c_j} \cdot \langle e_i , e_j \rangle \right| + \left| \sum_{0 \leq i,j \leq n } c_i \overline{c_j} \cdot \left( \langle w_i^n , w_j^n \rangle - \langle e_i , e_j \rangle \right)\right|  \leq $$ $$ \leq \sum_{0 \leq i \leq n} |c_i|^2 + \frac{1}{n+1} \left( \sum_{0 \leq i \leq n} |c_i| \right)^2 \leq2  \sum_{0 \leq i \leq n} |c_i|^2 \leq 2 \cdot \left\Vert \sum_{i \ge 0} c_i e_i \right\Vert^2,$$ showing that $|| S_n ||^2 \leq 2$ for all $n \ge 0$. It is left to show the condition as in Remark \ref{rem alt cond}. Let us thus fix a compact $K \subset G$. Notice that it is straight-forward to see that it is enough to check the condition for vectors in a subset of $V$, the closure of whose linear span is equal to $V$. So it is enough to check that $$ \lim_{n \to +\infty} \sup_{g \in K} |\langle g S_n e_i , S_n e_j \rangle - \langle g e_i , e_j \rangle | = 0$$ for any given $i,j \ge 0$. Taking $n$ big enough so that $K \subset K_n$ and $n \ge \max \{ i,j \}$, we have $$ \sup_{g \in K} |\langle g S_n e_i , S_n e_j \rangle - \langle g e_i , e_j \rangle | = \sup_{g \in K} |\langle g w^n_i , w^n_j \rangle - \langle g e_i , e_j \rangle | \leq \frac{1}{n+1},$$ giving the desired.
\end{proof}

\begin{corollary}\label{cor o temp is temp}
	An irreducible unitary $G$-representation is o-tempered if and only if it is tempered.
\end{corollary}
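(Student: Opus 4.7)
The plan is to observe that this corollary is an immediate combination of the two preceding results, and simply chain the equivalences. Unlike the substantive content of Proposition \ref{prop o cont is weakly cont} (where the interesting direction required constructing the operators $S_n$ out of the matrix coefficient approximations via an orthonormal basis), here there is no new analytic input to produce.

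Concretely, I would unfold the definitions and assemble the chain. Let $V$ be an irreducible unitary $G$-representation. By definition of o-temperedness, $V$ is o-tempered if and only if it is o-weakly contained in $L^2(G)$. By Proposition \ref{prop o cont is weakly cont}, o-weak containment is the same thing as Zimmer-weak containment, so this is equivalent to $V$ being Zimmer-weakly contained in $L^2(G)$. Since $V$ is irreducible, part $(2)$ of Corollary \ref{cor temp Zimmer temp} (or equivalently the observation recorded in Remark \ref{rem irr zimmer}) identifies Zimmer-weak containment of $V$ in $L^2(G)$ with ordinary weak containment of $V$ in $L^2(G)$, which by definition is temperedness of $V$.

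There is essentially no obstacle: the proof is a one-line invocation of Proposition \ref{prop o cont is weakly cont} together with the irreducibility clause in Corollary \ref{cor temp Zimmer temp}. Irreducibility of $V$ is used only in the step passing between Zimmer-weak and ordinary weak containment; had $V$ merely been arbitrary, o-temperedness would correspond to Zimmer-weak containment in $L^2(G)$, which by part $(1)$ of Corollary \ref{cor temp Zimmer temp} matches weak containment in $L^2(G)^{\oplus \infty}$ rather than in $L^2(G)$ itself. So the only point worth flagging in the write-up is where irreducibility enters.
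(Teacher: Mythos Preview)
Your proposal is correct and matches the paper's own proof essentially verbatim: the paper simply invokes Proposition \ref{prop o cont is weakly cont} together with Remark \ref{rem irr zimmer}, which is exactly the chain you spell out. Your additional remark on where irreducibility enters is accurate and a nice clarification.
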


\begin{proof}
	This is a special case of Proposition \ref{prop o cont is weakly cont}, taking into account Remark \ref{rem irr zimmer}.
\end{proof}

\subsection{}

Here we give a weaker version of c-temperedness, which is technically convenient to relate to other concepts of this section.

\begin{definition}\label{def right-c-temp}
	Let $V$ be an irreducible unitary $G$-representation. Let $F_0 , F_1 , \ldots \subset G$ be a sequence of measurable pre-compact subsets all containing a neighbourhood of $1$. We say that $V$ is \textbf{right-c-tempered with F{\o}lner sequence $F_0 , F_1 , \ldots$} if there exists a unit vector $v_0 \in V$ such that the following two conditions are satisfied:
	\begin{enumerate}
		\item For all $v \in V$ we have $$ \limsup_{n \to +\infty} \frac{ \Mss{v}{v_0}{F_n} }{ \Mss{v_0}{v_0}{F_n} } < +\infty.$$
		\item For all $v \in V$ and all compact subsets $K \subset G$ we have $$ \lim_{n \to +\infty} \frac{\sup_{g \in K} \Mss{v}{v_0}{F_n \triangle F_n g} }{ \Mss{v_0}{v_0}{F_n} } = 0.$$
	\end{enumerate}
\end{definition}

\subsection{}

Finally, we can show that c-tempered irreducible unitary $G$-representations are tempered.

\begin{proposition}\label{prop right-c is op}
	Let $V$ be an irreducible unitary $G$-representation. Assume that $V$ is right-c-tempered (with some F{\o}lner sequence). Then $V$ is o-tempered. More precisely, suppose that $V$ is right-c-tempered with F{\o}lner sequence $F_0 , F_1 , \ldots$ and let $v_0 \in V$ be a unit vector for which the conditions (1) and (2) of Definition \ref{def right-c-temp} are satisfied. Then the sequence of operators $$ S_0 , S_1 , \ldots : V \to L^2 (G)$$ given by $$ S_n (v) (x) := \begin{cases} \frac{1}{\sqrt{ \Mss{v_0}{v_0}{F_n} }} \cdot \langle x v , v_0 \rangle , & x \in F_n \\ 0, & x \notin F_n \end{cases}$$ admits a sub-sequence which is an asymptotic embedding.
\end{proposition}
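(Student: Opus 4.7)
The plan is to verify that, after passing to a sub-sequence, the operators $\{ S_n \}$ satisfy the three conditions of Definition \ref{def asymp emb}; by Corollary \ref{cor o temp is temp} this already proves temperedness of $V$. Conditions (1) and (2) will follow directly (for the full sequence) from the two clauses of Definition \ref{def right-c-temp}, together with the Banach--Steinhaus theorem and unimodularity. For condition (3) I would invoke Lemma \ref{lem asymp emb irrep}, and then observe that the scaling factor in that lemma is automatically equal to $1$ because $\| S_n v_0 \| = 1$ for every $n$ by construction.

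For condition (1) of Definition \ref{def asymp emb}, one has directly
\[ \| S_n v \|^2 = \frac{\Mss{v}{v_0}{F_n}}{\Mss{v_0}{v_0}{F_n}},\]
so clause (1) of Definition \ref{def right-c-temp} yields $\limsup_n \| S_n v \|^2 < + \infty$ for each $v \in V$. Pointwise boundedness plus Banach--Steinhaus then gives a uniform bound $\| S_n \|^2 \leq C$. For condition (2), I would compute, using the standard right regular action $(\pi(g) f)(x) = f(xg)$ on $L^2(G)$, that
\[ \bigl( (\pi(g) S_n - S_n g) v \bigr)(x) = \frac{\langle xgv , v_0 \rangle}{\sqrt{\Mss{v_0}{v_0}{F_n}}} \bigl( \chi_{F_n g^{-1}}(x) - \chi_{F_n}(x) \bigr),\]
whose $L^2(G)$-norm-squared, after the change of variable $y = xg$ (legitimate by unimodularity), is exactly $\Mss{v}{v_0}{F_n \triangle F_n g} / \Mss{v_0}{v_0}{F_n}$. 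By clause (2) of Definition \ref{def right-c-temp} this tends to $0$ uniformly for $g$ in any fixed compact set. Combining with Cauchy--Schwarz and the uniform bound on $\| S_n \|$ then gives condition (2) of Definition \ref{def asymp emb}.

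Finally, to secure condition (3), I would apply Lemma \ref{lem asymp emb irrep}: since $V$ is irreducible and conditions (1) and (2) of Definition \ref{def asymp emb} are in place, there exist a sub-sequence $\{ S_{m_n} \}$ and a constant $c \geq 0$ such that $\langle S_{m_n} v_1 , S_{m_n} v_2 \rangle \to c \langle v_1 , v_2 \rangle$ for all $v_1 , v_2 \in V$. Plugging in $v_1 = v_2 = v_0$ and using that $\| S_n v_0 \|^2 = 1$ (the key structural feature of the normalization) forces $c = 1$, so no rescaling is needed and $\{ S_{m_n} \}$ is itself an asymptotic embedding. The only genuinely computational point is the change-of-variables bookkeeping in the verification of condition (2); once that is done, the rest is a direct assembly of right-c-temperedness, Banach--Steinhaus, and Schur's lemma (packaged inside Lemma \ref{lem asymp emb irrep}).
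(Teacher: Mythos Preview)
Your proof is correct and follows essentially the same route as the paper's: Banach--Steinhaus for condition (1), a direct computation showing $\|(gS_n - S_n g)v\|^2 = \Mss{v}{v_0}{F_n \triangle F_n g}/\Mss{v_0}{v_0}{F_n}$ for condition (2), and Lemma \ref{lem asymp emb irrep} together with $\|S_n v_0\|^2 = 1$ for condition (3). The only cosmetic difference is that the paper bounds $\|S_n(gv) - gS_n(v)\|$ by pairing against an arbitrary unit vector $f \in L^2(G)$ and applying Cauchy--Schwarz inside the integral, whereas you compute the $L^2$-norm directly from the characteristic-function formula; these yield the same bound.
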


\begin{corollary}\label{cor c-temp are temp}
	Every c-tempered irreducible unitary $G$-representation (with some F{\o}lner sequence) is tempered.
\end{corollary}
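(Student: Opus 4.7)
The plan is to chain together the implications that have been set up in this section. The target says c-tempered implies tempered, and we already have (i) Proposition \ref{prop right-c is op}, which says right-c-tempered implies o-tempered, and (ii) Corollary \ref{cor o temp is temp}, which says o-tempered and tempered are equivalent for irreducible representations. So it suffices to show that c-tempered (with some Følner sequence) implies right-c-tempered (with the same Følner sequence).

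The key observation is simply that right-c-temperedness, as formulated in Definition \ref{def right-c-temp}, is a specialization of c-temperedness in Definition \ref{def c-temp}: condition (1) of Definition \ref{def right-c-temp} is the case $v_1 = v$, $v_2 = v_0$ of condition (1) of Definition \ref{def c-temp}, while condition (2) of Definition \ref{def right-c-temp} is the case $v_1 = v$, $v_2 = v_0$, $g_2 = 1$ of condition (2) of Definition \ref{def c-temp} (noting that then $g_2^{-1} F_n g_1 = F_n g_1$). Thus, taking $v_0 \in V$ to be the witnessing unit vector for c-temperedness, the very same $v_0$ witnesses right-c-temperedness for the same Følner sequence $F_0, F_1, \ldots$.

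I would then conclude: by the above, $V$ is right-c-tempered; by Proposition \ref{prop right-c is op}, $V$ is o-tempered; by Corollary \ref{cor o temp is temp} (applicable since $V$ is irreducible), $V$ is tempered.

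There is essentially no obstacle here — the whole content has been packaged into the earlier statements of the section. The only thing one has to notice is that the two-sided symmetric difference $F_n \triangle g_2^{-1} F_n g_1$ in Definition \ref{def c-temp} visibly specializes to the one-sided $F_n \triangle F_n g_1$ used in Definition \ref{def right-c-temp} by setting $g_2 = 1$, which is harmless since $K$ is an arbitrary compact subset of $G$ and the sup over $g_2 \in K$ only makes the condition stronger.
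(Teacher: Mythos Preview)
Your proof is correct and follows exactly the same chain of implications as the paper: c-tempered $\Rightarrow$ right-c-tempered (trivially, by specialization) $\Rightarrow$ o-tempered (Proposition \ref{prop right-c is op}) $\Rightarrow$ tempered (Corollary \ref{cor o temp is temp}). The paper's proof is a one-sentence version of what you wrote.
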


\begin{proof}
	It is clear that c-temperedness implies right-c-temperedness, Proposition \ref{prop right-c is op} says that right-c-temperedness implies o-temperedness, and Corollary \ref{cor o temp is temp} says that o-temperedness is equivalent to temperedness.
\end{proof}

\begin{proof}[Proof (of Proposition \ref{prop right-c is op}).]

	Clearly each $S_n$ is bounded. By condition (2) of Definition \ref{def right-c-temp}, for any fixed $v \in V$ there exists $C>0$ such that $||S_n (v)||^2 \leq C$ for all $n$. By the Banach-Steinhaus theorem, this implies that the operators $S_0 , S_1 , \ldots$ are jointly bounded, thus condition $(1)$ of Definition \ref{def asymp emb} is verified.
	
	\medskip
	
	To verify condition $(2)$ of Definition \ref{def asymp emb}, fix $v \in V$ and a compact $K \subset G$. Given $g \in K$ and a function $f \in L^2 (G)$ of $L^2$-norm one, we have $$ | \langle S_n (g v) - g S_n (v) , f \rangle | = \frac{ \left| \int_{F_n} \langle x g v , v_0 \rangle \overline{f(x)} \cdot dx - \int_{F_n g^{-1}} \langle xg v , v_0 \rangle \overline{f(x)} \cdot dx \right| }{\sqrt{ \Mss{v_0}{v_0}{F_n} }} \leq $$ $$ \leq \frac{ \int_{F_n \triangle F_n g^{-1}} \left| \langle xg v , v_0 \rangle  \overline{f(x)} \right| \cdot dx }{\sqrt{ \Mss{v_0}{v_0}{F_n} }} \leq \sqrt{\frac{\int_{F_n \triangle F_n g^{-1}} |\langle xg v , v_0 \rangle |^2 \cdot dx}{ \Mss{v_0}{v_0}{F_n} }} \cdot \sqrt{\int_{G} |f(x)|^2 \cdot dx} = $$ $$ = \sqrt{\frac{ \Mss{v}{v_0}{F_n g \triangle F_n} }{ \Mss{v_0}{v_0}{F_n} }}.$$ Since $f$ was arbitrary, this implies $$ || S_n (gv) - g S_n (v) || \leq \sqrt{\frac{ \Mss{v}{v_0}{F_n g \triangle F_n} }{ \Mss{v_0}{v_0}{F_n} }}$$ for $g \in K$. By condition (2) of Definition \ref{def right-c-temp}, this tends to $0$ as $n \to +\infty$, uniformly in $g \in K$, and hence the desired.
	
	\medskip
	
	Now, using Lemma \ref{lem asymp emb irrep} we see that some sub-sequence will satisfy condition $(3)$ of Definition \ref{def asymp emb}, once we notice that $|| S_n v_0 ||^2 = 1$ for all $n$ by construction.
	
\end{proof}

\section{The case of $K$-finite vectors}\label{sec Kfin}

In this section $G$ is a semisimple group over a local field. We continue with notations from \S\ref{sec intro}. The purpose of this section is to prove Theorem \ref{thm main Kfin}.

\subsection{}

Let us first show that, when $G$ is non-Archimedean, it is enough to establish condition (2) of Theorem \ref{thm main Kfin}, and condition (1) will then follow. So we assume condition (2) and use the notation $C(v_1 , v_2)$ therein.

\medskip

Let us denote by $\underline{V} \subset V$ the subspace of $K$-finite (i.e. smooth) vectors. By the polarization identity, it is clear that for all $v_1 , v_2 , v_3 , v_4 \in \underline{V}$ the limit $$ \lim_{r \to +\infty} \frac{\int_{G_{<r}} \langle gv_1 , v_2 \rangle \overline{\langle gv_3 , v_4 \rangle} \cdot dg}{r^{\mathbf{d} (V)}}$$ exists, let us denote it by $D(v_1 , v_2 , v_3 , v_4)$, and $D$ is a quadlinear form $$D : \underline{V} \times \overline{\underline{V}}  \times  \overline{\underline{V}}  \times \underline{V} \to \bbC.$$

\medskip

Next, we claim that for all $v_1 , v_2 , v_3 , v_4 \in \underline{V}$ and all $g_1 , g_2 \in G$ we have $$ D(g_1 v_1, g_2 v_2 , g_1 v_3 , g_2 v_4) = D(v_1 , v_2 , v_3 , v_4).$$ Indeed, again by the polarization identity, it is enough to show that for all $v_1 , v_2 \in \underline{V}$ and all $g_1 , g_2 \in G$ we have \begin{equation}\label{eq Kfin nonArch C is G inv} C(g_1 v_1 , g_2 v_2) = C(v_1 , v_2).\end{equation} There exists $r_0 \ge 0$ such that $$ G_{<r-r_0} \subset g_2^{-1} G_{<r} g_1 \subset G_{<r+r_0}.$$ We have: $$ \int_{G_{<r}} |\langle g g_1 v_1 , g_2 v_2 \rangle |^2 \cdot dg = \int_{g_2^{-1} G_{<r} g_1} |\langle gv_1 , v_2 \rangle |^2 \cdot dg$$ and therefore $$ \int_{G_{<r-r_0}} |\langle g v_1 , v_2 \rangle |^2 \cdot dg \leq \int_{G_{<r}} |\langle g g_1 v_1 , g_2 v_2 \rangle |^2 \cdot dg \leq \int_{G_{<r+r_0}} |\langle gv_1 , v_2 \rangle |^2 \cdot dg.$$ Dividing by $r^{\mathbf{d} (V)}$ and taking the limit $r \to +\infty$ we obtain (\ref{eq Kfin nonArch C is G inv}).

\medskip

Now, by Schur's lemma (completely analogously to the reasoning with $\Phi$ in the proof of Proposition \ref{prop c-tempered orth rel}), we obtain that for some $C >0$ we have $$ D(v_1 , v_2 , v_3 , v_4) = C \cdot \langle v_1 , v_ 3 \rangle \overline{\langle v_2 , v_4 \rangle}$$ for all $v_1 , v_2 , v_3 , v_4 \in \underline{V}$.

\subsection{}

Thus, we aim at establishing condition (2) of Theorem \ref{thm main Kfin} in either the non-Archimedean or the Archimedean cases. Since a complex group can be considered as a real group and the formulation of the desired theorem will not change, we assume that we are either in the real case or in the non-Archimedean case.

\medskip

Also, notice that to show Theorem \ref{thm main Kfin} for all maximal compact subgroups it is enough to show it for one maximal compact subgroup (in the non-Archimedean case because the resulting notion of $K$-finite vectors does not depend on the choice of $K$ and in the real case since all maximal compact subgroups are conjugate).

\subsection{}

Let us fix some notation. We choose a maximal split torus $A \subset G$ and a minimal parabolic $P \subset G$ containing $A$. We denote $$ \fraka := \Hom_{\bbZ} (X^* (A) , \bbR).$$ We let $L \subset \fraka$ to be $\fraka$ itself in the real case and the lattice in $\fraka$ corresponding to $X_* (A)$ in the non-Archimedean case. We let $\exp : L \to A$ be the exponential map constructed in the usual way:
\begin{itemize}
	\item If $G$ is real, we let $\exp$ to be the composition $L = \fraka \cong {\rm Lie} (A) \to A$ where the last map is the exponential map from the Lie algebra to the Lie group, while the isomorphism is the identification resulting from the map $X^* (A) \to {\rm Lie} (A)^*$ given by taking the differential at $1 \in A$.
	\item If $G$ is non-Archimedean, we let $\exp$ be the composition $L \cong X_* (A) \to A$ where the last map is given by sending $\chi$ to $\chi (\varpi^{-1})$, where $\varpi$ is a uniformizer.
\end{itemize}
We denote by $$ \Delta \subset \widetilde{\Delta} \subset X^* (A) \subset \fraka^*$$ the set of simple roots $\Delta$ and the set of positive roots $\widetilde{\Delta}$ (resulting from the choice of $P$). We identify $\fraka$ with $\bbR^{\Delta}$ in the clear way. We set $$ \fraka^+ := \{ x \in \fraka \ | \ \alpha (x) \ge 0 \ \forall \alpha \in \Delta \}$$ and $L^+ := L \cap \fraka^+$.

\medskip

Let us in the standard way choose a maximal comapct subgroup $K \subset G$ ``in good relative position" with $A$. In the real case this means ${\rm Lie} (A)$ sitting in the $(-1)$-eigenspace of a Cartan involution whose $1$-eigenspace is ${\rm Lie} (K)$ and in the non-Archimedean case it is as in \cite[V.5.1., Th{\' e}or{\` e}me]{Re}. In the non-Archimedean case let us also, to simplify notation, assume that $G = K \exp (L^+) K$ (in general there is a finite subset $S \subset Z_G (A)$ such that $G = \coprod_{s \in S} K \exp (L^+) s K$ and one proceeds with the obvious modifications).

\medskip

Let us denote $\rho := \frac{1}{2}\sum_{\alpha \in \widetilde{\Delta}} \mu_{\alpha} \cdot \alpha \in \fraka^*$ where $\mu_{\alpha} \in \bbZ_{\ge 1}$ is the multiplicity of the root $\alpha$.

\medskip

Fixing Haar measures, especially denoting by $dx$ a Haar measure on $L$, we have a uniquely defined continuous $\omega : L^+ \to \bbR_{\ge 0}$ such that the following integration formula holds: $$ \int_{G} f(g) \cdot dg = \int_{K \times K} \left( \int_{L^+} \omega (x) f(k_1 \exp (x) k_2) \cdot dx \right) \cdot  dk_1 dk_2.$$ Regarding the behaviour of $\omega (x)$, we can use \cite[around Lemma 1.1]{Ar} as a reference. In the real case there exists $C>0$ such that
\begin{equation}\label{eq omega is like rho Arch}
\frac{\omega (x)}{e^{2\rho (x)}} = C \cdot \prod_{\alpha} \left( 1 - e^{-2\alpha (x)} \right)
\end{equation} where $\alpha$ runs over $\widetilde{\Delta}$ according to multiplicities $\mu_{\alpha}$. In the non-Archimedean case, for every $\Theta \subset \Delta$ there exists $C_{\Theta} > 0$ such that
\begin{equation}\label{eq omega is like rho nonArch}
\frac{\omega (x)}{e^{2\rho (x)}} = C_{\Theta}
\end{equation} for all $x \in L^+$ satisfying $\alpha (x) = 0$ for all $\alpha \in \Theta$ and $\alpha (x) \neq 0$ for all $\alpha \in \Delta \smallsetminus \Theta$.

\medskip

Since, by Claim \ref{clm doesnt depend on norm}, we are free in our choice of the norm $||-||$ on $\frakg$, let us choose $||-||$ to be a supremum norm in coordinates gotten from an $A$-eigenbasis. Then\footnote{Recall the notation $\mathbf{r}$ from \S\ref{sec intro}.} $$ \mathbf{r} (\exp (x)) = \log q \cdot \max_{\alpha \in \widetilde{\Delta} } |\alpha (x)|$$ where $q$ is the residual cardinality in the non-Archimedean case and $q := e$ in the real case. Let us denote $$ \fraka_{<r} := \{ x \in \fraka \ | \ | \alpha (x)| < \tfrac{r}{\log q} \ \forall \alpha \in \widetilde{\Delta} \}$$ and $\fraka^+_{<r} := \fraka_{<r} \cap \fraka^+$ and similarly $L_{<r} := L \cap \fraka_{<r}$, $L^+_{<r} := L^+ \cap L_{<r}$. Then $ L_{<r} = \exp^{-1} (G_{<r})$. Hence there exists $r_0 \ge 0$ such that
\begin{equation}\label{eq Gr is like Lr}
K \exp (L^+_{<r-r_0}) K \subset G_{<r} \subset K \exp(L^+_{<r+r_0}) K.
\end{equation}

\subsection{}

Let now $V$ be a tempered irreducible unitary $G$-representation. Let us denote by $\underline{V} \subset V$ the subspace of $K$-finite vectors. Given $v_1 , v_2 \in V$, we will denote by $f_{v_1 , v_2}$ the continuous function on $L^+$ given by $$f_{v_1 , v_2} (x) := e^{\rho (x)} \langle \exp (x) v_1 , v_2 \rangle.$$ We have $$ \Ms{v_1}{v_2}{r} = \int_{G_{<r}} |\langle gv_1 , v_2 \rangle |^2 \cdot dg = $$ $$= \int_{K \times K} \left( \int_{L^+ \cap \exp^{-1} (k_2 G_{<r} k_1^{-1})} \frac{\omega (x)}{e^{2 \rho (x)}} |f_{k_1 v_1 , k_2 v_2} (x)|^2 \cdot dx \right) \cdot dk_1 dk_2.$$ In view of (\ref{eq Gr is like Lr}), in order to prove Theorem \ref{thm main Kfin} it is enough to show:

\begin{claim}\label{clm Kfin ess}
	There exists $\mathbf{d} (V) \in \bbZ_{\ge 0}$ such that for every non-zero $v_1 , v_2 \in \underline{V}$ there exists $C(v_1 , v_2)>0$ such that $$ \lim_{r \to +\infty} \frac{\int_{K \times K} \left( \int_{L^+_{<r}} \frac{\omega (x)}{e^{2 \rho (x)}}| f_{k_1 v_1 , k_2 v_2} (x) |^2 \cdot dx \right) \cdot dk_1 dk_2}{r^{\mathbf{d} (V)}} = C(v_1 , v_2).$$
\end{claim}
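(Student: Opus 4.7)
The plan is to use the Harish-Chandra (Archimedean) / Casselman (non-Archimedean) asymptotic expansion of $K$-finite matrix coefficients of a tempered representation. For $v_1, v_2 \in \underline{V}$ and $x$ in the interior of the face of $\fraka^+$ indexed by $\Theta \subseteq \Delta$, this expansion has the form
\[
  f_{v_1,v_2}(x) \;=\; \sum_{\lambda \in E_\Theta} p_\lambda^\Theta(v_1,v_2;\,x)\, e^{i\lambda(x)} \;+\; (\text{exponentially decaying tail transverse to the face}),
\]
where $E_\Theta \subset \fraka^*$ is a finite set of \emph{real} linear forms (reality being the translation of temperedness via Casselman's criterion and the Langlands classification), each $p_\lambda^\Theta$ is a polynomial in $x$ which is bilinear in $(v_1, v_2)$, and the tail decays like $e^{-c\,\mathrm{dist}(x,\text{face})}$ for some $c>0$. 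I would then define $\mathbf{d}(V) := |\Delta \setminus \Theta_0|$, where $\Theta_0 \subseteq \Delta$ is determined by the Langlands data: $V$ is an irreducible subrepresentation of a unitarily induced $\mathrm{Ind}_{P_{\Theta_0}}^G(\sigma)$ with $\sigma$ essentially square-integrable modulo the split center of the Levi $M_{\Theta_0}$. This specializes to $\mathbf{d}(V)=0$ precisely when $V$ is square-integrable, to the split rank for the trivial representation of an amenable $G$, and to $1$ for unitary principal series of $SL_2$.

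With the expansion in hand, the next step is to substitute into $|f_{v_1,v_2}|^2$ to obtain
\[
  |f_{v_1,v_2}(x)|^2 \;=\; \sum_{\lambda,\mu \in E_\Theta} p_\lambda^\Theta\,\overline{p_\mu^\Theta}\, e^{i(\lambda-\mu)(x)} + o(1),
\]
and to integrate against $\omega(x)/e^{2\rho(x)}\,dx$ over $L^+_{<r}$, decomposing this domain into a ``deep interior'' (where every $\alpha(x)$ exceeds a slowly growing function of $r$) and tubular neighborhoods of the lower-dimensional faces. Formulas (\ref{eq omega is like rho Arch}) and (\ref{eq omega is like rho nonArch}) give explicit control of the weight $\omega/e^{2\rho}$ on each region. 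The diagonal terms ($\lambda = \mu$) contribute, after averaging over $K \times K$, a quantity of order exactly $r^{\mathbf{d}(V)}$ whose normalized limit is the sought-after $C(v_1,v_2)$; the off-diagonal terms oscillate and contribute of order $o(r^{\mathbf{d}(V)})$ by repeated integration by parts (Archimedean) or summation by parts (non-Archimedean) in a direction where $\lambda - \mu \neq 0$. Contributions from tubular neighborhoods of faces of codimension $\ge 1$ are of size $O(r^{\mathbf{d}(V)-1}\log^N r)$ and are absorbed into the error. Positivity of $C(v_1,v_2)$ for nonzero $v_1, v_2$ reduces to the nonvanishing of at least one leading polynomial $p_\lambda^\emptyset$; were all to vanish, $f_{v_1,v_2}$ would decay strictly faster than tempered and force $V$ to be square-integrable by Casselman's criterion, in which case the statement reduces to the standard Schur orthogonality for a nonzero matrix coefficient of a square-integrable representation.

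The hardest part will be controlling the oscillatory off-diagonal terms uniformly over $L^+_{<r}$ --- which is a non-rectangular polytope, not a cube --- together with uniformly bounding the tail of the asymptotic expansion across all faces. In the non-Archimedean case, ordinary integration by parts is replaced by an Abel-type summation argument on the lattice $L$, which interacts delicately with the face decomposition and with the discrete geometry of $L^+_{<r}$. Arranging for all implicit constants to depend only on the finite-dimensional $K$-isotypic component containing $v_1, v_2$ (so that the $K \times K$ averaging is legitimate), and verifying that the final answer is independent of the choice of norm on $\frakg$ as required by Claim \ref{clm doesnt depend on norm}, should then be essentially formal once the analytic estimates above have been secured.
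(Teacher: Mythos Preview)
Your overall strategy---substitute the Casselman (resp.\ Casselman--Mili\v{c}i\'{c}) asymptotic expansion into $|f_{v_1,v_2}|^2$, integrate over $L^+_{<r}$, and show that the oscillatory off-diagonal terms are of lower order---matches the paper's approach, which packages exactly this computation into Claim~\ref{clm growth exists} together with the appendix Claims~\ref{clm appb lat} and~\ref{clm appb}. Two issues deserve comment.

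First, your proposed formula $\mathbf{d}(V)=|\Delta\smallsetminus\Theta_0|$ is not what the paper uses and is not always correct. The paper defines $\mathbf{d}(V):=\sup_{v_1,v_2}\mathbf{d}(v_1,v_2)$, where $\mathbf{d}(v_1,v_2)$ is read off from the expansion via the quantity $d(\lambda,m)=\sum_{j\in J_\lambda}(1+m_j)$ of the appendix: the polynomial factors in the expansion (which appear when exponents coincide, e.g.\ at singular inducing parameter) contribute to $\mathbf{d}(V)$ and can push it above $|\Delta\smallsetminus\Theta_0|$. Your formula recovers only the generic case.

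Second---and this is the real gap---your positivity argument does not work. You assert that if all leading polynomials $p^{\emptyset}_\lambda(v_1,v_2;\,\cdot\,)$ vanish then $V$ is forced to be square-integrable; but this vanishing is a condition on a \emph{single} matrix coefficient (or, after $K\times K$ averaging, on the finitely many $f_{k_1 v_1, k_2 v_2}$ with $k_i$ in the $K$-span), and says nothing about matrix coefficients of other vectors. The representation does not become square-integrable because one $K$-isotypic pair happens to have accidentally fast-decaying coefficients. What must actually be shown is that for \emph{every} nonzero $v_1,v_2\in\underline{V}$ the maximal degree $\mathbf{d}(V)$ is already achieved by $\mathbf{d}(k_1 v_1, k_2 v_2)$ for some $k_1,k_2\in K$. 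The paper secures this by a separate argument: in the non-Archimedean case one uses that $G\cdot v_i$ spans $\underline{V}$ together with the two-sided quasi-invariance $g_2^{-1}G_{<r-r_0}g_1\subset G_{<r}\subset g_2^{-1}G_{<r+r_0}g_1$; in the Archimedean case one uses that $\calU(\frakn^-)\calU(\fraka)K v_1$ and $\calU(\frakn)\calU(\fraka)K v_2$ span $\underline{V}$, combined with the monotonicity $\mathbf{d}(v_1, X v_2)\le\mathbf{d}(v_1,v_2)$ for $X\in\frakg$ (recorded as Claim~\ref{clm growth exists}(2)). This transfer of the maximal degree from an arbitrary pair to the given one is the actual content of the deduction of Claim~\ref{clm Kfin ess} from Claim~\ref{clm growth exists}, and it is not replaced by the heuristic you propose.
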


\subsection{} We have the following:

\begin{claim}\label{clm growth exists}\
	\begin{enumerate}
		\item Given $v_1 , v_2 \in \underline{V}$, either $f_{v_1 , v_2} = 0$ in which case we set $\mathbf{d} (v_1 , v_2) := -\infty$, or there exist $\mathbf{d} (v_1 , v_2) \in \bbZ_{\ge 0}$ and $C(v_1 , v_2)>0$ such that $$ \lim_{r \to +\infty} \frac{1}{r^{\mathbf{d} (v_1 , v_2)}} \int_{L^+_{<r}} \frac{\omega (x)}{e^{2 \rho (x)}} |f_{v_1 , v_2} (x)|^2 \cdot dx = C(v_1 , v_2).$$
		\item In the real case, we have $\mathbf{d} (v_1 , X v_2) \leq \mathbf{d} (v_1 , v_2)$ for all $v_1 , v_2 \in \underline{V}$ and $X \in \frakg$.
		\item Denoting $\mathbf{d}(V) := \sup_{v_1 , v_2 \in \underline{V}} \mathbf{d} (v_1 , v_2)$, we have neither $\mathbf{d} (V) = -\infty$ nor $\mathbf{d} (V) =+\infty$ (i.e. $\mathbf{d} (V) \in \bbZ_{\ge 0}$).
	\end{enumerate}
\end{claim}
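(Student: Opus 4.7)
The strategy combines Harish-Chandra's asymptotic expansion of $K$-finite matrix coefficients on the positive chamber with an elementary analysis of expanding-cone integrals of polynomial-exponential functions.

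The starting point for (1) is Harish-Chandra's (real case), respectively its analog via Casselman's canonical pairing (non-Archimedean case), asymptotic expansion: for $v_1, v_2 \in \underline{V}$ there is a finite set of real tempered exponents $\{\lambda_j\}_{j \in J} \subset \fraka^*$ and polynomials $p_j(x)$ on $\fraka$ (depending sesquilinearly on $(v_1,v_2)$) such that
\begin{equation*}
f_{v_1,v_2}(x) = \sum_{j \in J} p_j(x)\, e^{i\lambda_j(x)} + R(x)
\end{equation*}
on $\fraka^+$, with remainder $R$ decaying exponentially. Squaring and combining with $\omega(x) e^{-2\rho(x)}$, which by (\ref{eq omega is like rho Arch})--(\ref{eq omega is like rho nonArch}) is bounded and tends to a constant in the interior of $\fraka^+$, the problem reduces to evaluating
\begin{equation*}
I_\mu(q,r) := \int_{L^+_{<r}} q(x)\, e^{i\mu(x)}\, dx
\end{equation*}
for polynomials $q$ on $\fraka$ and real linear forms $\mu \in \fraka^*$. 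For $\mu = 0$, a direct computation on the expanding region $L^+_{<r}$ yields $I_0(q,r) = c_q\, r^{\dim\fraka + \deg q}(1+o(1))$. For $\mu \neq 0$, integration by parts in a direction transverse to $\ker \mu$ (real case), or Abel summation against the non-trivial character $e^{i\mu}$ on the lattice (non-Archimedean case), yields $I_\mu(q,r) = o(r^{\dim\fraka + \deg q})$. Only the ``diagonal'' pairs $\lambda_j = \lambda_k$ in the expansion of $|f_{v_1,v_2}|^2$ contribute to leading order, giving $\int_{L^+_{<r}} \omega\, e^{-2\rho}\, |f_{v_1,v_2}|^2\, dx = C(v_1,v_2)\, r^{\mathbf{d}(v_1,v_2)}(1+o(1))$.

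For (3), Harish-Chandra's $\Xi$-bound for tempered representations says $|\langle g v_1, v_2\rangle| \leq C_{v_1,v_2} \Xi_G(g)$, with $\Xi_G(\exp(x)) \leq C e^{-\rho(x)}(1+\|x\|)^N$ on $\exp(\fraka^+)$ for some $N$ depending only on $G$. Combined with boundedness of $\omega\, e^{-2\rho}$, this gives $\int_{L^+_{<r}} \omega\, e^{-2\rho}\, |f_{v_1,v_2}|^2\, dx = O(r^{2N+\dim\fraka})$, hence the uniform bound $\mathbf{d}(v_1,v_2) \leq 2N + \dim\fraka$ and therefore $\mathbf{d}(V) < +\infty$. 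For $\mathbf{d}(V) \neq -\infty$, take any non-zero $v \in \underline{V}$: continuity of $f_{v,v}$ and the equality $f_{v,v}(0) = \|v\|^2 > 0$ force $f_{v,v} \not\equiv 0$, so by (1) one has $\mathbf{d}(v,v) \in \bbZ_{\geq 0}$, and hence $\mathbf{d}(V) \geq 0$.

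For (2), in the real case, combining skew-adjointness of $\pi(X)$ with the identity $\pi(X)\pi(\exp(x)) = \pi(\exp(x))\pi(\mathrm{Ad}(\exp(-x))X)$ and the root-space decomposition $X = \sum_\beta X_\beta$ (where $\beta$ ranges over $\{0\} \cup \widetilde{\Delta} \cup (-\widetilde{\Delta})$) gives
\begin{equation*}
f_{v_1, Xv_2}(x) = -\sum_\beta e^{-\beta(x)}\, f_{X_\beta v_1,\, v_2}(x).
\end{equation*}
Substituting the Harish-Chandra expansion of each $f_{X_\beta v_1, v_2}$ and regrouping by tempered exponent, the polynomial coefficient at each leading $\lambda$ in $f_{v_1, Xv_2}$ is assembled from coefficients in $\{f_{X_\beta v_1, v_2}\}$ rescaled by factors $e^{-\beta(x)}$ that are bounded for $\beta \in \{0\} \cup \widetilde{\Delta}$; the potentially divergent contributions from $\beta \in -\widetilde{\Delta}$ must cancel since the left-hand side is polynomially bounded by temperedness. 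Thus no polynomial degree is increased in the expansion, giving $\mathbf{d}(v_1, Xv_2) \leq \mathbf{d}(v_1, v_2)$.

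The main obstacle I anticipate is the oscillatory-integral estimate $I_\mu(q,r) = o(r^{\dim\fraka + \deg q})$ for $\mu \neq 0$ in (1): the truncated cone $L^+_{<r}$ has non-smooth boundary (intersection of Weyl-chamber walls with the box $\fraka_{<r}$), so integration by parts produces boundary integrals over lower-dimensional facets that must be estimated recursively and shown to be of strictly smaller order. A secondary difficulty in (2) is the cancellation of the exponentially growing terms corresponding to $\beta \in -\widetilde{\Delta}$; making this precise requires tracking the Harish-Chandra expansion through the right $\frakg$-action, relying on the fact that the expansion is a canonical feature of the $(\frakg, K)$-module $\underline{V}$.
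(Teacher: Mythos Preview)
Your overall strategy---asymptotic expansion of $K$-finite matrix coefficients followed by growth analysis of polynomial-exponential integrals over the truncated cone---matches the paper's. The paper carries this out via the appendix Claims~A.3 (lattice) and~A.6 (integral), which are precisely the elaborated versions of your $I_\mu(q,r)$ estimates, and your argument for~(3) is essentially the same.

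There is, however, a genuine gap in your form of the asymptotic expansion. You write $f_{v_1,v_2}(x)=\sum_j p_j(x)e^{i\lambda_j(x)}+R(x)$ with $\lambda_j$ real and $R$ exponentially decaying on $\fraka^+$. This is correct only when every leading exponent of $V$ is purely unitary, i.e.\ when $V$ lies in the minimal principal series. For a general tempered $V$---say a summand of $\textnormal{Ind}_P^G(\sigma\otimes e^{i\nu})$ with $P$ non-minimal and $\sigma$ square-integrable on the Levi---the leading exponents $\mu$ satisfy $\textnormal{Re}(\mu+\rho)\le 0$ on $\fraka^+$ but typically have $\textnormal{Re}(\mu+\rho)=0$ along the $\fraka_M$-direction and strictly negative along the complementary $\fraka^M$-direction. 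Such terms are neither purely oscillatory nor exponentially decaying on all of $\fraka^+$; they contribute growth of order $r^{\dim\fraka_M}$ (times polynomial factors), which your dichotomy cannot detect. The paper's remedy is the stratification by subsets $\Theta\subset\Delta$: on each region $[0,k]^\Theta\times\bbR_{\ge k}^{\Delta\smallsetminus\Theta}$ one invokes the Casselman--Mili\v{c}i\'c expansion (real case) or Casselman's Theorem~4.3.3 (non-Archimedean case), allowing arbitrary exponents $\lambda_i\in\bbC_{\le 0}^{\Delta\smallsetminus\Theta}$ with coefficient functions that are holomorphic in $e^{-x_j}$, and the growth exponent is then $d(\lambda,m)=\sum_{j:\,\textnormal{Re}(\lambda_j)=0}(1+m_j)$, summed only over the oscillatory coordinates. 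Without this refinement your computation of $\mathbf{d}(v_1,v_2)$ is incorrect precisely in the cases where $\mathbf{d}(V)<|\Delta|$.

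A smaller point on~(2): your identity $f_{v_1,Xv_2}(x)=-\sum_\beta e^{-\beta(x)}f_{X_\beta v_1,v_2}(x)$ is correct, but the ``must cancel by temperedness'' step for $\beta\in-\widetilde{\Delta}$ is not an argument; one needs to know that the asymptotic expansion data of $f_{v_1,Xv_2}$ are obtained from those of $f_{v_1,v_2}$ by a first-order differential operator that does not raise polynomial degree or shift exponents upward. The paper extracts this directly from the concrete description of $d_{v_1,v_2,\Theta}$ furnished by Casselman--Mili\v{c}i\'c, i.e.\ from the $(\frakg,K)$-module structure on the space of expansion coefficients.
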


Let us establish Claim \ref{clm Kfin ess} given Claim \ref{clm growth exists}:

\begin{proof}[Proof (of Claim \ref{clm Kfin ess} given Claim \ref{clm growth exists}).]
	Let us first handle the non-Archimedean case. Let us notice that we can replace $v_1$ and $v_2$ by $g_1 v_1$ and $g_2 v_2$ for any $g_1,g_2 \in G$. Indeed, for some $r_0 \ge 0$ we have $$g_2^{-1} G_{<r-r_0} g_1 \subset G_{<r} \subset g_2^{-1} G_{<r+r_0} g_1$$ and thus $$ \int_{G_{<r-r_0}} | \langle g g_1 v_1 , g_2 v_2 \rangle |^2 \cdot dg \leq \int_{G_{<r}} |\langle g v_1 , v_2 \rangle |^2 \cdot dg \leq \int_{G_{<r+r_0}} | \langle g g_1 v_1 , g_2 v_2 \rangle |^2 \cdot dg,$$ from which the claim clearly follows. Since $G \cdot v_1$ spans $\underline{V}$ and $G \cdot v_2$ spans $\underline{V}$, we deduce that by replacing $v_1$ and $v_2$ we can assume that $\mathbf{d} (v_1 , v_2) = \mathbf{d} (V)$. Now, since the integral $$ \int_{K \times K} \left( \int_{L^+_{<r}} \frac{\omega (x)}{e^{2 \rho (x)}} |f_{k_1 v_1, k_2 v_2} (x)|^2 \cdot dx \right) \cdot dk_1 dk_2$$ over $K \times K$ is simply a finite linear combination the claim is clear.
	
	\medskip
	
	Let us now handle the real case. First, we would like to see that for some $k_1 , k_2 \in K$ we have $\mathbf{d} (k_1 v_1 , k_2 v_2) = \mathbf{d} (V)$. To that end, let us denote by $\frakn$ and $\frakn^-$ the Lie algebras of $N$ and $N^-$ (the unipotent radicals of $P$ and of $P^-$, the opposite to $P$ with respect to $A$) and identify $\fraka$ with the Lie algebra of $A$ as before. Since $\calU (\frakn^-) \calU (\fraka) K v_1$ spans $\underline{V}$ and $\calU (\frakn) \calU (\fraka) K v_2$ spans $\underline{V}$, we can find $k_1 , k_2 \in K$ and some elements $v'_1 \in \calU (\frakn^-) \calU (\fraka) k_1 v_1$ and $v'_2\in \calU (\frakn) \calU (\fraka) k_2 v_2$ such that $\mathbf{d} (v'_1, v'_2) = \mathbf{d} (V)$. By Claim \ref{clm growth exists}(2) this forces $\mathbf{d} (k_1 v_1 , k_2 v_2) = \mathbf{d} (V)$. 
	
	\medskip
	
	Next, given two continuous functions $f_1 , f_2$ on $\fraka^+$ and $d \in \bbZ_{\ge 0}$ let us denote $$ \langle f_1 , f_2 \rangle_d := \lim_{r \to +\infty} \frac{ \int_{\fraka^+_{<r}} \frac{\omega (x)}{e^{2 \rho (x)}} f_1(x) \overline{f_2 (x)} \cdot dx}{r^d}$$ if the limit exists, and $|| f ||^2_d := \langle f , f \rangle_d$.
	
	\medskip
	
	We claim that the function $(k_1 , k_2) \mapsto || f_{k_1 v_1 , k_2 v_2} ||^2_{\mathbf{d} (V)}$ on $K \times K$ is continuous and that $$ \lim_{r \to +\infty} \frac{\int_{K \times K} \left( \int_{\fraka^+_{<r}} \frac{\omega (x)}{e^{2 \rho (x)}} |f_{k_1 v_1 , k_2 v_2} (x)|^2 \cdot dx \right) \cdot dk_1 dk_2}{r^{\mathbf{d} (V)}} = \int_{K \times K} || f_{k_1 v_1 , k_2 v_2 } ||^2_{\mathbf{d} (V)} \cdot dk_1 dk_2.$$ Then the right hand side is non-zero since we have seen that $\mathbf{d} (k_1 v_1 , k_2 v_2) = \mathbf{d} (V)$ for some $k_1 , k_2 \in V$, and we are done.
	
	\medskip

	Let $(v_1^i)$ be a basis for the $\bbC$-span of $\{ k v_1 \}_{k \in K}$ and let $(v_2^j)$ be a basis for the $\bbC$-span of $\{ k v_2 \}_{k \in K}$. Let us write $k v_1 = \sum_i c_i(k) v_1^i$ and $k v_2 = \sum_j d_j (k) v_2^j$, so that $c_i$ and $d_j$ are continuous $\bbC$-valued functions of $K$. Then $$ \int_{\fraka^+_{<r}} \frac{\omega (x)}{e^{2 \rho (x)}} |f_{k_1 v_1 , k_2 v_2} (x)|^2 \cdot dx = $$ $$ = \sum_{i_1 , i_2 , j_1 , j_2} c_{i_1} (k_1) \overline{c_{i_2} (k_1)} \overline{d_{j_1} (k_2)} d_{j_2} (k_2) \int_{\fraka^+_{<r} } \frac{\omega (x)}{e^{2 \rho (x)}} \cdot f_{v_1^{i_1} , v_2^{j_1}} (x) \cdot \overline{f_{v_1^{i_2} , v_2^{j_2}} (x)} \cdot dx.$$ Therefore $$ || f_{k_1 v_1 , k_2 v_2}||^2_{\mathbf{d} (V)} = \sum_{i_1 , i_2 , j_1 , j_2} c_{i_1} (k_1) \overline{c_{i_2} (k_1)} \overline{d_{j_1} (k_2)} d_{j_2} (k_2) \langle f_{v_1^{i_1} , v_2^{j_1}} , f_{v_1^{i_2} , v_2^{j_2}} \rangle_{\mathbf{d} (V)} $$ so $(k_1 , k_2) \mapsto || f_{k_1 v_1 , k_2 v_2} ||^2_{\mathbf{d} (V)}$ is indeed continuous. Also, it is now clear that we have $$  \lim_{r \to +\infty} \frac{\int_{K \times K} \left( \int_{\fraka^+_{<r}} \frac{\omega (x)}{e^{2 \rho (x)}} |f_{k_1 v_1 , k_2 v_2} (x)|^2 \cdot dx \right) \cdot dk_1 dk_2}{r^{\mathbf{d} (V)}} = $$ $$ = \sum_{i_1 , i_2 , j_1 , j_2} \langle f_{v_1^{i_1} , v_2^{j_1}} , f_{v_1^{i_2} , v_2^{j_2}} \rangle_{\mathbf{d} (V)} \int_{K \times K} c_{i_1} (k_1) \overline{c_{i_2} (k_1)} \overline{d_{j_1} (k_2)} d_{j_2} (k_2) \cdot dk_1 dk_2 = $$ $$ = \int_{K \times K} || f_{k_1 v_1 , k_2 v_2} ||^2_{\mathbf{d} (V) } \cdot  dk_1 dk_2$$
\end{proof}

\subsection{}

Let us now explain Claim \ref{clm growth exists} in the case when $G$ is non-Archimedean. Let $v_1 , v_2 \in \underline{V}$. Let us choose a positive integer $k$ large enough so that $k \cdot \bbZ_{\ge 0}^{\Delta} \subset L$. By enlarging $k$ even more if necessary, by \cite[Theorem 4.3.3.]{Ca} for every  $\Theta \subset \Delta$ and every $y \in ( \bbR_{\ge 0}^{\Theta} \times \bbR_{>k}^{\Delta \smallsetminus \Theta}) \cap  L^+ $ the function $$ f_{v_1 , v_2 , \Theta , y} : k \cdot \bbZ_{\ge 0}^{\Delta \smallsetminus \Theta} \to \bbC$$ given (identifying $\bbR^{\Delta \smallsetminus \Theta}$ with a subspace of $\bbR^{\Delta}$ in the clear way) by $x \mapsto f_{v_1 , v_2}(y+x)$, can be written as $$ \sum_{1 \leq i \leq p} c_i \cdot e^{\lambda_i (x_{\Delta \smallsetminus \Theta})} q_i (x_{\Delta \smallsetminus \Theta})$$ where $c_i \in \bbC \smallsetminus \{ 0\}$, $\lambda_i$ is a complex-valued functional on $\bbR^{\Delta \smallsetminus \Theta}$, and $q_i$ is a monomial on $\bbR^{\Delta \smallsetminus \Theta}$. Here $x_{\Delta \smallsetminus \Theta}$ is the image of $x$ under the natural projection $\bbR^{\Delta} \to \bbR^{\Delta \smallsetminus \Theta}$. We can assume that the couples in the collection $\{ (\lambda_i , q_i) \}_{1 \leq i \leq p}$ are pairwise different. Since $V$ is tempered, by ``Casselman's criterion" we in addition have that for every $1 \leq i \leq p$, ${\rm Re} (\lambda_i)$ is non-negative on $\bbR_{\ge 0}^{\Delta \smallsetminus \Theta}$.

\medskip

By Claim \ref{clm appb lat}, either $p = 0$, equivalently $f_{v_1 , v_2 , \Theta , y} = 0$ (in which case we set $d_{v_1 , v_2 , \Theta , y} := -\infty$), or there exists $d_{v_1 , v_2 , \Theta , y} \in \bbZ_{\ge 0}$ such that the limit $$ \lim_{r \to +\infty} \frac{1}{r^{d_{v_1 , v_2 , \Theta , y}}} \sum_{x \in (k \cdot \bbZ_{\ge 0}^{\Delta \smallsetminus \Theta}) \cap L^+_{<r}} |f_{v_1 , v_2}(y+x)|^2$$ exists and is strictly positive.

\medskip

Now, given $y \in L^+$ let us denote $\Theta_y := \{ \alpha \in \Delta \ | \ y_{\alpha} \leq k \}$ where by $y_{\alpha}$ we denote the coordinate of $y \in \bbR^{\Delta}$ at the $\alpha$-place. Let $Y \subset L^+$ be the subset of $y \in L^+$ for which $y_{\alpha} \leq 2k$ for all $\alpha \in \Delta$. Then $Y$ is a finite set, and we have \begin{equation}\label{eq break L into subsets} L^+ = \coprod_{y \in Y} \left( y + k \cdot \bbZ_{\ge 0}^{\Delta \smallsetminus \Theta_y} \right).\end{equation} Notice also that $\omega (x) / e^{2 \rho (x)}$ is a positive constant on each one of the subset of which we take union in (\ref{eq break L into subsets}). We set $\mathbf{d} (v_1 , v_2) := \max_{y \in Y} d_{v_1 , v_2 , \Theta_y , y}$. We see that either $f_{v_1 , v_2} = 0$ (then $\mathbf{d} (v_1 , v_2) = -\infty$) or the limit $$ \lim_{r \to +\infty} \frac{1}{r^{\mathbf{d} (v_1 , v_2)}} \sum_{x \in L^+_{<r} } \frac{\omega (x)}{e^{2 \rho (x)}} |f_{v_1 , v_2} (x)|^2$$ exists and is strictly positive.

\medskip

That $\mathbf{d} (V)$ is finite follows from $\mathbf{d} (v_1 , v_2)$ being controlled by finitely many Jacquet modules, with the finite central actions on them.

\subsection{}

Let us now explain Claim \ref{clm growth exists} in the case when $G$ is real. Using \cite{CaMi} we know that, fixing $k >0$, given $\Theta \subset \Delta$ the restriction of $\frac{\omega (x)^{1/2}}{e^{\rho (x)}} f_{v_1 , v_2} (x)$ to $\bbR_{\ge k}^{\Delta \smallsetminus \Theta} \times [0,k]^{\Theta}$ can be written as $$ \sum_{1 \leq i \leq p} e^{\lambda_i (x_{\Delta \smallsetminus \Theta})} q_i (x_{\Delta \smallsetminus \Theta}) \phi_i (x) $$ where the notation is as follows. First, $\lambda_i$ is a complex-valued functional on $\bbR^{\Delta \smallsetminus \Theta}$. Next, $q_i$ is a monomial on $\bbR^{\Delta \smallsetminus \Theta}$. The couples $(\lambda_i , q_i)$, for $1 \leq i \leq p$, are pairwise distinct. The function $\phi_i$ is expressible as a composition $$ [0,k]^{\Theta} \times \bbR_{\ge k}^{\Delta \smallsetminus \Theta} \xrightarrow{{\rm id} \times {\rm ei}} [0,k]^{\Theta} \times (\bbC_{|-|<1})^{\Delta \smallsetminus \Theta} \xrightarrow{\phi_i^{\circ}} \bbC$$ where ${\rm ei}$ is the coordinate-wise application of $x \mapsto e^{-x}$ and $\phi_{i}^{\circ}$ is a continuous function such that, for every $b \in [0,k]^{\Theta}$, the restriction of $\phi_i^{\circ}$ via $(\bbC_{|-|<1})^{\Delta \smallsetminus \Theta} \xrightarrow{z \mapsto (b,z)} [0,k]^{\Theta} \times (\bbC_{|-|<1})^{\Delta \smallsetminus \Theta}$ is holomorphic. Lastly, the function $b \mapsto \phi_{i}^{\circ} (b , \{0\}^{\Delta \smallsetminus \Theta} ) $ on $[0,k]^{\Theta}$ is not identically zero. Since $V$ is tempered, by ``Casselman's criterion" we in addition have that for every $1 \leq i \leq p$, ${\rm Re} (\lambda_i)$ is non-negative on $\bbR_{\ge 0}^{\Delta \smallsetminus \Theta}$.

\medskip

If $p = 0$, we set $d_{v_1 , v_2 , \Theta} := -\infty$. Otherwise, Claim \ref{clm appb} provides a number $d_{v_1 , v_2 , \Theta} \in \bbZ_{\ge 0}$, described concretely in terms of $\{ (\lambda_i , q_{i} , \phi_{i} ) \}_{1 \leq i \leq p}$, such that the limit $$ \lim_{r \to +\infty} \frac{1}{r^{d_{v_1 , v_2 , \Theta}}} \int_{ ( [0,k]^{\Theta} \times \bbR_{\ge k}^{\Delta \smallsetminus \Theta} ) \cap \fraka^+_{<r}} \frac{\omega (x)}{e^{2 \rho (x)}} |f_{v_1 , v_2} (x)|^2 \cdot dx $$ exists and is strictly positive. We set $\mathbf{d} (v_1 , v_2) := \max_{\Theta \subset \Delta} d_{v_1 , v_2 , \Theta}$. Then either $f_{v_1 , v_2} = 0$ or $\mathbf{d} (v_1 , v_2) > 0$ and the limit $$ \lim_{r \to +\infty} \frac{1}{r^{\mathbf{d} (v_1 , v_2)}} \int_{\fraka^+_{<r}} \frac{\omega (x)}{e^{2 \rho (x)}} |f_{v_1 , v_2} (x)|^2 \cdot dx$$ exists and is strictly positive. That $\mathbf{d} (V)$ is finite follows from $\mathbf{d} (v_1 , v_2)$ being controlled by finitely many data, as in \cite{CaMi}. Part (2) of Claim \ref{clm growth exists} follows easily from the concrete description of $d_{v_1 , v_2 , \Theta}$ in Claim \ref{clm appb}.

\section{Proofs for Remark \ref{rem counterexample}, Remark \ref{rem doesnt depend on norm} , Proposition \ref{prop formula ch}, Proposition \ref{prop from folner to exact} and Remark \ref{rem ctemp red is temp}.}\label{sec clms red}

In this section, $G$ is a semisimple group over a local field. We continue with notations from \S\ref{sec intro}. We explain Remark \ref{rem counterexample} (in Claim \ref{clm counterexample}), explain Remark \ref{rem doesnt depend on norm} (in Claim \ref{clm doesnt depend on norm}), prove Proposition \ref{prop formula ch} (in \S\ref{ssec proof of prop formula ch}), prove Proposition \ref{prop from folner to exact} (in \S\ref{ssec clms red 1}) and explain Remark \ref{rem ctemp red is temp} (in Claim \ref{clm Prop tempered is c-tempered reductive holds}).

\subsection{}\label{ssec clms red 1}

\begin{lemma}\label{lem red temp is ctemp}
	Let $V$ be an irreducible unitary $G$-representation and suppose that there exists a unit vector $v_0 \in V$ satisfying properties (1) and (2) of Proposition \ref{prop from folner to exact}. Let $0 < r_0 < r_1 < \ldots$ be a sequence such that $\lim_{n \to +\infty} r_n = +\infty$. Then $V$ is c-tempered with F{\o}lner sequence $G_{< r_0} , G_{<r_1}, \ldots$.
\end{lemma}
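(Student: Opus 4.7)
The plan is to verify the two axioms of Definition \ref{def c-temp} directly from the two hypotheses of Proposition \ref{prop from folner to exact}, taking the same unit vector $v_0$. First I would check that the candidate F{\o}lner sets $F_n := G_{<r_n}$ satisfy the preliminary requirements: they are measurable as sublevel sets of the continuous function $\mathbf{r}$, each contains a neighborhood of $1$ since $\mathbf{r}(1)=0$, and they are pre-compact because for a semisimple group over a local field $\mathrm{Ad}$ is proper modulo a compact kernel. Condition (1) of Definition \ref{def c-temp} is then essentially automatic: it is the statement of hypothesis (1) of Proposition \ref{prop from folner to exact} evaluated along the particular sequence $r = r_n \to +\infty$.

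The real content lies in condition (2). The key lemma I would first isolate is a sub-additivity estimate for the radius function:
$$ \mathbf{r}(gh) \leq \mathbf{r}(g) + \mathbf{r}(h) \quad \text{for all } g,h \in G,$$
which follows immediately from submultiplicativity of the operator norm applied to $\mathrm{Ad}(gh) = \mathrm{Ad}(g)\mathrm{Ad}(h)$ and to its inverse. Given a compact $K \subset G$, set $C := \sup_{g \in K} \mathbf{r}(g) < +\infty$. Sub-additivity applied to $g_2^{-1} \cdot g \cdot g_1$ then yields, for all $g_1, g_2 \in K$,
$$ G_{<r-2C} \subset g_2^{-1} G_{<r} g_1 \subset G_{<r+2C},$$
and consequently
$$ G_{<r_n} \triangle g_2^{-1} G_{<r_n} g_1 \ \subset \ G_{<r_n + 2C} \smallsetminus G_{<r_n - 2C}.$$

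From here the proof concludes by monotonicity of $\Mss{v_1}{v_2}{\cdot}$: for any $v_1,v_2 \in V$,
$$ \sup_{g_1,g_2 \in K} \Mss{v_1}{v_2}{G_{<r_n} \triangle g_2^{-1} G_{<r_n} g_1} \ \leq \ \Ms{v_1}{v_2}{r_n+2C} - \Ms{v_1}{v_2}{r_n-2C}.$$
Dividing by $\Mss{v_0}{v_0}{G_{<r_n}} = \Ms{v_0}{v_0}{r_n}$ and invoking hypothesis (2) of Proposition \ref{prop from folner to exact} with $r' := 2C$, the right-hand side tends to $0$, giving condition (2) of Definition \ref{def c-temp}. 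There is no significant obstacle here; the only step that is not entirely formal is the sub-additivity of $\mathbf{r}$, which is really a one-line computation. Everything else is a clean translation between the ``balls indexed by a real parameter" language of the reductive setup and the ``abstract F{\o}lner sequence" language of the general c-tempered definition.
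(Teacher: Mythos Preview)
Your proposal is correct and follows essentially the same route as the paper's proof: both verify condition (1) of Definition \ref{def c-temp} immediately from hypothesis (1), and for condition (2) both use sub-additivity of $\mathbf{r}$ to trap $G_{<r_n} \triangle g_2^{-1} G_{<r_n} g_1$ inside the annulus $G_{<r_n + 2r'} \smallsetminus G_{<r_n - 2r'}$ and then invoke hypothesis (2). Your version is slightly more explicit (you spell out sub-additivity and the preliminary checks that the $G_{<r_n}$ are measurable, pre-compact, and contain a neighborhood of $1$), but there is no substantive difference.
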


\begin{proof}
	Property (1) of Definition \ref{def c-temp} is immediate from property (1) of Proposition \ref{prop from folner to exact}. Let us check property (2) of Definition \ref{def c-temp}. Thus, let $v_1 , v_2 \in V$ and let $K \subset G$ be a compact subset. Fix $r^{\prime} \ge 0$ big enough so that $K \subset G_{< r^{\prime}}$ and $K^{-1} \subset G_{< r^{\prime}}$. We then have, for all $r>0$ and all $g_1 , g_2 \in K$: $$G_{< r} \triangle g_2^{-1} G_{< r} g_1 \subset G_{< r + 2 r^{\prime}} \smallsetminus G_{< r - 2 r^{\prime}}.$$ Therefore, using property (2) of Proposition \ref{prop from folner to exact}, $$ \limsup_{r \to +\infty} \frac{\sup_{g_1 , g_2 \in K} \Mss{v_1}{v_2}{G_{< r} \triangle g_2^{-1} G_{< r} g_1}}{ \Mss{v_0}{v_0}{G_{<r}} } \leq \limsup_{r \to +\infty} \frac{ \Mss{v_1}{v_2}{G_{< r + 2 r^{\prime}} \smallsetminus G_{\leq r - 2 r^{\prime} }} }{ \Mss{v_0}{v_0}{G_{<r}}} = 0$$ and therefore also $$  \lim_{n \to +\infty} \frac{\sup_{g_1 , g_2 \in K} \Mss{v_1}{v_2}{G_{< r_n} \triangle g_2^{-1} G_{< r_n} g_1} }{ \Mss{v_0}{v_0}{G_{< r_n}}} = 0.$$
\end{proof}

\begin{proof}[Proof (of Proposition \ref{prop from folner to exact}).]

	Let us fix a $K$-finite unit vector $v'_0 \in V$, for some maximal compact subgroup $K \subset G$. Let $0 < r_0 < r_1 < \ldots$ be a sequence such that $\lim_{n \to +\infty} r_n = +\infty$. By Lemma \ref{lem red temp is ctemp} $V$ is c-tempered with F{\o}lner sequence $G_{<r_0} , G_{<r_1} , \ldots$ and hence by Proposition \ref{prop c-tempered orth rel} we obtain $$ \lim_{n \to +\infty} \frac{\int_{g \in G_{<r_n}} \langle gv_1 , v_2 \rangle \overline{\langle gv_3 , v_4 \rangle} \cdot dg}{\Ms{v'_0}{v'_0}{r_n}} = \langle v_1 , v_3 \rangle \overline{\langle v_2 , v_4 \rangle}$$ for all $v_1 , v_2 , v_3 , v_4 \in V$. Since this holds for any such sequence $\{ r_n \}_{n \ge 0}$, we obtain \begin{equation}\label{eq formula again} \lim_{r \to +\infty} \frac{\int_{g \in G_{<r}} \langle gv_1 , v_2 \rangle \overline{\langle gv_3 , v_4 \rangle} \cdot dg}{\Ms{v'_0}{v'_0}{r}} = \langle v_1 , v_3 \rangle \overline{\langle v_2 , v_4 \rangle} \end{equation} for all $v_1 , v_2 , v_3 , v_4 \in V$. By Theorem \ref{thm main Kfin} we have $$ \lim_{r \to +\infty} \frac{\Ms{v'_0}{v'_0}{r}}{r^{\mathbf{d}(V)}} = C$$ for some $C>0$. This enables to rewrite (\ref{eq formula again}) as $$  \lim_{r \to +\infty} \frac{\int_{g \in G_{<r}} \langle gv_1 , v_2 \rangle \overline{\langle gv_3 , v_4 \rangle} \cdot dg}{r^{\mathbf{d} (V)}} = C \cdot \langle v_1 , v_3 \rangle \overline{\langle v_2 , v_4 \rangle} $$ for all $v_1 , v_2 , v_3 , v_4 \in V$, as desired.

\end{proof}

\subsection{}

\begin{claim}\label{clm doesnt depend on norm}
	The validity of Conjecture \ref{main conj red exact}, as well as the resulting invariants $\mathbf{d} (V)$ and $\mathbf{f} (V)$, of Theorem \ref{thm main Kfin} as well as the resulting invariants $\mathbf{d} (V)$ and $\mathbf{f} (V)$ (the latter in the non-Archimedean case), and of Proposition \ref{prop from folner to exact}, do not depend on the choice of the norm $|| - ||$ on $\frakg$.
\end{claim}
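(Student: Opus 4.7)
The plan hinges on the fact that all norms on the finite-dimensional vector space $\frakg$ are equivalent, so the induced operator norms on $\End(\frakg)$ are equivalent as well. Writing $\mathbf{r}^{(1)}, \mathbf{r}^{(2)}$ for the radius functions associated to two norms $\|-\|^{(1)}, \|-\|^{(2)}$ on $\frakg$, this yields a constant $c_0 \ge 0$ with $|\mathbf{r}^{(1)}(g) - \mathbf{r}^{(2)}(g)| \le c_0$ for all $g \in G$, and therefore the sandwich $G^{(1)}_{<r-c_0} \subset G^{(2)}_{<r} \subset G^{(1)}_{<r+c_0}$, where $G^{(i)}_{<r}$ denotes the subset $G_{<r}$ built from the $i$-th norm. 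The whole argument is a mechanical consequence of this inclusion, together with the trivial observation that $((r \pm c_0)/r)^{\mathbf{d}} \to 1$ as $r \to +\infty$ for any fixed $\mathbf{d} \ge 0$.

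For Conjecture \ref{main conj red exact} and Theorem \ref{thm main Kfin}: assuming the statement for the first norm with exponent $\mathbf{d}(V)$, for every choice of vectors $\Mss{v_1}{v_2}{G^{(1)}_{<r}} = O(r^{\mathbf{d}(V)})$ and, more strongly, $\Mss{v_1}{v_2}{G^{(1)}_{<r+c_0} \smallsetminus G^{(1)}_{<r-c_0}} = o(r^{\mathbf{d}(V)})$. Applying Cauchy-Schwarz to the integral of $\langle gv_1,v_2\rangle \overline{\langle gv_3,v_4\rangle}$ over the symmetric difference $G^{(1)}_{<r} \triangle G^{(2)}_{<r}$, which is contained in that annulus, one obtains that the integrals over $G^{(1)}_{<r}$ and $G^{(2)}_{<r}$ differ by $o(r^{\mathbf{d}(V)})$. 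Hence the same asymptotic, with the same exponent $\mathbf{d}(V)$ and the same constant $\mathbf{f}(V)$ (respectively, the same Archimedean constant $C(v_1,v_2)$ in Theorem \ref{thm main Kfin}(2)), holds for the second norm.

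For Proposition \ref{prop from folner to exact}, the extra subtlety is that the denominator $\Mss{v_0}{v_0}{G_{<r}}$ appearing in the hypotheses must itself be shown to be asymptotically the same for both norms. I would derive this directly from hypothesis (2) of the proposition applied with $v_1 = v_2 = v_0$: by monotonicity of $r \mapsto \Mss{v_0}{v_0}{G^{(1)}_{<r}}$, that hypothesis implies $\Mss{v_0}{v_0}{G^{(1)}_{<r \pm c_0}} / \Mss{v_0}{v_0}{G^{(1)}_{<r}} \to 1$, and so the sandwich gives $\Mss{v_0}{v_0}{G^{(2)}_{<r}} / \Mss{v_0}{v_0}{G^{(1)}_{<r}} \to 1$. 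With this, hypothesis (1) for the second norm follows by combining the inclusion $G^{(2)}_{<r} \subset G^{(1)}_{<r+c_0}$ on the numerator with this asymptotic equivalence in the denominator, while hypothesis (2) for the second norm follows from $G^{(2)}_{<r+r'} \triangle G^{(2)}_{<r-r'} \subset G^{(1)}_{<r+r'+c_0} \triangle G^{(1)}_{<r-r'-c_0}$ together with hypothesis (2) for the first norm applied with $r'$ replaced by $r'+c_0$.

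All the arguments are symmetric in the two norms, so we obtain equivalence rather than a one-way implication in each case. The main technical obstacle, quite mild, is the bookkeeping in the Proposition \ref{prop from folner to exact} step, where one must interchange set inclusions and asymptotic ratios in a consistent order; elsewhere the proof is an essentially automatic application of norm equivalence, polynomial scaling, and the Cauchy-Schwarz inequality.
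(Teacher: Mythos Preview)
Your proposal is correct and follows essentially the same approach as the paper: norm equivalence on $\frakg$ yields equivalence of the induced operator norms on $\End(\frakg)$, hence a uniform bound $|\mathbf{r}^{(1)}(g)-\mathbf{r}^{(2)}(g)|\le c_0$ and the sandwich $G^{(1)}_{<r-c_0}\subset G^{(2)}_{<r}\subset G^{(1)}_{<r+c_0}$, from which everything follows. The paper's proof simply records the sandwich and declares that it ``readily implies the independence claims''; you have spelled out those implications (the annulus-plus-Cauchy--Schwarz argument for the conjecture and theorem, and the ratio bookkeeping for Proposition~\ref{prop from folner to exact}) in more detail than the paper does, but the content is the same.
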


\begin{proof}

Let $|| - ||^{\prime}$ be another norm on $\frakg$, let $\mathbf{r}^{\prime} : G \to \bbR_{\ge 0}$ be the resulting function, and let $G_{<r}^{\prime} \subset G$ be the resulting subsets. There exists $r_0 \ge 0$ such that $$ e^{-r_0} \cdot || X ||  \leq ||X ||^{\prime} \leq e^{r_0} \cdot || X||, \quad \forall X \in \frakg$$ and therefore $$ e^{-2r_0} \cdot ||{\rm Ad} (g)|| \leq || {\rm Ad} (g) ||' \leq e^{2 r_0} \cdot || {\rm Ad} (g) ||, \quad \forall g \in G.$$ Then $$ G^{\prime}_{< r} \subset G_{<r + 2r_0}, \quad \forall r\ge 0$$ and $$ G_{< r} \subset G^{\prime}_{<r + 2r_0}, \quad \forall r\ge 0.$$ These ``sandwich" relations readily imply the independence claims.

\end{proof}

\subsection{}

\begin{claim}\label{clm Prop tempered is c-tempered reductive holds}
	An irreducible unitary $G$-representation for which there exists a unit vector $v_0 \in V$ such that conditions (1) and (2) of Proposition \ref{prop from folner to exact} are satisfied is tempered.
\end{claim}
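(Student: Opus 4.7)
The plan is to reduce this immediately to results already established in the paper, by chaining together Lemma \ref{lem red temp is ctemp} and Corollary \ref{cor c-temp are temp}.

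Concretely, given an irreducible unitary $G$-representation $V$ and a unit vector $v_0 \in V$ satisfying conditions (1) and (2) of Proposition \ref{prop from folner to exact}, I would first fix any sequence $0 < r_0 < r_1 < \ldots$ with $r_n \to +\infty$. Lemma \ref{lem red temp is ctemp} then asserts directly that $V$ is c-tempered (in the sense of Definition \ref{def c-temp}) with F{\o}lner sequence $G_{<r_0}, G_{<r_1}, \ldots$; the vector witnessing c-temperedness is the same $v_0$. The only nontrivial input there is that condition (2) of Proposition \ref{prop from folner to exact} can absorb two-sided conjugation by a compact set $K \subset G$: one chooses $r^{\prime}$ large enough that $K \cup K^{-1} \subset G_{<r^{\prime}}$, uses $G_{<r} \triangle g_2^{-1} G_{<r} g_1 \subset G_{<r+2r^{\prime}} \smallsetminus G_{<r-2r^{\prime}}$, and then appeals to condition (2) of Proposition \ref{prop from folner to exact}.

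Once c-temperedness is established, Corollary \ref{cor c-temp are temp} applies verbatim and yields that $V$ is tempered. (Recall that the chain behind that corollary is: c-temperedness implies right-c-temperedness by restricting the conditions to $v_2 = v_0$ and to right translations; Proposition \ref{prop right-c is op} upgrades right-c-temperedness to o-temperedness, by showing that the truncated matrix-coefficient operators $S_n : V \to L^2(G)$ form, after passing to a subsequence, an asymptotic embedding; and Corollary \ref{cor o temp is temp}, a special case of Proposition \ref{prop o cont is weakly cont}, identifies o-temperedness with temperedness for irreducible $V$.)

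There is no real obstacle here, as both ingredients are proved in earlier sections; the only thing to verify is that the hypotheses of Proposition \ref{prop from folner to exact} match the hypotheses of Lemma \ref{lem red temp is ctemp} exactly, which they do by construction of the Lemma. Thus the proof will be a two-line citation:
\begin{equation*}
\text{Prop.\ \ref{prop from folner to exact} hypotheses} \xRightarrow{\text{Lem.\ \ref{lem red temp is ctemp}}} \text{c-tempered} \xRightarrow{\text{Cor.\ \ref{cor c-temp are temp}}} \text{tempered}.
\end{equation*}
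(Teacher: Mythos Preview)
Your proposal is correct and matches the paper's own proof essentially verbatim: the paper's argument is the single line ``Clear from Lemma \ref{lem red temp is ctemp} coupled with Corollary \ref{cor c-temp are temp}.''
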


\begin{proof}
	Clear from Lemma \ref{lem red temp is ctemp} coupled with Corollary \ref{cor c-temp are temp}.
\end{proof}

\subsection{}

\begin{claim}\label{clm counterexample}
	Let $G := PGL_2 (\Omega)$, $\Omega$ a local field. Let $A \subset G$ be the subgroup of diagonal matrices. Then, for every non-trivial irreducible unitary $G$-representation $V$, the set of matrix coefficients of $V$ restricted to $A$ is equal to the set of function on $A$ of the form $$ a \mapsto \int_{\hat{A}} \chi (a) \cdot \phi (\chi) \cdot d\chi$$ as $\phi$ runs over $ L^1 (\hat{A})$.
\end{claim}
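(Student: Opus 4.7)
The plan is to invoke Kirillov's theorem for $PGL_2$ to identify the restriction of $V$ to the Borel with a concrete representation, and then to compute matrix coefficients by direct Fourier analysis. Since $PGL_2(\Omega)$ is a non-compact (almost) simple group, every non-trivial irreducible unitary representation $V$ is infinite-dimensional. By Kirillov's theorem --- due to Jacquet--Langlands in the non-Archimedean case and to Kirillov in the Archimedean case --- any such $V$ admits a $P$-equivariant unitary isomorphism $V \cong L^2(A, d^*a)$, where $P$ is the Borel subgroup, $A$ is identified with $\Omega^\times$ via $\mathrm{diag}(t,1) \mapsto t$ (well-defined in $PGL_2$), and $A$ acts on $L^2(A, d^*a)$ by translation. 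In particular $V|_A$ is unitarily equivalent to the regular representation of $A$ on $L^2(A, d^*a)$.

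Next I would transport this to the Pontryagin dual: the Fourier transform $L^2(A, d^*a) \xrightarrow{\sim} L^2(\hat{A}, d\chi)$ intertwines the translation action of $A$ with the multiplication action $(a \cdot f)(\chi) = \chi(a) f(\chi)$. Under this identification, for $v_1, v_2 \in V$ corresponding to $f_1, f_2 \in L^2(\hat{A})$, the matrix coefficient restricted to $A$ becomes
$$\langle a v_1, v_2 \rangle_V = \int_{\hat{A}} \chi(a) \cdot f_1(\chi) \overline{f_2(\chi)} \cdot d\chi,$$
which is the Fourier transform of $\phi := f_1 \overline{f_2}$. Since $\phi \in L^1(\hat{A})$ by Cauchy--Schwarz, one obtains $\calM_V(A) \subseteq \widehat{L^1}(A)$. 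For the reverse inclusion, given any $\phi \in L^1(\hat{A})$, choose a measurable $u$ with $|u| = 1$ on $\{\phi \neq 0\}$ and $\phi = u |\phi|$; then $f_1 := u |\phi|^{1/2}$ and $f_2 := |\phi|^{1/2}$ lie in $L^2(\hat{A})$ and satisfy $f_1 \overline{f_2} = \phi$, exhibiting $\widehat{\phi}$ as a matrix coefficient.

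The main obstacle is justifying the Kirillov isomorphism uniformly across all non-trivial irreducible unitary representations (unitary principal series, complementary series, discrete series, Steinberg, supercuspidal) and across both Archimedean and non-Archimedean local fields. This amounts to combining (i) the genericity of every non-trivial irreducible admissible representation of $PGL_2(\Omega)$ --- i.e., existence of a Whittaker functional --- with (ii) the Hilbert space completion statement, that for a unitary $V$ the Kirillov model is dense in $L^2(A, d^*a)$ and the transferred inner product agrees with the one coming from Haar measure. Both ingredients are classical but require invoking the appropriate references in each of the Archimedean and non-Archimedean cases.
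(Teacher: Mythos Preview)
Your approach is correct and reaches the same endpoint as the paper---namely, that $V|_A$ is (a copy of) the regular representation $L^2(A)$, after which the Fourier-analytic computation you give is identical to what is needed. The difference lies in how this identification is obtained. You invoke the unitary Kirillov model, which pins down $V|_B$ precisely as $L^2(\Omega^\times)$ but, as you note, requires genericity of every non-trivial irreducible unitary representation together with the Hilbert-space completion statement, both of which are specific to $GL_2/PGL_2$ and need separate treatment in the Archimedean and non-Archimedean cases. The paper instead argues more softly: by Mackey theory for the solvable group $B = AN$ there is a unique infinite-dimensional irreducible unitary $B$-representation $W$ (with $W|_A \cong L^2(A)$), and all others are killed by $N$; then Howe--Moore gives that $V$ has no non-zero $N$-invariants, so the direct-integral decomposition of $V|_B$ involves only $W$, whence $V|_A$ is a (possibly infinite) multiple of $L^2(A)$. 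This route avoids Whittaker/Kirillov theory entirely and works uniformly over all local fields with essentially no case analysis. Your argument buys the sharper statement that $V|_B$ is a single copy of $W$, but at the cost of heavier input; for the claim at hand the paper's softer argument suffices and is more self-contained.
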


\begin{proof}

Denote by $B \subset G$ the subgroup of upper-triangular matrices and by $N \subset B$ its unipotent radical.

\medskip

Let us recall that, by Mackey theory, there is a unique (up to isomorphism) infinite-dimensional irreducible unitary $B$-representation $W$, and the rest of irreducible unitary $B$-representations are killed by $N$. The restriction ${\rm Res}^B_A W$ is isomorphic to the right regular unitary $A$-representation $L^2 (A)$.

\medskip

Let now $V$ be a non-trivial irreducible unitary $G$-representation. Recall that by the Howe-Moore theorem (or by a step in one of its usual proofs) $V$ does not contain non-zero $N$-invariant vectors. By decomposing the restriction ${\rm Res}^G_B V$ into a direct integral of irreducible unitary $B$-representations, and using the fact that $V$ admits no non-zero $N$-invariant vectors, we see that ${\rm Res}^G_B V$ is a multiple of $W$. Hence, we deduce that ${\rm Res}^G_A A$ is a multiple of the right regular unitary $A$-representation $L^2 (A)$.

\medskip

Now, the matrix coefficients of a multiple of the right regular unitary $A$-representation $L^2 (A)$ are easily seen to be the functions on $A$ of the form $$ a \mapsto \int_{\hat{A}} \chi (a) \cdot \phi (\chi) \cdot d\chi$$ where $\phi \in L^1 (\hat{A})$.

\end{proof}

\subsection{}\label{ssec proof of prop formula ch}

\begin{proof}[Proof (of Proposition \ref{prop formula ch}).]
	Fix $d \in D_c^{\infty} (G)$. Let $K \subset G$ be an open compact subgroup such that $d$ is invariant under $K$ both on left and on right. Let us denote by $e_1 , \ldots , e_n$ an orthonormal basis of $V^K$, and let us denote by $\pi_K : V \to V^K$ the orthonormal projection. Let us denote by $[-,-] : C^{-\infty} (G) \times D_c^{\infty} (G) \to \bbC$ the canonical pairing. We have
$$
[{}^g m_{v_1 , v_2} , d] = [m_{g v_1 , g v_2} , d] = \langle d g v_1 , g v_2 \rangle = \langle d \pi_K (g v_1) , \pi_K (g v_2) \rangle = $$ $$ = \sum_{1 \leq i,j \leq n} \langle g v_1 , e_i \rangle \overline{\langle g v_2 , e_j \rangle} \langle d e_i , e_j \rangle.
$$

Hence $$ \frac{\int_{G_{<r}} [{}^g m_{v_1 , v_2} , d] \cdot dg}{r^{\mathbf{d} (V)}} = \sum_{1 \leq i,j \leq n} \langle d e_i , e_j \rangle \cdot \frac{\int_{G_{<r}} \langle g v_1 , e_i \rangle \overline{\langle g v_2 , e_j \rangle} \cdot dg}{r^{\mathbf{d} (V)}}$$ and therefore $$ \lim_{r \to +\infty}  \frac{\int_{G_{<r}} [{}^g m_{v_1 , v_2} , d] \cdot dg}{r^{\mathbf{d} (V)}} = \frac{1}{\mathbf{f} (V)} \sum_{1 \leq i,j \leq n} \langle d e_i , e_j \rangle \cdot \langle v_1 , v_2 \rangle \overline{\langle e_i , e_j \rangle} = $$ $$ = \frac{1}{\mathbf{f} (V)} \sum_{1 \leq i \leq n} \langle d e_i , e_i \rangle \cdot \langle v_1 , v_2 \rangle = \frac{\langle v_1 , v_2 \rangle}{\mathbf{f} (V)} \Theta_V (d).$$
\end{proof}

\section{The case of the principal series representation $V_1$ of slowest decrease}\label{sec proof of V1}

In this section $G$ is a semisimple group over a local field. We continue with notations from \S\ref{sec intro}. Our goal is to prove Theorem \ref{thm intro slowest} (restated as Theorem \ref{thm V1 c temp} below).

\subsection{}\label{ssec V1 and Xi}

We fix a minimal parabolic $P \subset G$ and a maximal compact subgroup $K \subset G$ such that $G = PK$. We consider the principal series unitary $G$-representation $V_1$ consisting of functions $f : G \to \bbC$ satisfying $$ f(pg) = \Delta_P (p)^{1/2} \cdot f(g) \quad \forall p \in P , g \in G$$ where $\Delta_P : P \to \bbR^{\times}_{>0}$ is the modulus function of $P$. The $G$-invariant inner product on $V_1$ can be taken to be $$ \langle f_1 , f_2 \rangle = \int_K f_1 (k) \cdot \overline{f_2 (k)} \cdot dk$$ (where we normalize the Haar measure on $K$ to have total mass $1$). Recall that $V_1$ is irreducible. We denote by $f_0 \in V_1$ the spherical vector, determined by $f_0 (k) = 1$ for all $k \in K$. We also write $$ \Xi_G (g) := \langle g f_0 , f_0 \rangle.$$

\begin{lemma}\label{lem growth Xi}
	Given $r' \ge 0$ we have $$ \lim_{r \to +\infty} \frac{\int_{G_{<r+r'} \smallsetminus G_{<r-r'}} \Xi_G (g)^2 \cdot dg}{\int_{G_{<r}} \Xi_G (g)^2 \cdot dg}  = 0$$ and $$ \lim_{r \to +\infty} \frac{\int_{G_{<r+r'}} \Xi_G (g)^2 \cdot dg}{\int_{G_{<r}} \Xi_G (g)^2 \cdot dg} = 1.$$
\end{lemma}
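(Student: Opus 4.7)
The plan is to push the integrals through the Cartan decomposition $G = K \exp(L^+) K$ onto integrals over the chamber $L^+$, where the integrand $\omega(x)\,\Xi_G(\exp(x))^2$ becomes (up to exponentially small errors) a polynomial function of $x$. This will make the integrals over $L^+_{<r}$ grow polynomially in $r$ with an identifiable top-degree term, and the conclusions will drop out from comparing the volumes of a box and a shell.

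First I would use the $K$-biinvariance of $\Xi_G$ and the integration formula from \S\ref{sec Kfin} to rewrite, for any $K$-biinvariant measurable $S = K \exp(S_0) K$ with $S_0 \subset L^+$,
$$\int_S \Xi_G(g)^2\,dg = \int_{S_0} \omega(x)\,\Xi_G(\exp(x))^2\,dx.$$
The subsets $G_{<r}$ are not exactly $K$-biinvariant, but the sandwich (\ref{eq Gr is like Lr}) reduces the lemma to the analogous statements in which $G_{<r}$ is replaced by $K \exp(L^+_{<r}) K$, at the cost of shifting $r$ by a bounded constant, which can be absorbed into $r'$.

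Second, I would invoke the classical polynomial asymptotic expansion of $\Xi_G$ (Harish-Chandra's $c$-function theory in the real case, Macdonald's formula in the non-Archimedean case): there is a polynomial $P$ on $\fraka$ and a $\delta > 0$ such that, for $x$ deep in the chamber,
$$\Xi_G(\exp(x)) = e^{-\rho(x)} P(x)\bigl(1 + O(e^{-\delta \min_\alpha \alpha(x)})\bigr),$$
and analogous polynomial expansions hold on each face of $L^+$ (coming from the corresponding Levi subgroup). Combined with (\ref{eq omega is like rho Arch})/(\ref{eq omega is like rho nonArch}) for $\omega(x)/e^{2\rho(x)}$, the product $\omega(x)\,\Xi_G(\exp(x))^2$ equals $C \cdot P(x)^2$ on the deep interior of $L^+$ up to an exponentially small error, and analogously is polynomial of smaller degree on each lower-dimensional face.

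Third, I would integrate over $L^+_{<r}$, which is a box of side $\asymp r$ intersected with the chamber. Integrating the leading polynomial density yields
$$\int_{L^+_{<r}} \omega(x)\,\Xi_G(\exp(x))^2\,dx = \mathcal{P}(r) + O(r^{D-1}),$$
where $\mathcal{P}$ is a polynomial in $r$ of some degree $D$ with positive leading coefficient (the exponentially small corrections contribute an $O(1)$ remainder, and the wall contributions contribute polynomials of strictly lower degree). Since $\mathcal{P}(r+r') - \mathcal{P}(r-r')$ is a polynomial in $r$ of degree at most $D-1$, the shell integral is $O(r^{D-1})$ while the ball integral is $\sim c\,r^D$, whence their ratio is $O(1/r) \to 0$, proving the first limit; the second limit follows similarly from $\mathcal{P}(r+r')/\mathcal{P}(r) \to 1$.

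The main technical obstacle is establishing the polynomial shape of $\omega(x)\,\Xi_G(\exp(x))^2$ uniformly on the chamber and on all of its walls, with sufficiently strong error control that, after integration, the remainder is honestly of lower polynomial degree in $r$. This requires piecing together Harish-Chandra's leading-exponent theory for $\Xi_G$ (and its analogue over non-Archimedean fields) with the piecewise description of $\omega/e^{2\rho}$ in (\ref{eq omega is like rho Arch})/(\ref{eq omega is like rho nonArch}); everything else is bookkeeping about polynomial growth of a single-variable integral of a polynomial over a box.
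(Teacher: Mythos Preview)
Your approach is correct, but the paper's proof is a one-liner: since $\Xi_G(g) = \langle g f_0, f_0 \rangle$ is the matrix coefficient of the $K$-fixed unit vector $f_0$ in the tempered irreducible representation $V_1$, Theorem~\ref{thm main Kfin} already gives
\[
\lim_{r\to+\infty}\frac{\int_{G_{<r}}\Xi_G(g)^2\,dg}{r^{\mathbf d(V_1)}}=C>0,
\]
and both limits in the lemma follow immediately from this (the shell integral over $G_{<r+r'}\smallsetminus G_{<r-r'}$ divided by $r^{\mathbf d(V_1)}$ tends to $C-C=0$).

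What you do instead is re-derive this special case of Theorem~\ref{thm main Kfin} from scratch, via the Cartan decomposition and the Harish-Chandra/Macdonald asymptotics of $\Xi_G$. That is exactly the machinery that went into proving Theorem~\ref{thm main Kfin} in \S\ref{sec Kfin} (Claims~\ref{clm growth exists} and~\ref{clm Kfin ess}, via the appendix Claims~\ref{clm appb lat} and~\ref{clm appb}), so you are essentially reproducing a piece of that proof rather than citing the result. Your route has the merit of being self-contained and of using only the explicit structure of $\Xi_G$ rather than the general Casselman/Casselman--Mili{\v c}i{\' c} theory; the paper's route buys brevity by recognising that the lemma is an immediate corollary of a theorem already in hand. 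The ``main technical obstacle'' you flag---controlling the error terms uniformly across the walls of $L^+$---is real, and is precisely what the stratified analysis in \S\ref{sec Kfin} and the appendix handles; once you have Theorem~\ref{thm main Kfin}, none of that needs to be repeated.
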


\begin{proof}
	The second equality follows from the first, and the first is immediately implied by Theorem \ref{thm main Kfin}.
\end{proof}

\subsection{} The main result of this section is:

\begin{theorem}\label{thm V1 c temp}
	Let $V$ be an irreducible tempered unitary $G$-representation. Suppose that there exist a unit vector $v_0 \in V$ such that \begin{equation}\label{eq good vector}
\underset{r \to +\infty}{\limsup} \frac{ \int_{G_{<r}} \Xi_G (g)^2 \cdot dg }{ \Ms{v_0}{v_0}{r} } < +\infty.
\end{equation} Then Conjecture \ref{main conj red exact} holds for $V$. In particular, Conjecture \ref{main conj red exact} holds for $V_1$.
\end{theorem}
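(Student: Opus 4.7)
The plan is to verify the two hypotheses of Proposition \ref{prop from folner to exact} for the unit vector $v_0$ given by (\ref{eq good vector}), from which the conclusion follows formally. By Claim \ref{clm doesnt depend on norm} I am free to choose the norm on $\frakg$ to be $\textnormal{Ad}(K)$-invariant, so that each $G_{<r}$ is bi-$K$-invariant. The whole argument rests on establishing, for each $v_1, v_2 \in V$, a constant $C(v_1, v_2) < +\infty$ with
\[
\Ms{v_1}{v_2}{r} \leq C(v_1, v_2) \int_{G_{<r}} \Xi_G(g)^2 \cdot dg \qquad \text{for every } r>0.
\]
Granted this, condition (1) of Proposition \ref{prop from folner to exact} follows immediately from (\ref{eq good vector}), and the same estimate applied to the annulus $G_{<r+r'} \smallsetminus G_{<r-r'}$, combined with Lemma \ref{lem growth Xi} and (\ref{eq good vector}), yields condition (2).

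For $K$-finite $v_1, v_2$ the required bound follows upon squaring and integrating the Cowling--Haagerup--Howe pointwise estimate $|\langle gv_1, v_2\rangle| \leq C(v_1, v_2) \Xi_G(g)$ recalled in \S\ref{ssec intro 1}. To treat arbitrary $v_1, v_2 \in V$, I would exploit the bi-$K$-invariance of $G_{<r}$ to rewrite
\[
\Ms{v_1}{v_2}{r} = \int_{G_{<r}} \left( \int_K \int_K |\langle k_1 g k_2 v_1, v_2\rangle|^2 \cdot dk_1 dk_2 \right) \cdot dg,
\]
decompose $v_1 = \sum_\tau v_1^\tau$ and $v_2 = \sum_\sigma v_2^\sigma$ into $K$-isotypic components, and apply Schur orthogonality on $K$: integration in $dk_1$ kills cross-terms between distinct $\sigma$-isotypes and yields a compensating factor $1/d_\sigma$, with an analogous phenomenon in $\tau$ after integrating in $dk_2$. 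The special case $V = V_1$ requires no further work: taking $v_0 := f_0$ the spherical vector, one has the tautology $\Ms{f_0}{f_0}{r} = \int_{G_{<r}} \Xi_G(g)^2 \cdot dg$, so (\ref{eq good vector}) is satisfied with limsup equal to $1$.

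The principal obstacle is producing a \emph{finite} $C(v_1, v_2)$ for general (non-$K$-finite) $v_1, v_2$: applied directly to each isotypic pair, Cowling--Haagerup--Howe contributes a factor $d_\tau d_\sigma$, and the sum $\sum_{\tau, \sigma} d_\tau d_\sigma \|v_1^\tau\|^2 \|v_2^\sigma\|^2$ can diverge. To circumvent this, I would split the integration over $G_{<r}$ by the size of $\Xi_G$: on $\{\Xi_G \leq \ve\}$ the Cowling--Haagerup--Howe bound combined with the $1/d_\sigma$, $1/d_\tau$ factors from Schur orthogonality gives the desired $\Xi_G^2$-controlled estimate, while on the compact complementary set $\{ \Xi_G \geq \ve \}$ the Schwarz bound $|\langle gv_1, v_2\rangle| \leq \|v_1\|\|v_2\|$ suffices and contributes only a bounded quantity, asymptotically absorbed into the main term $\int_{G_{<r}} \Xi_G(g)^2 \cdot dg$.
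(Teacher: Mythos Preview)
Your overall framework coincides with the paper's: reduce to Proposition~\ref{prop from folner to exact} by establishing
\[
\Ms{v_1}{v_2}{r} \leq C(v_1,v_2)\int_{G_{<r}}\Xi_G(g)^2\,dg
\]
for all $v_1,v_2\in V$ (this is exactly Claim~\ref{clm est using Xi}, with $C(v_1,v_2)=\|v_1\|^2\|v_2\|^2$), and then combine with Lemma~\ref{lem growth Xi} and \eqref{eq good vector}. The treatment of $V_1$ via $v_0=f_0$ is also the same.

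The gap is in your proposed proof of this key inequality for general vectors. The mechanism you sketch --- Schur orthogonality on $K\times K$ yielding $1/(d_\tau d_\sigma)$, then Cowling--Haagerup--Howe on each isotypic piece --- does not produce cancellation. Concretely (in the multiplicity-one situation, which already exhibits the problem), Schur orthogonality gives
\[
\int_{K\times K}\bigl|\langle k_1 g k_2 v_1^\tau,v_2^\sigma\rangle\bigr|^2\,dk_1\,dk_2
=\frac{\|v_1^\tau\|^2\|v_2^\sigma\|^2}{d_\tau d_\sigma}\sum_{p=1}^{d_\tau}\sum_{q=1}^{d_\sigma}\bigl|\langle g e_p^\tau,e_q^\sigma\rangle\bigr|^2,
\]
and applying the CHH bound $|\langle g e_p^\tau,e_q^\sigma\rangle|^2\le d_\tau d_\sigma\,\Xi_G(g)^2$ to each of the $d_\tau d_\sigma$ summands returns you to $d_\tau d_\sigma\,\|v_1^\tau\|^2\|v_2^\sigma\|^2\,\Xi_G(g)^2$: nothing is gained. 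Your splitting $\{\Xi_G\le\ve\}$ versus $\{\Xi_G>\ve\}$ does not touch this issue, since the obstruction is the dimension factor, not the magnitude of $\Xi_G$.

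The paper bypasses isotypic decompositions entirely and proves Claim~\ref{clm est using Xi} by an operator-norm argument (Claim~\ref{clm norm of conv}). Set $\phi(g):=\mathrm{ch}_S(g)\,\overline{\langle gv_1,v_2\rangle}$; then
\[
\int_S|\langle gv_1,v_2\rangle|^2\,dg=\langle S_\phi v_1,v_2\rangle\le\|S_\phi\|\le\|T_\phi\|,
\]
where $S_\phi$ acts on $V$, $T_\phi$ is convolution on $L^2(G)$, and the middle inequality is temperedness. One then reduces (via \cite{ChPiSa}) to $K$-biinvariant non-negative $\phi$, for which Herz's formula gives $\|T_\phi\|=\int_G\Xi_G\phi$, and Cauchy--Schwarz closes the loop. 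This argument handles arbitrary $v_1,v_2$ in one stroke, with sharp constant $1$ for unit vectors.
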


\subsection{}

We will prove Theorem \ref{thm V1 c temp} using the following result:

\begin{claim}\label{clm est using Xi}
	Let $V$ be a tempered unitary $G$-representation. Then for all unit vectors $v_1,v_2 \in V$ and all measurable $K$-biinvariant subsets $S \subset G$ we have $$ \int_S |\langle gv_1 , v_2 \rangle |^2 \cdot dg \leq \int_S \Xi_G (g)^2 \cdot dg.$$
\end{claim}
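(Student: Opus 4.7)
The plan is to exploit the $K$-bi-invariance of $S$ via $K\times K$-averaging, reducing the claim to a pointwise spherical estimate, which will then follow from temperedness.

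First, by unimodularity of Haar measure on $G$ and the $K$-bi-invariance of $S$, for any $k_1, k_2 \in K$ the change of variables $g \mapsto k_1^{-1} g k_2^{-1}$ preserves both $S$ and $dg$, so that
$$\int_S |\langle g v_1, v_2\rangle|^2\, dg = \int_S |\langle k_1 g k_2 v_1, v_2\rangle|^2 \, dg.$$
Averaging over $(k_1, k_2) \in K\times K$ and applying Fubini yields
$$\int_S |\langle g v_1, v_2\rangle|^2 \, dg \;=\; \int_S F(g) \, dg, \qquad F(g) := \int_{K \times K} |\langle k_1 g k_2 v_1, v_2\rangle|^2 \, dk_1\, dk_2,$$
so it suffices to establish the pointwise bound $F(g) \le \Xi_G(g)^2$ for every $g \in G$.

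For this pointwise estimate, $F(g)$ can be identified with a matrix coefficient of a $K$-invariant element in the representation $V \otimes \overline{V}$ (equivalently, Hilbert--Schmidt operators on $V$ with $G$ acting by conjugation): explicitly, $F(g) = \langle g \cdot A_{v_1}, A_{v_2}\rangle$, where $A_{v_i}$ is the $K$-averaged rank-one operator corresponding to $v_i$, and $\| A_{v_i}\|_{\mathrm{HS}} \le 1$ with equality when $v_i \in V^K$. The required bound is an instance of Herz's majorization principle. One route to prove it: since $V$ is tempered, by Corollary \ref{cor temp Zimmer temp} $V$ is Zimmer-weakly contained in $L^2(G)$, so on the compact set $K g K$ the matrix coefficient $h \mapsto \langle h v_1, v_2\rangle$ may be uniformly approximated by $h \mapsto \langle h f_1, f_2\rangle_{L^2(G)}$ for suitable unit vectors $f_1, f_2$; this reduces the pointwise estimate to its analogue in $L^2(G)$. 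In $L^2(G)$ the matrix coefficients are explicit convolutions, and the $K\times K$-averaged inequality follows from the Plancherel formula together with the identification of $\Xi_G$ as the spherical function of slowest decay (Kunze--Stein phenomenon), with equality when $v_1 = v_2 = f_0 \in V_1$ is the spherical vector.

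Integrating the pointwise bound $F(g) \le \Xi_G(g)^2$ over $S$ yields the claim. The main obstacle is the pointwise spherical bound itself: although $F(g)$ formally looks like a matrix coefficient of a $K$-invariant vector in $V \otimes \overline{V}$, this tensor square is typically not tempered as a $G$-representation (it contains the trivial summand, so temperedness would force amenability of $G$), which blocks any direct application of Cowling--Haagerup--Howe to the tensor product. The estimate must instead be routed through weak containment in $L^2(G)$ and the convolutional structure of the regular representation, which is where the analytic core of the argument lies.
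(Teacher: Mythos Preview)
Your reduction to the pointwise inequality $F(g)\le \Xi_G(g)^2$ is valid and in fact equivalent to the claim (shrink $S$ to $K$-bi-invariant neighbourhoods of a point). The further reduction to $V=L^2(G)$ via Zimmer-weak containment is also correct. But the step you yourself flag as ``the analytic core'' --- the pointwise bound for $L^2(G)$ --- is not carried out. The phrases ``Plancherel formula'' and ``Kunze--Stein phenomenon'' do not by themselves yield $\int_{K\times K}|\langle k_1gk_2 f_1,f_2\rangle|^2\,dk_1dk_2\le \Xi_G(g)^2$: Kunze--Stein is an $L^p\ast L^2\subset L^2$ statement, and Plancherel decomposes $L^2(G)$ into tempered irreducibles, for each of which the desired bound is precisely what you are trying to prove. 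No argument has been supplied here, and the standard Cowling--Haagerup--Howe bounds carry a $K$-type-dependent constant, so the constant-$1$ pointwise estimate is not automatic.

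The paper's proof is shorter and avoids the detour through $L^2(G)$ entirely. It uses temperedness in its operator-norm form: for any $\phi$, the operator $S_\phi=\int_G\phi(g)\pi(g)\,dg$ on $V$ satisfies $\|S_\phi\|\le\|T_\phi\|$, where $T_\phi$ is right convolution by $\phi$ on $L^2(G)$. Taking $\phi:=\mathrm{ch}_S\cdot\overline{\langle\,\cdot\,v_1,v_2\rangle}$ gives $\langle S_\phi v_1,v_2\rangle=\int_S|\langle gv_1,v_2\rangle|^2\,dg$, so the left side of the claim is bounded by $\|T_\phi\|$. The latter is controlled by a separate convolution-norm estimate (Claim~\ref{clm norm of conv}, from \cite{ChPiSa}): after reducing to $K$-bi-invariant non-negative $\phi$, one has the exact identity $\|T_\phi\|=\int_G\Xi_G\,\phi$, and Cauchy--Schwarz finishes. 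This is the missing mechanism; your outline gestures at ``convolutional structure'' but does not locate this identity, which is what makes the argument go through with constant $1$.
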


\begin{proof}[Proof (of Theorem \ref{thm V1 c temp} given Claim \ref{clm est using Xi}).]
	
	To show that Conjecture \ref{main conj red exact} holds for $V$ we will use Proposition \ref{prop from folner to exact}, applied to our $V$ and our $v_0$.
	
	\medskip
	
	There exists $r_0 \ge 0$ such that $K G_{<r} K \subset G_{< r + r_0}$ for all $r \ge 0$.
	
	\medskip
	
	Let us verify condition (1) of Proposition \ref{prop from folner to exact}. For unit vectors $v_1 , v_2 \in V$ we have $$ \frac{\Ms{v_1}{v_2}{r}}{\Ms{v_0}{v_0}{r}} \leq \frac{\int_{G_{<r+r_0}} \Xi_G (g)^2 \cdot dg}{\Ms{v_0}{v_0}{r}}$$ and therefore condition (1) of Proposition \ref{prop from folner to exact} follows from (\ref{eq good vector}) and Lemma \ref{lem growth Xi}. Let us now verify condition (2) of Proposition \ref{prop from folner to exact}. For unit vectors $v_1 , v_2 \in V$ and $r' \ge 0$ we have $$ \frac{ \Ms{v_1}{v_2}{r+r'} - \Ms{v_1}{v_2}{r-r'}}{\Ms{v_0}{v_0}{r}} \leq \frac{\int_{G_{<r+r'+r_0} \smallsetminus G_{<r-(r'+r_0)}} \Xi _G(g)^2 \cdot dg}{\int_{G_{<r}} \Xi_G (g)^2 \cdot dg} \cdot \frac{\int_{G_{<r}} \Xi_G (g)^2 \cdot dg}{\Ms{v_0}{v_0}{r}}$$ and therefore condition (2) of Proposition \ref{prop from folner to exact} follows from (\ref{eq good vector}) and Lemma \ref{lem growth Xi}.
\end{proof}

\subsection{}

We will prove Claim \ref{clm est using Xi} using the following result:

\begin{claim}\label{clm norm of conv}
Let $\phi \in L^2 (G)$ be zero outside of a measurable $K$-biinvariant subset $S \subset G$ of finite volume. Denote by $T_{\phi} : L^2 (G) \to L^2 (G)$ the operator of convolution $\psi \mapsto \phi \star \psi$. Then\footnote{Here $|| \phi ||$ stands for the $L^2$-norm of $\phi$.} $$ || T_{\phi} ||^2 \leq \left( \int_{S} \Xi_G (g)^2 \cdot dg \right) \cdot || \phi ||^2.$$
\end{claim}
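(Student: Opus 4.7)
The plan is to reduce to the case of $K$-biinvariant $\phi$, where the inequality follows from the spherical Plancherel theorem, and then handle the general case via a $(K \times K)$-averaging argument exploiting the $K$-biinvariance of $S$.

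First, suppose $\phi$ is $K$-biinvariant and supported in $S$. Writing $e_K := \mathbf{1}_K / \vol_G(K)$, the identity $\phi = e_K \star \phi \star e_K$ forces $T_\phi = P \circ T_\phi \circ P$, where $P = T_{e_K}$ is the orthogonal projection of $L^2(G)$ onto the subspace $L^2(K \backslash G)$ of left-$K$-invariant functions. Hence $\|T_\phi\|_\op$ equals its operator norm on $L^2(K \backslash G)$, on which $T_\phi$ commutes with the right regular representation. By the spherical Plancherel theorem, $T_\phi$ then acts as multiplication by $\hat\phi(\pi) := \int_G \phi \,\phi_\pi$ on each component of the direct integral over the tempered spherical dual, where $\phi_\pi$ is the spherical function of $\pi$; so $\|T_\phi\|_\op = \sup_\pi |\hat\phi(\pi)|$. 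The classical pointwise bound $|\phi_\pi| \leq \Xi_G$ for tempered spherical $\pi$, together with Cauchy--Schwarz on $L^2(S)$, then yields $|\hat\phi(\pi)|^2 \leq \|\phi\|^2 \int_S \Xi_G^2$.

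For general $\phi \in L^2(G)$ supported in the $K$-biinvariant set $S$, let $F(g) := \langle g\psi, \chi\rangle$ denote the matrix coefficient of the regular representation against unit vectors $\psi, \chi \in L^2(G)$. The $K$-biinvariance of $S$ together with unimodularity lets us insert a normalized $(K \times K)$-Haar average and change variables to rewrite
\[
\langle T_\phi \psi, \chi\rangle = \int_S \int_{K \times K} \phi(k_1 g k_2)\, F(k_1 g k_2)\, dk_1\, dk_2\, dg.
\]
Applying Cauchy--Schwarz in $(k_1, k_2)$ pointwise in $g$, then in $g \in S$, bounds this by $\|\phi^\sharp\|_{L^2(G)} \cdot \|F^\sharp\|_{L^2(S)}$, where $\phi^\sharp(g)^2 := \int_{K \times K} |\phi(k_1 g k_2)|^2\, dk_1\, dk_2$ and $F^\sharp$ is the analogous $K$-biinvariant $L^2$-average. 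By Fubini and unimodularity, $\|\phi^\sharp\|_{L^2(G)} = \|\phi\|_{L^2(G)}$ and $\|F^\sharp\|_{L^2(S)}^2 = \|F\|_{L^2(S)}^2$, so it remains to show $\|F\|_{L^2(S)}^2 \leq \int_S \Xi_G^2$ for unit $\psi, \chi$.

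The main obstacle will be this last inequality, which is essentially the case $\pi = $ (regular representation) of Claim \ref{clm est using Xi}. Since the paper derives Claim \ref{clm est using Xi} \emph{from} Claim \ref{clm norm of conv}, we must argue for it directly here. I would do so by writing $F(g) = \overline{(\chi \star \widetilde\psi)(g)}$ with $\widetilde\psi(x) := \overline{\psi(x^{-1})}$, using the explicit formula $\Xi_G(g) = \int_K \Delta_P(p(kg))^{1/2}\, dk$ arising from the realization of $V_1 = \textnormal{Ind}_P^G \mathbf{1}$ with spherical vector $f_0$, and invoking Schur orthogonality for $K$ combined with the Iwasawa-decomposition-adapted Cartan integration formula recalled in \S\ref{sec Kfin}.
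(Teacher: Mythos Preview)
Your step~1 (the $K$-biinvariant case via spherical Plancherel and the pointwise bound $|\varphi_\pi| \leq \Xi_G$ for tempered spherical functions) is correct and recovers what the paper quotes as \cite[Proposition 4.3]{ChPiSa}.

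The gap is in step~2. Your $(K\times K)$-averaging correctly reduces the general case to the inequality $\int_S |\langle g\psi,\chi\rangle|^2\,dg \leq \int_S \Xi_G(g)^2\,dg$ for unit vectors $\psi,\chi \in L^2(G)$, which --- as you yourself observe --- is exactly Claim~\ref{clm est using Xi} for the regular representation, derived in the paper \emph{from} the present claim. Your proposed direct argument does not close the circle: writing $F = \overline{\chi \star \widetilde\psi}$ and invoking the Cartan integration formula together with ``Schur orthogonality for $K$'' yields at best the $K$-finite bound $|\langle g\psi,\chi\rangle| \leq C(\psi,\chi)\,\Xi_G(g)$ with a constant depending on the $K$-types, and summing over $K$-types does not give the uniform inequality for arbitrary unit vectors. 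What your double Cauchy--Schwarz would actually need is a pointwise bound on the $L^2$-average $\int_{K\times K}|\langle k_1 a k_2 \psi,\chi\rangle|^2\,dk_1\,dk_2$ by $\Xi_G(a)^2$, and nothing in your sketch produces that; the standard Herz--Cowling--Haagerup--Howe majorization controls the $L^1$-average, which is too weak for this pairing.

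The paper avoids the circularity by quoting \cite[Lemma 3.5]{ChPiSa} to reduce to non-negative $K$-biinvariant $\phi$. That lemma is an operator-norm comparison $\|T_\phi\| \leq \|T_{\phi'}\|$ for a suitable $K$-biinvariant $\phi' \geq 0$ with $\|\phi'\|_{L^2}=\|\phi\|_{L^2}$, proved directly rather than by dualizing to matrix-coefficient integrals. With that reduction in hand, step~1 and a single Cauchy--Schwarz on $L^2(S)$ finish the proof in one line.
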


\begin{proof}[Proof (of Claim \ref{clm est using Xi} given Claim \ref{clm norm of conv}).]
We can clearly assume that $S$ has finite volume. Let us denote $$ \phi (g) := {\rm ch}_{S} (g) \cdot \overline{\langle gv_1 , v_2 \rangle},$$ where ${\rm ch}_S$ stands for the characteristic function of $S$. Let us denote by $S_{\phi} : V \to V$ the operator $$ v \mapsto \int_G \phi (g) \cdot gv \cdot dg.$$ Since $V$ is tempered, we have $ ||S_{\phi}|| \leq || T_{\phi}||$. Therefore $$ \int_{S} |\langle gv_1 , v_2 \rangle |^2 \cdot dg = \int_G \phi (g) \cdot \langle gv_1 , v_2 \rangle \cdot dg = \langle S_{\phi} v_1 , v_2 \rangle \leq || S_{\phi} || \leq || T_{\phi} || \leq $$ $$ \leq \left( \sqrt{\int_S \Xi_G (g)^2 \cdot dg} \right) \cdot || \phi || = \left( \sqrt{\int_S \Xi_G (g)^2 \cdot dg} \right) \cdot  \left( \sqrt{\int_S |\langle gv_1 , v_2 \rangle |^2 \cdot dg} \right)$$ thus $$\int_S |\langle gv_1 , v_2 \rangle |^2 \cdot dg \leq \int_S \Xi_G (g)^2 \cdot dg$$ as desired.
\end{proof}

\subsection{}

Finally, let us prove Claim \ref{clm norm of conv}, following \cite{ChPiSa}.

\begin{proof}[Proof (of Claim \ref{clm norm of conv}).]
By\footnote{In the lemma we refer to it is assumed that $\phi$ is continuous but the arguments there apply to our $\phi$ without any modification.} \cite[Lemma 3.5]{ChPiSa} we can assume that $\phi$ is $K$-biinvariant and non-negative. By \cite[Proposition 4.3]{ChPiSa} we have $$ || T_{\phi} || = \int_G \Xi_G (g) \cdot \phi (g) \cdot dg.$$ Applying the Cauchy-Schwartz inequality, we obtain $$ ||T_{\phi} ||^2 \leq \left( \int_S \Xi_G (g)^2 \cdot dg\right) \cdot|| \phi||^2,$$ as desired.
\end{proof}

\section{The proof of Theorem \ref{thm intro SL2R}}\label{sec proof of SL2R}

In this section we let $G$ be either $SL_2 (\bbR)$ or $PGL_2 (\Omega)$, where $\Omega$ is a non-Archimedean local field of characteristic $0$ and residual characteristic not equal to $2$. We prove Theorem \ref{thm intro SL2R}.

\subsection{}

If $G = PGL_2 (\Omega)$, we denote by $\varpi$ a uniformizer in $\Omega$, by $\calO$ the ring of integers in $\Omega$, by $p$ the residual characteristic of $\Omega$ and $q:= |\calO / \varpi \calO |$.

\subsection{}\label{ssec SL2 notation}

We denote by $A \subset G$ the subgroup of diagonal matrices and by $U \subset G$ the subgroup of unipotent upper-triangular matrices. If $G = SL_2 (\bbR)$ we define the isomorphism $$ \mathbf{a} : \bbR^{\times} \to A, \quad t \mapsto \mtrx{t}{0}{0}{t^{-1}}$$ and if $G = PGL_2 (\Omega)$ we define the isomorphism $$ \mathbf{a} : \Omega^{\times} \to A, \quad  t \mapsto \mtrx{t}{0}{0}{1}.$$ We denote $A^+ := \{ a \in A \ | \ |\mathbf{a}^{-1} (a) | \ge 1 \}$.

\medskip

If $G = SL_2 (\bbR)$ then we can (and will) take $||- ||$ on $\frakg$ to be such that $$\mathbf{r} \left( k_1 \mtrx{t}{0}{0}{t^{-1}} k_2 \right) = \log \max \{ |t|^2 , |t|^{-2} \}$$ where $t\in \bbR^{\times}$ and $k_1 , k_2 \in SO (2)$. If $G = PGL_2 (\Omega)$ then we can (and will) take $|| - ||$ on $\frakg$ to be such that $$\mathbf{r} \left( k_1 \mtrx{t}{0}{0}{s} k_2 \right) = \log \max \{ |t/s| , |s/t| \}$$ where $t,s \in \Omega^{\times}$ and $k_1 , k_2 \in PGL_2 (\calO)$. Let us denote $A^+_{<r} := A^+ \cap G_{<r}$.

\medskip

If $G = SL_2 (\bbR)$ we set $K := SO (2) \subset G$. If $G = PGL_2 (\Omega)$ we choose a non-square $\zeta \in \calO^{\times}$ and set $K \subset G$ to be the subgroup of elements of the form $\mtrx{a}{\zeta b}{b}{a}$, $(a,b) \in \Omega^2 \smallsetminus \{ (0,0) \}$ (so $K$ is a closed compact subgroup in $G$, but not open, and in particular not maximal).

\medskip

We set $\omega : A^+ \to \bbR_{\ge 0}$ to be given by $\omega (\mathbf{a} (t)) := |t^2 - t^{-2} |$ if $G = SL_2 (\bbR)$ and $\omega (\mathbf{a} (t)) := |t - t^{-1} |$ if $G = PGL_2 (\Omega)$. Then, taking the Haar measure on $K$ to have total mass $1$ and appropriately normalizing the Haar measure on $A$, for all non-negative-valued measurable functions $f$ on $G$ we have $$ \int_G f(g) \cdot dg = \int_{A^+} \omega (a) \left( \int_{K \times K} f(k_2 a k_1) \cdot dk_1 dk_2 \right) da.$$

\medskip

Given a unitary $G$-representation $V$, vectors $v_1 , v_2 \in V$ and $a \in A^+$, we write $$ \ms{v_1}{v_2}{a} := \int_{K \times K} | \langle k_2 a k_1 v_1 , v_2 \rangle |^2 \cdot dk_1 dk_2.$$ We have \begin{equation}\label{eq M in term of Mcirc} \Ms{v_1}{v_2}{r} := \int_{A^+_{<r}} \omega (a) \cdot \ms{v_1}{v_2}{a} \cdot da \end{equation} (where $M_{v_1 , v_2} (r)$ was already defined in \S\ref{sec intro}).

\medskip

Given a unitary character\footnote{${\rm U} (1)$ denotes the subgroup of $\bbC^{\times}$ consisting of complex numbers with absolute value $1$.} $\chi: A \to {\rm U}(1) $ we consider the principal series unitary $G$-representation $V_{\chi}$, consisting of functions $f : G \to \bbC$ satisfying $$ f(u a g) = \chi (a) \cdot \Delta (a)^{1/2} \cdot f(g) \quad \forall a \in A, u \in U , g \in G,$$ where $\Delta (a) = |\mathbf{a}^{-1} (a)|^2$ if $G = SL_2 (\bbR)$ and $\Delta (a) = |\mathbf{a}^{-1} (a)|$ if $G = PGL_2 (\Omega)$. Here $G$ acts by $(g^{\prime}f)(g) := f(gg^{\prime})$. The $G$-invariant inner product on $V_{\chi}$ can be expressed as $$ \langle f_1 , f_2 \rangle = \int_{K} f_1 (k) \cdot \overline{f_2 (k)} \cdot dk.$$ For $\theta \in \hat{K}$, let $h^{\chi}_{\theta} \in V_{\chi}$ denote the unique vector determined by $h^{\chi}_{\theta} (k) = \theta (k)$ for $k \in K$, if it exists, and write $\textnormal{types} (V_{\chi}) \subset \hat{K}$ for the subset of $\theta$'s for which it exists. Thus $(h^{\chi}_{\theta})_{\theta \in \textnormal{types} (V_{\chi})}$ is a Hilbert basis for $V_{\chi}$.

\subsection{}

Let us now give several preparatory remarks.

\medskip

First, we do not establish Conjecture \ref{main conj red exact} directly but, rather, establish conditions (1) and (2) of Proposition \ref{prop from folner to exact} (which suffices by that proposition).

\medskip

Second, for a square-integrable irreducible unitary $G$-representation $V$, establishing conditions (1) and (2) of Proposition \ref{prop from folner to exact} with any unit vector $v_0 \in V$ is straight-forward (see the proof of Proposition \ref{prop sq int} for a spelling-out). As is well-known, a tempered irreducible unitary $G$-representation which is not square-integrable is a direct summand in some $V_{\chi}$. Therefore, we establish conditions (1) and (2) of Proposition \ref{prop from folner to exact} for irreducible direct summands in $V_{\chi}$.

\medskip

Third, if $\chi = 1$ when $G = SL_2 (\bbR)$ or if $\chi^2 = 1$ when $G = PGL_2 (\Omega)$, $V_{\chi}$ satisfies Conjecture \ref{main conj red exact} by Theorem \ref{thm V1 c temp}. So we assume throughout:

\begin{equation}\label{eq condition} \chi \neq 1 \textnormal{ if } G=SL_2 (\bbR), \quad \quad \quad \chi^2 \neq 1 \textnormal{ if } G = PGL_2 (\Omega). \end{equation}

\subsection{}

We reduce Conjecture \ref{main conj red exact} for an irreducible summand in $V_{\chi}$ to the following two claims.

\begin{claim}\label{clm SL2 2}
Fix $\chi$ satisfying (\ref{eq condition}). Let $V$ be an irreducible direct summand in $V_{\chi}$. There exist $f \in V$, $r_0 \ge 0$ and $D>0$ such that for all $r \ge r_0$ we have \begin{equation}\label{eq from below} \Ms{f}{f}{r} \geq D \cdot r \end{equation}
\end{claim}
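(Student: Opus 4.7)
The strategy is to reduce the lower bound on $\Ms{f}{f}{r}$ to an asymptotic analysis of matrix coefficients on the split torus $A$, then use the hypothesis on $\chi$ to kill the only oscillatory contribution and extract linear growth from $\textnormal{vol}(A^+_{<r})$.

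In both cases $K$ is compact abelian---$K = SO(2)$ for $SL_2(\bbR)$, and the compact torus coming from the unramified quadratic extension $\Omega[\sqrt{\zeta}]$ for $PGL_2(\Omega)$---so each $K$-type of $V_\chi$ appears with multiplicity one, spanned by $h^\chi_\theta$. Taking $f := h^\chi_\theta$ for some $\theta \in \textnormal{types}(V)$, the $K$-type expansion $af = \sum_\mu c_\mu(a) h^\chi_\mu$ yields $\langle k_2 a k_1 f, f\rangle = \theta(k_1)\theta(k_2)\langle af, f\rangle$, whence $\ms{f}{f}{a} = |\langle af, f\rangle|^2$ and by (\ref{eq M in term of Mcirc}) it suffices to establish $\int_{A^+_{<r}} \omega(a)|\langle af, f\rangle|^2\,da \ge Dr$. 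I would then invoke the asymptotic expansion of matrix coefficients of $K$-finite vectors in a tempered principal series: for $a = \mathbf{a}(t) \in A^+$ with $|t|$ large,
$$\langle af, f\rangle = \Delta(a)^{-1/2}\bigl(c_+\chi(a) + c_-\chi^{-1}(a)\bigr) + o\bigl(\Delta(a)^{-1/2}\bigr),$$
with constants $c_\pm = c_\pm(\chi,\theta) \in \bbC$ whose leading exponents are the Jacquet module exponents $\chi\Delta^{1/2}$ and $\chi^{-1}\Delta^{1/2}$ of $V_\chi$. In the non-Archimedean case this is an exact two-term formula for $|t|$ large (a Macdonald-type formula); in the Archimedean case it is the classical Harish-Chandra expansion. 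Since $\langle af, f\rangle$ does not vanish identically on $A^+$ for $V$ nonzero (as $G = KA^+K$), one may fix $\theta \in \textnormal{types}(V)$ with $(c_+,c_-) \ne (0,0)$.

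Combined with $\omega(a)\Delta(a)^{-1} \to 1$ as $a\to\infty$ in $A^+$ (exact equality for $|t| > 1$ in the non-Archimedean case, by ultrametricity), one obtains
$$\omega(a)|\langle af, f\rangle|^2 = |c_+|^2 + |c_-|^2 + 2\,\textnormal{Re}\bigl(c_+\overline{c_-}\,\chi^2(a)\bigr) + o(1).$$
Integrating over $A^+_{<r}$: the constant term contributes $(|c_+|^2 + |c_-|^2)\cdot\textnormal{vol}(A^+_{<r})$, and $\textnormal{vol}(A^+_{<r})$ grows linearly in $r$ (for $SL_2(\bbR)$, $A^+_{<r}$ parametrizes $|t| \in [1, e^{r/2})$ with multiplicative Haar, giving volume of order $r$; for $PGL_2(\Omega)$, it is a union of $\lfloor r/\log q\rfloor + 1$ cosets of $\calO^\times$). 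The oscillatory integral $\int_{A^+_{<r}}\chi^2(a)\,da$ is bounded uniformly in $r$ whenever $\chi^2 \ne 1$---by an elementary geometric-sum argument in the non-Archimedean case, and by direct computation of $\int_0^{r/2} e^{2isx}\,dx$ (bounded for $s \ne 0$) in the $SL_2(\bbR)$ case with $\chi(\mathbf{a}(t)) = \textnormal{sgn}(t)^\epsilon|t|^{is}$. The $o(1)$ error contributes $o(r)$, which yields the lower bound $\Ms{f}{f}{r} \ge Dr$ for all $r$ sufficiently large.

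The remaining subcase---$SL_2(\bbR)$ with $\chi^2 = 1$ and $\chi \ne 1$, i.e.\ $\chi = \textnormal{sgn}$---is excluded from the above argument since $\chi^2$ is then trivial; here $V_\chi$ is reducible, its irreducible summands being the two limits of discrete series $D_0^\pm$. For such $V = D_0^\pm$, the Jacquet module of the subrepresentation has the single exponent $\textnormal{sgn}\cdot\Delta^{1/2}$ with multiplicity one (no logarithmic enhancement, despite the formal coincidence of exponents, because we restrict to an irreducible summand whose $K$-types are disjoint from those of the other summand), so for an appropriate $K$-weight vector $f$ the matrix coefficient satisfies $|\langle\mathbf{a}(t)f, f\rangle|^2 \sim C t^{-2}$ with $C > 0$; hence $\omega(a)|\langle af, f\rangle|^2 \to C$ and the lower bound follows immediately upon integration. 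The \textbf{main obstacle} is the clean verification that $|c_+|^2 + |c_-|^2 > 0$ for some $\theta \in \textnormal{types}(V)$---this requires explicit input about the Harish-Chandra $c$-functions (or the exact non-Archimedean formula)---together with the separate explicit handling of the limits-of-discrete-series subcase.
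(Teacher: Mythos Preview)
Your approach is essentially the same as the paper's: both reduce to the asymptotic behaviour of $\langle a f, f\rangle$ along $A^+$ and then integrate. In the Archimedean case your argument and the paper's coincide almost verbatim, including the separate treatment of $\chi=\textnormal{sgn}$.

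In the non-Archimedean case there is a genuine, if minor, difference in execution. You take $f=h^\chi_\theta$, which gives the clean identity $\ms{f}{f}{a}=|\langle af,f\rangle|^2$, at the cost of a two-term asymptotic $\Delta(a)^{-1/2}(c_+\chi(a)+c_-\chi^{-1}(a))$ whose cross term you then kill using $\chi^2\neq 1$. The paper instead chooses $v$ to be a smooth vector whose image in the Jacquet module is a $\chi$-eigenvector; Casselman's canonical pairing then yields $|\langle av,v\rangle|^2=C\cdot|a|^{-1}$ exactly, with a single term and no oscillation to manage. The price is that $v$ is no longer a $K$-eigenvector, so $\ms{v}{v}{a}$ is not simply $|\langle av,v\rangle|^2$; the paper recovers a lower bound by restricting the $K\times K$ integral to $K_v\times K_v$ for an open compact $K_v$ fixing $v$. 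Each choice trades one convenience for another.

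One point to tighten: your sentence ``since $\langle af,f\rangle$ does not vanish identically on $A^+$ \ldots one may fix $\theta$ with $(c_+,c_-)\neq(0,0)$'' does not quite close the gap, because non-vanishing at some $a$ (e.g.\ $a=1$) does not by itself rule out $c_+=c_-=0$ with the matrix coefficient supported near the identity. You correctly flag this as the main obstacle and point to $c$-function input; the paper sidesteps it in the non-Archimedean case precisely by its Jacquet-eigenvector choice (so $c_+\neq 0$ by construction, via non-degeneracy of Casselman's pairing), and in the Archimedean case by citing the explicit hypergeometric formulas where the non-vanishing of the leading constants is read off directly.
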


\begin{claim}\label{clm SL2 2b}
Fix $\chi$ satisfying (\ref{eq condition}). There exist $r_0 > 0$ and $C>0$ (depending on $\chi$) such that for all $a \in A^+ \smallsetminus A^+_{<r_0}$ we have \begin{equation}\label{eq conj SL2 2} \ms{f_1}{f_2}{a} \leq C \cdot \omega (a)^{-1} \cdot ||f_1||^2 \cdot ||f_2||^2 \quad \quad \forall f_1, f_2 \in V_{\chi}.\end{equation}
\end{claim}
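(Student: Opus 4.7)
The plan is in two stages. First, I exploit that $K$ is compact and \emph{abelian} in both cases---$SO(2)$ in $SL_2(\bbR)$ and the chosen non-split anisotropic torus in $PGL_2(\Omega)$. Consequently the $K$-types of $V_\chi$ are one-dimensional, so the family $(h^\chi_\theta)_{\theta \in \textnormal{types}(V_\chi)}$ is an orthonormal basis of $V_\chi$ on which $K$ acts diagonally, $k h^\chi_\theta = \theta(k) h^\chi_\theta$. Expanding $f_1 = \sum_\theta c_\theta h^\chi_\theta$ and $f_2 = \sum_{\theta'} c'_{\theta'} h^\chi_{\theta'}$ and writing out $|\langle k_2 a k_1 f_1, f_2\rangle|^2$, the integration over $K \times K$ kills all cross terms by orthogonality of characters of $K$, leaving the diagonal expression
$$
\ms{f_1}{f_2}{a} \;=\; \sum_{\theta, \theta'} |c_\theta|^2 \, |c'_{\theta'}|^2 \, |M_{\theta, \theta'}(a)|^2, \qquad M_{\theta, \theta'}(a) := \langle a h^\chi_\theta, h^\chi_{\theta'}\rangle.
$$
By Parseval ($\sum_\theta |c_\theta|^2 = \|f_1\|^2$, similarly for $f_2$), the claim reduces to the uniform matrix-coefficient bound $\sup_{\theta, \theta'}|M_{\theta, \theta'}(a)|^2 \leq C_\chi \,\omega(a)^{-1}$ for all $a \in A^+ \smallsetminus A^+_{<r_0}$.

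Second, I obtain this uniform estimate from the standard asymptotic expansion of matrix coefficients of a tempered principal series---Harish-Chandra's $c$-function expansion in the real case, and the Macdonald/Casselman formula in the $p$-adic case. For $a \in A^+$ sufficiently far from $1$ and any $v_1, v_2 \in V_\chi$,
$$
\langle a v_1, v_2\rangle \;=\; \alpha^+(v_1, v_2;\chi)\,\chi(a)\,\omega(a)^{-1/2} \;+\; \alpha^-(v_1, v_2;\chi)\,\chi^{-1}(a)\,\omega(a)^{-1/2} \;+\; R(a; v_1, v_2),
$$
where the sesquilinear forms $\alpha^\pm(\,\cdot\,,\,\cdot\,;\chi)$ are bounded by a constant depending only on $\chi$---they are matrix entries of the standard intertwining operator $V_\chi \to V_{\chi^{-1}}$, which is bounded exactly under condition~(\ref{eq condition}); in the excluded case $\chi = \chi^{-1}$ one would get a logarithmic resonance, already visible in the behaviour $\Xi_G \sim \omega^{-1/2}\log$ for the spherical $\chi = 1$---and the remainder satisfies $|R(a; v_1, v_2)| \leq C'_\chi \,\omega(a)^{-1/2-\delta}\|v_1\|\|v_2\|$ uniformly for some $\delta>0$. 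Substituting $v_1 = h^\chi_\theta$, $v_2 = h^\chi_{\theta'}$ then gives $|M_{\theta, \theta'}(a)| \leq 2 C_\chi \,\omega(a)^{-1/2} + O_\chi(\omega(a)^{-1/2-\delta})$ uniformly in types, which squares to the desired estimate once $r_0$ is chosen large enough for the error term to be absorbed into the constant.

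The main obstacle is ensuring the uniformity in $\theta, \theta'$ of both the leading coefficients $\alpha^\pm$ and the remainder $R$. In the real case this reduces to an oscillatory-integral estimate: in the non-compact picture $V_\chi \cong L^2(\bbR, \pi^{-1} dx)$ the $K$-type vectors read $\tilde h^\chi_n(x) = (1+x^2)^{-1/2 - i\nu/2} e^{-in\arctan x}$ and the matrix coefficient becomes $\langle \mathbf{a}(t) f_1, f_2\rangle = \chi(\mathbf{a}(t))\,|t|^{-1}\pi^{-1}\int \tilde f_1(y)\,\overline{\tilde f_2(y t^{-2})}\,dy$, so I must show that the residual integral stays bounded uniformly in the types as $|t| \to \infty$; the necessary cancellation of the logarithmic divergence comes from the oscillation $|y|^{i\nu}$ near infinity (active when $\nu \neq 0$) combined with the parity constraint $n \equiv \epsilon \pmod 2$ (which carries weight when $\epsilon \neq 0$). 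In the $p$-adic case the analogue is a combinatorial $K$-orbit analysis on $\bbP^1(\Omega)$ via the Bruhat decomposition. In both settings, the neighbourhood of the reducibility locus $\chi^2 = 1$ is where uniform constants deteriorate, and handling the approach to this boundary is the delicate portion of the argument.
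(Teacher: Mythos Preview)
Your first stage---the reduction via Parseval on the abelian $K$ to the uniform $K$-type bound $\sup_{\theta,\theta'}|\langle a h^\chi_\theta,h^\chi_{\theta'}\rangle|^2\le C_\chi\,\omega(a)^{-1}$---is correct and is exactly what the paper does (this is the paper's Claim~\ref{clm SL2 3} and the short reduction preceding it).

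The gap is in the second stage. The asymptotic expansion you invoke does \emph{not} come with a remainder estimate of the form $|R(a;v_1,v_2)|\le C'_\chi\,\omega(a)^{-1/2-\delta}\|v_1\|\|v_2\|$ uniform in $v_1,v_2$; the standard expansions (Casselman--Mili\v ci\'c in the real case, Casselman's Jacquet-module stabilization in the $p$-adic case) are statements about \emph{fixed} $K$-finite vectors, and the threshold $r_0$ beyond which the leading term dominates grows with the $K$-type. In the $p$-adic case this is especially transparent: the exact formula $\langle a v_1,v_2\rangle=(\text{Jacquet pairing})$ holds only once $a$ is far enough into $A^+$ relative to the level of $v_1,v_2$, so for $h^\chi_\theta$ of large conductor there is no fixed $r_0$ that works. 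Your boundedness claim for $\alpha^\pm$ via the unitary intertwiner is fine, but that only handles the leading term; the whole difficulty is the remainder, and asserting its uniform control is essentially restating the claim. You recognize this in your final paragraph, but the direct approaches you then sketch (oscillatory integral in the non-compact picture; ``combinatorial $K$-orbit analysis'') are too vague to count as an argument.

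For comparison, the paper abandons asymptotic expansions entirely at this point. In the real case with $\chi|_{\mathbf a(\bbR^\times_{>0})}\neq 1$ it cites \cite[Theorem~2.1]{BrCoNiTa}, which carries out precisely the uniform oscillatory-integral estimate you gesture at; in the remaining real case $\chi(\mathbf a(-1))=-1$ it writes $\cosh(x)\langle \mathbf a(e^x)h^\chi_{\theta_n},h^\chi_{\theta_m}\rangle$ explicitly as $(\tfrac{1-t}{2})^{k/2}P^{(k,0)}_r(t)$ and recognizes this as a matrix coefficient of unit vectors in an irreducible unitary representation of $\mathrm{SU}(2)$, whence $\le 1$. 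In the $p$-adic case the paper writes the matrix coefficient as an explicit integral over $\Omega$, isolates the essential range $\varpi^k t\calO\smallsetminus\varpi^{-k}\calO$, passes via the exponential map of the anisotropic torus to an integral of the shape $\int \psi_1(\alpha_1(x))\psi_2(\alpha_2(\varpi^n x^{-1}))\chi(x)\,\tfrac{dx}{|x|}$, and then proves a self-contained stationary-phase-type estimate (Proposition~\ref{prop padic estimate}) giving the bound $C(c(\chi)+d(\chi)+1)$ uniform in $\psi_1,\psi_2$. So the substance of the claim lies in these concrete estimates, not in any general asymptotic theory.
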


\begin{proof}[Proof (of Conjecture \ref{main conj red exact}  for summands in $V_{\chi}$ given Claim \ref{clm SL2 2} and Claim \ref{clm SL2 2b} for $\chi$).]
	
	Let $V$ be an irreducible direct summand in $V_{\chi}$. Let $f$, $r_0$, $D$ and $C$ be as in Claim \ref{clm SL2 2} and as in Claim \ref{clm SL2 2b} (taking $r_0$ to be the maximum of the values from the two statements).
	
	\medskip
	
	In order to verify Conjecture \ref{main conj red exact} for $V$, we will verify the conditions (1) and (2) of Proposition \ref{prop from folner to exact}, where for $v_0$ we take our $f$.
	
	\medskip
	
	Using (\ref{eq conj SL2 2}) we obtain the existence of $E,E'>0$ such that for all $r_0 \leq r_1 < r_2$ we have $$ \Ms{f_1}{f_2}{r_2} - \Ms{f_1}{f_2}{r_1} \leq E \cdot \textnormal{vol}_A ( A^+_{<r_2} \smallsetminus A^+_{<r_1} )  \cdot || f_1||^2 \cdot || f_2 ||^2 \leq $$ $$ \leq E' \cdot (1 + (r_2 - r_1)) \cdot || f_1||^2 \cdot || f_2 ||^2.$$ From this and (\ref{eq from below}) the conditions (1) and (2) of Proposition \ref{prop from folner to exact} are immediate.
	
\end{proof}

\subsection{}\label{sec proof of clm SL2 2}

Let us prove Claim \ref{clm SL2 2}.

\begin{proof}[Proof (of Claim \ref{clm SL2 2}).]

	Let $V$ be an irreducible direct summand of $V_{\chi}$.
	
	\medskip
	
	Let us first treat the case $G = PGL_2 (\Omega)$. We use the (normalized) Jacquet $A$-module $J(-)$ with respect to $G \hookleftarrow AU \twoheadrightarrow A$. We denote by $\underline{V} \subset V$ the subspace of smooth vectors. $J(\underline{V})$ is isomorphic to $\bbC_{\chi} \oplus \bbC_{\chi^{-1}}$. We consider $v \in \underline{V}$ whose projection under the canonical $\underline{V} \twoheadrightarrow J(\underline{V})$ is non-zero and is an $A$-eigenvector with eigencharacter $\chi$. By Casselman's canonical pairing theory there exists a non-zero $\alpha \in J(\underline{V})^*$ which is $A$-eigenvector with eigencharacter $\chi^{-1}$ such that $\langle a v , v \rangle = |a |^{-1/2} \alpha (a v)$ whenever $a \in A^+ \smallsetminus A^+_{<r_0}$, for large enough $r_0 \ge 0$. Since we have $\alpha (a v) = \chi (a) \cdot \alpha (v)$ and $\alpha (v) \neq 0$, we deduce that for some $C>0$ we have $|\langle a v , v \rangle|^2 = C \cdot |a|^{-1} $ for $a \in A^+ \smallsetminus A^+_{<r_0}$. Let $K_v \subset K$ be an open compact subgroup, small enough so that $K_v v = v$. We have, again for $a \in A^+ \smallsetminus A^+_{< r_0}$: $$ \ms{v}{v}{a} = \int_{K \times K} |\langle k_2 a k_1 v , v \rangle |^2 \cdot dk_1 dk_2 \ge $$ $$ \ge \int_{K_v \times K_v} |\langle k_2 a k_1 v , v \rangle |^2 \cdot dk_1 dk_2 = C^{\prime} \cdot |\langle a v , v \rangle |^2$$ for some $C^{\prime}>0$ and so $\ms{v}{v}{a} \ge C^{\prime \prime} \cdot |a|^{-1}$ for some $C^{\prime \prime}>0$. From this we obtain the desired.
	
	\medskip
	
	Let us now treat the case $G = SL_2 (\bbR)$. Fix any $\theta \in {\rm types} (V_{\chi})$. The leading asymptotic of $K$-finite vectors are well-known, and can be computed from explicit expressions in terms of the hypergeometric function (see \cite[\S 6.5]{KlVi}). In the case $\chi|_{\mathbf{a} (\bbR^{\times}_{>0})} \neq 1$, denoting by $0 \neq s \in \bbR$ the number for which $\chi (\mathbf{a} (t)) = t^{is}$ for all $t \in \bbR^{\times}_{>0}$, we have $$ \langle \mathbf{a} (e^x) h^{\chi}_{\theta} , h^{\chi}_{\theta} \rangle \sim e^{-x} \cdot (E_1 \cdot e^{-isx} + E_2 \cdot e^{isx} + o(1)) \quad\quad (x \to +\infty)$$ for some non-zero $E_1$ and $E_2$ and so $$ |\langle \mathbf{a} (e^x) h^{\chi}_{\theta} , h^{\chi}_{\theta} \rangle|^2 \sim e^{-2x} \left( D + E_3 \cdot e^{-2isx} + E_4 \cdot e^{2isx} + o(1) \right) \quad\quad (x \to +\infty)$$ for some $D>0$, $E_3$ and $E_4$. From this we obtain the desired. In the case $\chi|_{\mathbf{a} (\bbR^{\times}_{>0})} = 1$, and so $\chi (\mathbf{a} (-1)) = - 1$, we have $$ \langle \mathbf{a} (e^x) h_{\theta}^{\chi} , h_{\theta}^{\chi} \rangle \sim E \cdot e^{-x}  \quad\quad (x \to +\infty) $$ for some non-zero $E$ and so $$|  \langle \mathbf{a} (e^x) h_{\theta}^{\chi} , h_{\theta}^{\chi} \rangle |^2 \sim D \cdot e^{-2x}  \quad\quad (x \to +\infty) $$ for some $D>0$. From this we obtain the desired.
	
	\end{proof}

\subsection{}

We further reduce Claim \ref{clm SL2 2b}.

\begin{claim}\label{clm SL2 3}
	Fix $\chi$ satisfying (\ref{eq condition}). There exist $r_0 > 0$ and $C>0$ (depending on $\chi$) such that for all $\theta,\eta \in \textnormal{types} (V_{\chi})$ and all $a \in A^+ \smallsetminus A^+_{<r_0}$ we have \begin{equation}\label{eq thm SL2R 3} |\langle a h^{\chi}_{\theta} , h^{\chi}_{\eta} \rangle|^2 \leq C \cdot \omega(a)^{-1} .\end{equation}
\end{claim}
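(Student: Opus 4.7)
The overall strategy is to express $\langle a h_\theta^\chi, h_\eta^\chi\rangle$ as an oscillatory integral over the compact abelian group $K$, and to extract a bound of order $\omega(a)^{-1/2}$ uniformly in the $K$-types $\theta, \eta$, by exploiting the non-triviality of $\chi$ imposed by (\ref{eq condition}).

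The first step is to write out the matrix coefficient explicitly. Using the defining transformation law of $V_\chi$ together with the Iwasawa-type decomposition $ka = u(k,a)\mathbf{a}(t(k,a)) k'(k,a)$ (defined for a.e.\ $k \in K$), one obtains
$$\langle a h_\theta^\chi, h_\eta^\chi \rangle = \int_K \chi(\mathbf{a}(t(k,a)))\, \Delta(\mathbf{a}(t(k,a)))^{1/2}\, \theta(k'(k,a)) \, \overline{\eta(k)}\, dk.$$
The characters $\chi, \theta, \eta$ contribute unit-modulus factors while $\Delta^{1/2}$ carries the Harish-Chandra asymptotics. The naive bound (taking absolute values inside the integral) recovers the $\Xi$-majorant $\Xi_G(a)$, but $\Xi_G(a)^2\omega(a)$ retains a residual logarithmic factor in both the real and $p$-adic settings (growing like $(\log|\mathbf{a}^{-1}(a)|)^2$ for $SL_2(\bbR)$ and like $n^2$ for $a = \mathbf{a}(\varpi^{-n})$ in $PGL_2(\Omega)$), so a strict improvement is needed.

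The second step is to extract this improvement from the oscillation of $k \mapsto \chi(\mathbf{a}(t(k,a)))$. Under (\ref{eq condition}) this phase is genuinely oscillatory on $K$ as $a$ moves deep into $A^+$, and cancellation yields the missing decay. In the $SL_2(\bbR)$ case, parametrizing $K = SO(2)$ by an angle reduces the integral to a one-variable oscillatory integral whose analysis by stationary phase / van der Corput (as in \cite{BrCoNiTa}) produces the required $|\mathbf{a}^{-1}(a)|^{-1}$ bound; equivalently, one can invoke uniform estimates on the hypergeometric functions ${}_2F_1$ in which principal series matrix coefficients are classically expressed. In the $PGL_2(\Omega)$ case, the integral reduces to a finite sum over cosets of a small open subgroup of $K$, and the hypothesis $\chi^2 \neq 1$ forces cancellation in this sum, yielding the required $|\mathbf{a}^{-1}(a)|^{-1/2}$ decay.

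The main obstacle is achieving uniformity in the $K$-types $\theta, \eta$, which range over an infinite discrete set. Pointwise-in-types asymptotic analysis (of the kind used in the proof of Claim \ref{clm SL2 2}) yields the correct decay rate for each individual pair but with type-dependent constants that do not collapse into a single $C$. The structural feature enabling uniformity is that in the integral representation above, $\theta$ and $\eta$ appear only through the unit-modulus factor $\theta(k'(k,a))\overline{\eta(k)}$, so the effective scale of oscillation in the integrand is dictated by the $\chi$-phase alone. The estimates from \cite{BrCoNiTa} are formulated so as to tolerate such bounded auxiliary modulations of the integrand, yielding the uniform bound $|\langle a h_\theta^\chi, h_\eta^\chi\rangle| \leq C \omega(a)^{-1/2}$. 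Squaring gives (\ref{eq thm SL2R 3}). The excluded cases $\chi = 1$ (real) and $\chi^2 = 1$ ($p$-adic) are exactly those in which the oscillation of $\chi$ is too weak, and no improvement over $\Xi_G$ is possible.
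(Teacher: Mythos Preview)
Your overall strategy---writing the matrix coefficient as an oscillatory integral and citing \cite{BrCoNiTa} for the real case---matches the paper's approach, but there are two genuine gaps.

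First, in the $SL_2(\bbR)$ case, \cite{BrCoNiTa} covers only the case $\chi|_{\mathbf{a}(\bbR^{\times}_{>0})} \neq 1$. When $\chi|_{\mathbf{a}(\bbR^{\times}_{>0})} = 1$ (so $\chi(\mathbf{a}(-1)) = -1$, the limit-of-discrete-series case), the $\chi$-phase you rely on is constant along the connected component of $A$ and provides no oscillation at all; your mechanism breaks down. The paper handles this case separately by expressing $\cosh(x)\langle \mathbf{a}(e^x) h^\chi_{\theta_n}, h^\chi_{\theta_m}\rangle$ explicitly via ${}_2F_1$, rewriting it in terms of Jacobi polynomials $(\tfrac{1-t}{2})^{k/2} P_r^{(k,0)}(t)$, and then recognizing this as a matrix coefficient of unit vectors in an irreducible unitary $SU(2)$-representation, whence it is bounded by $1$. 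Your sketch does not distinguish this case.

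Second, and more seriously, your $p$-adic argument is not a proof. The sentence ``the integral reduces to a finite sum over cosets \ldots\ and $\chi^2 \neq 1$ forces cancellation'' asserts exactly what must be shown. Your heuristic that $\theta,\eta$ enter only as ``bounded auxiliary modulations'' so that ``the effective scale of oscillation is dictated by $\chi$ alone'' is misleading: characters $\theta,\eta$ of $K$ of high conductor oscillate arbitrarily fast and genuinely enter the phase, so uniformity in the $K$-types is the heart of the matter, not a side remark. (Also, \cite{BrCoNiTa} treats real groups and says nothing about the $p$-adic case.) The paper realizes $V_\chi$ on functions on $\Omega$, writes the matrix coefficient as an integral over $\Omega$, uses the exponential map $p\calO \to K'$ to convert $\theta,\mu$ into additive characters $\psi_1,\psi_2$ of $\varpi\calO$, and reduces everything to a separate, carefully stated estimate (Proposition \ref{prop padic estimate}): for power series $\alpha_i \in \calO^\times x + x^2\Omega[[x]]$ and nontrivial $\chi$, the integral $\int \psi_1(\alpha_1(x))\psi_2(\alpha_2(\varpi^n x^{-1}))\chi(x)\,\tfrac{dx}{|x|}$ over an annulus is bounded by $C(c(\chi)+d(\chi)+1)$ \emph{independently of $\psi_1,\psi_2$}. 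The proof of that proposition is a case-by-case analysis in the valuation of $x$, and is where the actual work for uniformity lives. Your proposal contains no analogue of this step.
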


\begin{proof}[Proof (of Claim \ref{clm SL2 2b} for $\chi$ given Claim \ref{clm SL2 3} for $\chi$).]

	Let $f_1 , f_2 \in V_{\chi}$ and write $$ f_1 = \sum_{\theta \in \textnormal{types} (V_{\chi})} c_{\theta} \cdot h^{\chi}_{\theta}, \quad f_2 = \sum_{\theta \in \textnormal{types} (V_{\chi})} d_{\theta} \cdot h^{\chi}_{\theta}$$ with $c_{\theta} , d_{\theta} \in \bbC$. Using Fourier expansion of the function $(k_1 , k_2) \mapsto \langle a k_1 f_1 , k_2 f_2 \rangle$ on $K \times K$ we have, for $a \in A^+ \smallsetminus A^+_{<r_0}$: $$ \ms{f_1}{f_2}{a} = \int_{K\times K} \left| \langle a k_1 f_1 , k_2 f_2 \rangle \right|^2 \cdot dk_1 dk_2 = $$ $$ = \sum_{\theta,\eta \in \textnormal{types} (V_{\chi})} |c_{\theta}|^2 \cdot |d_{\eta}|^2 \cdot \left| \langle a h^{\chi}_{\theta} ,  h^{\chi}_{\eta} \rangle \right|^2 \leq C  \cdot \omega (a)^{-1} \cdot ||f_1||^2 \cdot ||f_2||^2.$$

\end{proof}

\subsection{}

Let us now establish Claim \ref{clm SL2 3} in the case $G = SL_2 (\bbR)$:

\begin{proof}[Proof (of Claim \ref{clm SL2 3} in the case $G = SL_2 (\bbR)$).]
	In the case $\chi |_{\mathbf{a} (\bbR^{\times}_{>0})} \neq 1$, this is the contents of \cite[Theorem 2.1]{BrCoNiTa} (which contains a stronger claim, incorporating $\chi$ into the inequality). Let us therefore assume $\chi|_{\mathbf{a} (\bbR^{\times}_{>0})} = 1$ and so $\chi (\mathbf{a} (-1)) = -1$. For $n \in \bbZ$, let us denote by $\theta_n$ the character of $K$ given by $\mtrx{c}{-s}{s}{c} \mapsto (c+is)^n$. We want to see that $$ \cosh (x) \left| \langle \mathbf{a} (e^x) h_{\theta_n}^{\chi} , h_{\theta_m}^{\chi} \rangle \right|$$ is bounded as we vary $x \in [0,+\infty)$ and $m,n \in 1 + 2 \bbZ$. We have $V_{\chi} = V_{\chi}^- \oplus V_{\chi}^+$, where $V_{\chi}^-$ and $V_{\chi}^+$ are irreducible unitary $G$-representations, ${\rm types} (V_{\chi}^-) = \{ \theta_n \ : \ n \in -1 - 2 \bbZ_{\ge 0}\}$ and ${\rm types} (V_{\chi}^+) = \{ \theta_n \ : \ n \in 1 + 2 \bbZ_{\ge 0}\}$. Since the matrix coefficient in question vanishes when $m \in -1 - 2 \bbZ_{\ge 0}$ and $n \in 1 + 2 \bbZ_{\ge 0}$ or vice versa, and since the matrix coefficient in question does not change when we replace $(n ,m)$ by $(-n , -m)$, we can assume $m,n \in 1 + 2 \bbZ_{ \ge 0}$. Furthermore, by conjugating by $\mtrx{0}{1}{-1}{0}$ it is straight-forward to see that we can assume that $m \ge n$. We denote $k := \tfrac{m-n}{2}$. Let us use the well-known concrete expression of matrix coefficients in terms of the hypergeometric function (see \cite[\S 6.5]{KlVi}): $$ \cosh (x) \cdot  \langle \mathbf{a} (e^x) h_{\theta_n}^{\chi} , h_{\theta_m}^{\chi} \rangle =  \tanh (x)^k \cdot \frac{(1+\frac{n-1}{2})_k}{k!} \cdot {}_2 F_1 \left( \frac{n-1}{2}+k+1 , \ -\frac{n-1}{2} , \ k+1 , \ \tanh (x)^2 \right).$$ We want to show that this expression is bounded as we vary $x \in [0,+\infty)$, $n \in 1+2 \bbZ_{\ge 0}$ and $k \in \bbZ_{\ge 0}$. Performing a change of variables $\frac{1-t}{2} := \tanh (x)^2$, denoting $r := \tfrac{n-1}{2}$ and interpreting in terms of Jacobi polynomials, we can rewrite this last expression as: $$Q^k_r (t) := (\tfrac{1-t}{2})^{k/2} \cdot P^{(k,0)}_{r} (t).$$ We want to see that $Q^k_r (t)$ is bounded as we vary $t \in [-1,1]$, $r \in \bbZ_{\ge 0}$ and $k \in \bbZ_{\ge 0}$. But, it is known that under a suitable interpretation of the variable $t$, $Q^k_r (t)$ is equal to a matrix coefficient of unit vectors in an irreducible unitary representation of ${\rm SU} (2)$, see, for example, \cite[\S 6.3]{KlVi}. Therefore, we have $ |Q^k_r (t)| \leq 1$ for all $t \in [-1,1]$, $r \in \bbZ_{\ge 0}$ and $k \in \bbZ_{\ge 0}$, as desired. See also \cite[Equation (20)]{HaSc} for a direct proof of this last inequality.
		
\end{proof}

\subsection{}

Finally, we want to establish Claim \ref{clm SL2 3} in the case $G = PGL_2 (\Omega)$. We will use the following proposition:

\begin{proposition}\label{prop padic estimate}
There exists $C>0$ such that the following holds. Let $\psi_1$ and $\psi_2$ be unitary characters of $\varpi \calO$. Let $\alpha_1 , \alpha_2 \in \calO^{\times} x + x ^2\Omega [[x ]] \subset \Omega [[x]]$ be power series. Let $\chi$ be a non-trivial unitary character of $\Omega^{\times}$. Denote by $c(\chi)$ the number $0$ if $\chi|_{\calO^{\times}} = 1$ and otherwise the smallest number $c \in \bbZ_{\ge 1}$ for which $\chi|_{1+\varpi^c \calO} = 1$. Also, denote by $d(\chi)$ the number $1/|1-\chi (\varpi)|$ if $c(\chi) = 0$ and the number $0$ if $c(\chi) \neq 0$. Let $0 < m_1 \leq m_2 \leq n$ be integers. Then $$ \left| \int_{\varpi^{m_1} \calO \smallsetminus \varpi^{m_2} \calO} \psi_1 (\alpha_1 (x)) \psi_2 (\alpha_2 (\varpi^n x^{-1})) \chi (x) \cdot \frac{dx}{|x|} \right| \leq C(c(\chi)+d(\chi) + 1).$$
\end{proposition}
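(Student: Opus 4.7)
The plan is to decompose the integral into shells indexed by $k := v(x) \in [m_1, m_2-1]$, and to show that all but $O(c(\chi)+d(\chi)+1)$ shells contribute zero. Writing $x = \varpi^k u$ with $u \in \calO^{\times}$ and using multiplicative invariance of $dx/|x|$, the integral becomes $\sum_k I_k$ where
\[
I_k = \chi(\varpi)^k \int_{\calO^{\times}} F_k(u)\,\chi(u)\,du, \qquad F_k(u) := \psi_1(\alpha_1(\varpi^k u))\,\psi_2(\alpha_2(\varpi^{n-k} u^{-1})).
\]
Let $e_i$ denote the conductor of $\psi_i$ (smallest $e$ with $\psi_i|_{\varpi^e\calO}=1$), and set $r_1(k) := \max(0,e_1-k)$, $r_2(k) := \max(0,e_2-(n-k))$, $r(k) := \max(r_1(k), r_2(k))$. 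A direct Taylor expansion of $\alpha_i$ (reading $\alpha_i \in \calO^{\times}x + x^2\calO[[x]]$ for the series to converge on $\varpi\calO$) shows $F_k$ is locally constant on cosets of $1+\varpi^{r(k)}\calO$.

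Shells naturally split into a \emph{middle region} $\{k \in [e_1, n-e_2]\}$, where $r(k)=0$ and $F_k\equiv 1$, and a \emph{boundary region}, where $r(k)\geq 1$. In the middle region $I_k = \chi(\varpi)^k\int_{\calO^{\times}}\chi\,du$ vanishes if $c(\chi)\geq 1$, and otherwise the sum $\sum_k \chi(\varpi)^k\vol(\calO^{\times})$ is a geometric series bounded by $2\vol(\calO^{\times})\,d(\chi)$. For a boundary shell with $r(k) < c(\chi)$, decomposing $\calO^{\times}$ into cosets of $1+\varpi^{r(k)}\calO$ on which $F_k$ is constant and using that $\int_{1+\varpi^{r(k)}\calO}\chi = 0$ (non-trivial restriction) immediately gives $I_k=0$.

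The heart of the argument handles boundary shells with $r(k) > c(\chi)$. Substitute $u = u_1(1+\varpi^s w)$ with $s$ chosen so that $\max(c(\chi),1) \leq s \leq r(k)-1$ and $s \geq (e_i - 2 k_i)/2$ for $i=1,2$, where $k_1:=k$, $k_2:=n-k$. The first condition makes $\chi(1+\varpi^s w) = 1$; the second makes all Taylor terms of $\alpha_i(\varpi^{k_i} u_1(1+\varpi^s w))$ of order $\geq 2$ land in $\varpi^{e_i}\calO$, leaving only the linear phase $\psi_i(\pm c_i(u_1)\varpi^{k_i+s} w)$ with $c_i(u_1)\in\calO^{\times}$. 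The resulting inner integral $\int_\calO \psi_1(c_1\varpi^{k+s}w)\psi_2(-c_2\varpi^{n-k+s}w)\,dw$ is the integral over $\calO$ of an additive character; a valuation count shows that when $r_1(k) \neq r_2(k)$, this character has conductor $r(k)-s > 0$ and the integral vanishes. Hence $I_k = 0$. A valid $s$ exists whenever $r(k) \geq \max(c(\chi),1)+1$, so non-vanishing boundary shells must satisfy $r(k) \in \{c(\chi), c(\chi)+1\}$ (or $r(k)=1$ when $\chi$ is unramified), yielding $O(1)$ such shells, each bounded trivially by $|I_k|\leq\vol(\calO^{\times})\leq 1$.

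The main obstacle is the \emph{resonance configuration} $r_1(k) = r_2(k)$, which occurs at the single shell $k = (n+e_1-e_2)/2$: here the two linear phases from $\psi_1, \psi_2$ can partially cancel, giving an additive character of strictly smaller conductor than $r(k)-s$ that may fail to vanish on $\calO$, so the vanishing step must be replaced by the trivial estimate $|I_k| \leq 1$. Since this resonance is a single shell, it adds $O(1)$ to the boundary count. Combining the boundary contribution $O(1)$ with the middle contribution $O(d(\chi)+1)$ yields the desired bound $C(c(\chi)+d(\chi)+1)$, whose generous $c(\chi)$-term comfortably absorbs the exceptional shells.
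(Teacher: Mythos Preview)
Your approach is sound and reaches the goal, but it differs substantially from the paper's. The paper also decomposes into shells $J^m$, but instead of your local linearization on cosets $u_1(1+\varpi^s\calO)$, it performs a \emph{global} analytic change of variable on all of $\calO^{\times}$: when one phase dominates (say $|b| = |(a_2\varpi^{n-m})/(a_1\varpi^m)| < q^{-c(\chi)}$), the map
\[
\beta(x) := \varpi^{-m}\alpha_1(\varpi^m x) + b\cdot \varpi^{m-n}\alpha_2(\varpi^{n-m} x^{-1})
\]
is an invertible analytic self-map of $\calO^{\times}$ preserving cosets of $1+\varpi^{c(\chi)}\calO$, and the substitution $x \mapsto \beta^{-1}(x)$ collapses $J^m$ to the standard Gauss-type integral $\chi(\varpi)^m \int_{\calO^{\times}} \psi(a_1\varpi^m x)\chi(x)\,dx$. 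This is then dispatched by the usual trichotomy on $|a_1\varpi^m|$. Your method is more in the spirit of van der Corput / stationary phase, while the paper's is an exact algebraic reduction; the paper's change of variable avoids any need to track quadratic error terms or a resonance shell.

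One correction to your linearization: for the $\psi_2$ side the substitution $u=u_1(1+\varpi^s w)$ gives $u^{-1}=u_1^{-1}(1+\varpi^s w)^{-1}$, whose quadratic term in $w$ is $\varpi^{2s}u_1^{-1}w^2$, so the quadratic part of $\alpha_2(\varpi^{n-k}u^{-1})$ has valuation $(n-k)+2s$, not $2(n-k)+2s$. The correct condition is therefore $s \geq r_2(k)/2$ rather than your $(e_2 - 2k_2)/2$. This does not break the argument: the natural choice $s = r(k)-1$ still satisfies $s \geq r_2(k)/2$ whenever $r(k) \geq 2$ (since $r_2(k) \leq r(k)$ gives $r_2(k)/2 \leq r(k)/2 \leq r(k)-1$), so the set of exceptional shells remains $O(1)$ and your bound goes through.
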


\begin{proof}[Proof (of Claim \ref{clm SL2 3} in the case $G = PGL_2 (\Omega)$ given Proposition \ref{prop padic estimate}).]
Let us calculate more concretely the inner product appearing in Claim \ref{clm SL2 3}. We can normalize the inner product on $V_{\chi}$ so that for $f_1, f_2 \in V_{\chi}$ we have $$ \langle f_1 , f_2 \rangle = \int_{\Omega} f_1 \mtrx{1}{0}{x}{1} \overline{f_2 \mtrx{1}{0}{x}{1}} \cdot dx.$$ We then calculate $$ \left\langle \mtrx{t}{0}{0}{1} h^{\chi}_{\theta} , h^{\chi}_{\mu} \right\rangle = |t|^{-1/2} \chi (t) \int_{\Omega}  h^{\chi}_{\theta} \mtrx{1}{0}{x}{1} \overline{h^{\chi}_{\mu} \mtrx{1}{0}{t^{-1} x}{1}} \cdot dx  $$ and thus we want to see that $$ I_{\theta , \mu} (t) :=  \int_{\Omega}  h^{\chi}_{\theta} \mtrx{1}{0}{x}{1} \overline{h^{\chi}_{\mu} \mtrx{1}{0}{t^{-1} x}{1}} \cdot dx $$ is bounded independently of $\theta , \mu \in \hat{K}$ and $t \in F$ satisfying $|t| \ge 1$. In general, for $\theta \in \hat{K}$ and $x \in \Omega$, we have $$ h^{\chi}_{\theta} \mtrx{1}{0}{x}{1} = \frac{\chi^{-1} (1 - \zeta x^2)}{|1 - \zeta x^2|^{1/2}} \theta \mtrx{1}{\zeta x}{x}{1}.$$ And so we obtain \begin{equation}\label{eq for I} I_{\theta , \mu} (t) = \int_{\Omega} \frac{\chi^{-1} (1 - \zeta x^2)}{|1 - \zeta x^2|^{1/2}}  \frac{\chi (1 - \zeta t^{-2} x^2)}{|1 - \zeta t^{-2} x^2|^{1/2}} \theta \mtrx{1}{\zeta x}{x}{1} \mu^{-1} \mtrx{1}{\zeta t^{-1} x}{t^{-1} x}{1} \cdot dx. \end{equation} If for $D \subset F$ we denote by $I^D_{\theta , \mu} (t)$ the same expression as that for $I_{\theta , \mu} (t)$ in (\ref{eq for I}) but where integration is performed over $D$, we see $$ | I^{\varpi \calO}_{\theta , \mu} (t) | \leq \int_{\varpi \calO} dx = 1/q$$ and thus this is bounded. Furthermore, $$ |I^{\Omega \smallsetminus t \calO}_{\theta , \mu} (t)| \leq \int_{\Omega \smallsetminus t \calO} \frac{dx}{|x| \cdot |t^{-1} x|} = 1/q $$ and thus this is bounded. Finally, for any specific $- \log_q |t| \leq m \leq 0$, we have $$ |I^{\varpi^{m} \calO^{\times}}_{\theta , \mu} (t)| \leq \int_{\varpi^{m} \calO^{\times}}  \frac{dx}{|x|} =1-1/q.$$ Therefore, denoting by $k \in \bbZ_{\ge 1}$ a number such that $\chi|_{1+\varpi^{2k} \calO} = 1$, it is enough to bound $I^{\varpi^k t \calO \smallsetminus \varpi^{-k} \calO}_{\theta , \mu} (t)$. We have $$I^{\varpi^k t \calO \smallsetminus \varpi^{-k} \calO}_{\theta , \mu} (t) = $$ $$ = \chi^{-1} (- \zeta) \theta \mtrx{0}{\zeta}{1}{0} \int_{\varpi^k t \calO \smallsetminus \varpi^{-k} \calO} \chi^{-2} (x) \theta \mtrx{1}{x^{-1}}{\zeta^{-1} x^{-1} }{1} \mu^{-1} \mtrx{1}{\zeta t^{-1} x}{t^{-1} x}{1} \cdot \frac{dx}{|x|}.$$ Let us denote by $K^{\prime} \subset K$ the subgroup consisting of $\mtrx{x}{\zeta y}{y}{x}$ for which $|y| \leq |x| \cdot |p|$. We have an isomorphism of topological groups $$ e : p \calO \to K^{\prime}$$ given by $$ e (y) := \exp \mtrx{0}{\zeta y}{y}{0}.$$ Let us denote by $\alpha : p \calO \to p \calO$ the map given by $\alpha (y) := e^{-1} \mtrx{1}{\zeta y}{y}{1}$. We have a power series expansion $\alpha (y) = y + \zeta y^3 / 3 + \zeta^2 y^5 / 5 + \ldots$. Let us now denote by $\widetilde{\theta}$ the unitary character of $p \calO$ satisfying $\theta|_{K'} \circ e = \widetilde{\theta}$ and by $\widetilde{\mu}$ the unitary character of $p \calO$ satisfying $\mu|_{K'} \circ e = \widetilde{\mu}$. Returning to our integral, we can take $k$ big enough so that $\varpi^k \in p \calO$. Substituting $x^{-1}$ in place of $x$ in the integral we have, we see that we need to show that $$ \int_{\varpi^{k+1} \calO \smallsetminus \varpi^{-k+1} t^{-1} \calO} \widetilde{\theta} (\alpha (\zeta^{-1} x)) \widetilde{\mu}^{-1} (\alpha (t^{-1} x^{-1})) \cdot \chi^2 (x) \cdot \frac{dx}{|x|}$$ is bounded independently of $\widetilde{\theta} , \widetilde{\mu} \in \widehat{p \calO}$ and $t \in F$ satisfying $|t| \ge 1$. This is implied by Proposition \ref{prop padic estimate}.
\end{proof}

\begin{remark}
	We see from the proof that we have a more precise version of Claim \ref{clm SL2 3}: There exists $C>0$ such that for $\chi$ satisfying (\ref{eq condition}), $\theta , \mu \in {\rm types} (V_{\chi})$ and $t \in \Omega^{\times}$ satisfying $|t| \ge 1$, we have $$ \left| \left\langle \mtrx{t}{0}{0}{1} h^{\chi}_{\theta} , h^{\chi}_{\mu} \right\rangle \right| \leq C \cdot |t|^{-1/2} \cdot (c(\chi) +d(\chi)+1).$$ Here $c(\chi)$ and $d(\chi)$ are as in the formulation of Proposition \ref{prop padic estimate} (when we identify $\chi$ with a character of $\Omega^{\times}$ via the isomorphism $\ba : \Omega^{\times} \to A$).
\end{remark}

Let us prove Proposition \ref{prop padic estimate}.

\begin{proof}[Proof (of Proposition \ref{prop padic estimate}).]
	We can assume that the $1$-th coefficients of $\alpha_1$ and $\alpha_2$ are equal to $1$. Let us fix a unitary character $\psi$ of $\Omega$ satisfying $\psi|_{\calO} = 1$ and $\psi|_{\varpi^{-1} \calO} \neq 1$. For $i \in \{ 1 , 2 \}$, let $a_i \in \Omega$ be such that $\psi (a_i x) = \psi_i (x)$ for all $x \in \varpi \calO$ ($a_i$ are defined up to addition of elements in $\varpi^{-1} \calO$, so in particular we can assume that $a_i \neq 0$). Given $0 < m < n$ we define $$ J^m  := \int_{\varpi^m \calO^{\times}} \psi \left( a_1 \alpha_1 (x) + a_2 \alpha_2 (\varpi^n x^{-1}) \right) \cdot \chi (x) \cdot \frac{dx}{|x|} = $$ $$ = \chi (\varpi)^{m} \int_{ \calO^{\times} } \psi \left( a_1 \alpha_1 (\varpi^m x) + a_2 \alpha_2 (\varpi^{n-m} x^{-1}) \right) \cdot \chi (x) \cdot dx,$$ so that the integral in question equals $\sum_{m_1 \leq m < m_2} J^m$. Let us abbreviate $b := (a_2 \varpi^{n-m}) / (a_1 \varpi^m)$. As we vary $m$, let us divide into cases.

\begin{enumerate}

	\item Assume that $ |\varpi^m | < q^{-c (\chi)}$ and that $|b| < q^{-c(\chi)}$. Set $$\beta (x) := \varpi^{-m} \alpha_1 (\varpi^m x) + b \cdot  \varpi^{m-n} \alpha_2 (\varpi^{n-m} x^{-1}).$$ Then $\beta$ gives a well-defined invertible analytic map $\calO^{\times} \to \calO^{\times}$, whose derivative is everywhere a unit. Moreover, if ${c(\chi)} > 0$, we have $\beta^{-1} (x_0 (1 + \varpi^{c(\chi)} \calO)) = x_0 (1 + \varpi^{c(\chi)} \calO)$ for any $x_0 \in \calO^{\times}$. Hence $$ J^m  = \chi (\varpi)^{m} \int_{ \calO^{\times}} \psi (a_1 \varpi^m \beta (x)) \cdot \chi (x) \cdot dx = $$ $$ = \chi (\varpi)^{m} \int_{ \calO^{\times} } \psi (a_1 \varpi^m x) \cdot \chi (x) \cdot dx.$$ Therefore:

\begin{enumerate}
	\item Suppose that $|a_1 \varpi^m| \leq 1$. Then $J^m = \chi (\varpi)^{m} (1-1/q)$ if $\chi$ is unramified and $J^m = 0$ if $\chi$ is ramified.
	\item Suppose that $|a_1 \varpi^m \varpi^{c(\chi)} | > q^{-1}$. Then $J^m = 0$ if $\chi$ is unramified. If $\chi$ is ramified, we write $$ J^m  = \chi (\varpi)^{m} \sum_{x_0 \in \calO^{\times} / (1+\varpi^{c(\chi)} \calO)} \psi (a_1 \varpi^m x_0) \chi (x_0) \int_{\varpi^{c(\chi)} \calO} \psi (a_1 \varpi^m x) \cdot dx$$ and each integral here is equal to $0$, so that also in that case we obtain $J^m = 0$.
	\item The case when neither of these two cases is satisfied corresponds to only finitely many values of $m$, whose number is linearly bounded in terms of $c(\chi)+1$, and so we can be content with the crude estimate $|J^m| \leq 1$ in this case.
\end{enumerate}

\item Assume that $|\varpi^{n-m}| < q^{-c(\chi)}$ and $|b^{-1}| < q^{-c ( \chi)}$. This case is dealt with analogously to the previous one; one denotes $$ \beta (x) := \omega^{m-n} \alpha_2 (\varpi^{n-m} x^{-1}) + b^{-1} \varpi^{-m} \alpha_1 (\varpi^m x)$$ and gets $$ J^m = \chi (\varpi)^m \int_{\calO^{\times}} \psi (a_2 \varpi^{n-m} x) \cdot \chi (x) \cdot dx.$$ And thus:

\begin{enumerate}
	\item Suppose that $|a_2 \varpi^{n-m}| \leq 1$. Then $J^m = \chi (\varpi)^{m} (1-1/q)$ if $\chi$ is unramified and $J^m = 0$ if $\chi$ is ramified.
	\item Suppose that $|a_2 \varpi^{n-m} \varpi^{c(\chi)} | > q^{-1}$. Then $J^m = 0$ if $\chi$ is unramified. If $\chi$ is ramified, we write $$ J^m  = \chi (\varpi)^{m} \sum_{x_0 \in \calO^{\times} / (1+\varpi^{c(\chi)} \calO)} \psi (a_2 \varpi^{n-m} x_0) \chi (x_0) \int_{\varpi^{c(\chi)} \calO} \psi (a_2 \varpi^{n-m} x) \cdot dx$$ and each integral here is equal to $0$, so that also in that case we obtain $J^m = 0$.
	\item The case when neither of these two cases is satisfied corresponds to only finitely many values of $m$, whose number is linearly bounded in terms of $c(\chi)+1$, and so we can be content with the crude estimate $|J^m| \leq 1$ in this case.
\end{enumerate}

	\item The case when neither of these two cases is satisfied corresponds to only finitely many values of $m$, whose number is linearly bounded in terms of $c(\chi)$, and so we can be content with the crude estimate $|J^m| \leq 1$ in this case.
	
\end{enumerate}

As our integral in question is equal to $\sum_{m_1 \leq m < m_2} J^m$, it is straight-forward that the findings above give the boundedness as desired.
\end{proof}

\appendix

\section{Auxiliary claims regarding polynomial growth of exponential integrals and sums}

\subsection{Some notation}

We denote $[n] := \{1 , 2, \ldots , n \}$. We denote $$\bbC_{\leq 0} := \{ z \in \bbC \ | \ {\rm Re} (z) \leq 0\}, \quad D := \{ z \in \bbC \ | \ |z| \leq 1\}.$$ Given $x = (x_1 , \ldots , x_n) \in \bbR_{\ge 0}^n$ and $m = (m_1 , \ldots , m_n) \in \bbZ_{\ge 0}^n$, we write $x^m := x_1^{m_1} \ldots x_n^{m_n}$. Given $\lambda \in \bbC_{\leq 0}^n$ we denote $$J_{\lambda} := \{ 1 \leq j \leq n \ | \ {\rm Re} (\lambda_j) = 0\}.$$ Given $(\lambda , m) \in \bbC_{\leq 0}^n \times \bbZ_{\ge 0}^n$, we denote $d(\lambda , m) := \sum_{j \in J_{\lambda}} (1+m_j)$. Given $J \subset [n]$ and some set $X$, let us denote by ${\rm res}_{J} : X^n \to X^{J}$ the natural restriction and by ${\rm ext}^{J} : X^{J} \to X^{n}$ the natural extension by zero.

\medskip

We fix a finite set $\calI \subset \bbR_{\ge 0}^n$ with the property that given $j \in [n]$ there exists $v \in \calI$ such that $\langle v , e_j \rangle \neq 0$, where $e_j$ the $j$-th standard basis vector. We denote $$ P_{< r} := \{ x \in \bbR_{\ge 0}^n \ | \ \langle v , x \rangle < r \ \forall v \in \calI \}.$$ Given $J \subset [n]$, we denote by $P_J \subset \bbR_{\ge 0}^J$ the convex pre-compact subset $\{ y \in \bbR_{\ge 0}^J \ | \ {\rm ext}^{J} (y) \in P_{<1}\}$.

\medskip

In \S\ref{ssec growth integral} we will also use the following notations. We consider a compact space $B$ equipped with a nowhere vanishing Radon measure $db$. Let us say that a function $\phi : B \times \bbR_{\ge 0}^n \to \bbC$ is \textbf{nice} if it is expressible as $$ B \times \bbR_{\ge 0}^n \xrightarrow{{\rm id}_B \times {\rm ei}} B \times D^n \xrightarrow{\phi^{\circ}} \bbC$$ where ${\rm ei}(x_1 , \ldots , x_n) := (e^{-x_1} , \ldots , e^{-x_n})$ and $\phi^{\circ}$ is continuous and holomorphic in the second variable (in the sense that when we fix the variable in $B$ it is the restriction of a holomorphic function on a neighbourhood of $D^n$). Given $J \subset [n]$ we denote by ${\rm res}_{J} \phi : B \times \bbR_{\ge 0}^J \to \bbC$ the function given by ${\rm res}_{J} \phi (b , y) := \phi^{\circ} (b , {\rm ext}^{J} ({\rm ei} (y)))$. We also write $\phi (b , +\infty)$ for $\phi^{\circ} (b , 0)$ etc.
\subsection{Growth - the case of summation over a lattice}

\begin{lemma}\label{lem appb lat 1}
	Let $\lambda := (\lambda_1 , \ldots , \lambda_n) \in \bbC_{\leq 0}^n$ and $m := (m_1 , \ldots , m_n) \in \bbZ_{\ge 0}$. Let $K \subset \bbR_{\ge 0}^n$ be a compact subset. Assume that ${\rm Re} (\lambda) = 0$ and $\lambda \notin (2\pi i) \bbZ^n$. We have $$ \sup_{Q \subset K} \left| \frac{1}{r^n} \sum_{x \in \frac{1}{r} \bbZ_{\ge 0}^n \cap Q} x^m e^{r \langle \lambda , x \rangle} \right| = O( r^{-1} )$$ as $r \to +\infty$, where $Q$ denote convex subsets.
\end{lemma}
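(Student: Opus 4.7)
The plan is to exploit the hypothesis $\lambda \notin (2\pi i)\bbZ^n$ to perform an Abel summation in one coordinate direction whose phase does not trivialise.

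First I would write $\lambda = i\mu$ with $\mu \in \bbR^n$ (using $\mathrm{Re}(\lambda) = 0$) and fix some index $j \in [n]$, without loss of generality $j=1$, such that $\mu_1 \notin 2\pi\bbZ$; such an index exists because $\lambda \notin (2\pi i)\bbZ^n$. Because $Q \subset K$ is convex, for each fixed $(k_2,\dots,k_n) \in \bbZ_{\ge 0}^{n-1}$ the slice $\{k_1 \in \bbZ_{\ge 0} : (k_1/r,\dots,k_n/r) \in Q\}$ is an interval $[a,b]$ of integers (possibly empty), where $a=a(k_2,\dots,k_n;Q)$ and $b=b(k_2,\dots,k_n;Q)$. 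Hence the full sum decomposes as
\[
\tfrac{1}{r^n} \sum_{(k_2,\dots,k_n)} \Bigl(\prod_{j\ge 2} (k_j/r)^{m_j} e^{i\mu_j k_j}\Bigr) \cdot \sum_{k_1=a}^{b} (k_1/r)^{m_1} e^{i\mu_1 k_1},
\]
and it suffices to bound the inner one-dimensional sum by a constant independent of $a,b,r$ and of the outer variables, and to observe that the outer sum ranges over at most $O(r^{n-1})$ tuples, each weighted by a bounded factor (since $K$ is compact).

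For the inner sum I would apply Abel's summation formula. Set $f(k) := (k/r)^{m_1}$ and $S(N) := \sum_{k=0}^{N} e^{i\mu_1 k}$. Since $\mu_1 \notin 2\pi\bbZ$, the geometric series gives $|S(N)| \le 2/|e^{i\mu_1}-1|$ uniformly in $N \ge 0$, so let $M := 2/|e^{i\mu_1}-1|$. Abel summation yields
\[
\sum_{k=a}^{b} f(k) e^{i\mu_1 k} = f(b) S(b) - f(a)S(a-1) - \sum_{k=a}^{b-1}\bigl(f(k+1)-f(k)\bigr) S(k).
\]
The boundary terms are bounded by $2 M \cdot \sup_{x \in K} x_1^{m_1}$. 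For the telescoped piece, the mean value theorem (applied coordinatewise on the scaled interval, or, if $m_1 = 0$, trivially) gives $|f(k+1) - f(k)| \le m_1 r^{-1} \sup_{x \in K} x_1^{m_1-1}$, while the number of summands is $b - a \le r \cdot \mathrm{diam}(K)$, so the sum is bounded by $M \cdot m_1 \cdot \mathrm{diam}(K) \cdot \sup_{x\in K} x_1^{m_1-1}$. All three contributions are therefore controlled by a constant $C = C(K,\mu,m)$ that is independent of $a,b,Q,r$.

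Combining the two estimates, the modulus of the whole expression is at most $r^{-n} \cdot C \cdot C_K \cdot \#\{(k_2,\dots,k_n) \in \bbZ_{\ge 0}^{n-1} : \exists k_1, (k/r) \in K\}$, and compactness of $K$ forces this counting number to be at most $C' r^{n-1}$. This gives the claimed $O(r^{-1})$ bound, uniformly in the convex subsets $Q \subset K$. The only slightly subtle step is the passage from convexity of $Q$ to the one-dimensional-interval structure of the slices, which is what makes Abel summation directly applicable without any covering or partitioning argument; everything else is routine.
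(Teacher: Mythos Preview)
Your argument is correct and follows the same structure as the paper's own proof: slice the sum along the coordinate direction where $\lambda_1 \notin 2\pi i\bbZ$, bound the resulting one-dimensional sum uniformly by a constant, and observe that the remaining $(n-1)$-dimensional sum contributes $O(r^{n-1})$ terms with bounded weights. The only difference is that the paper states the one-dimensional bound $\sup_R \bigl|\sum_{x_1 \in \frac{1}{r}\bbZ_{\ge 0} \cap R} x_1^{m_1} e^{r\lambda_1 x_1}\bigr| = O(1)$ as ``elementary'' without further justification, whereas you spell it out via Abel summation --- which is exactly the natural way to verify that step.
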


\begin{proof}
	Let us re-order the variables, assuming that $\lambda_1 \notin 2\pi i \bbZ$. Let us write $x = (x_1 , x')$ where $x' = (x_2 , \ldots , x_n)$ and analogously write $m'$ et cetera. Given a convex subset $Q \subset K$ and $x' \in \bbR_{\ge 0}^{n-1}$ let us denote by $Q^{x'} \subset \bbR_{\ge 0}$ the subset consisting of $x_1$ for which $(x_1 , x') \in Q$ (it is an interval). Let us enlarge $K$ for convenience, writing it in the form $K = K_1 \times K'$ where $K_1 \subset \bbR_{\ge 0}$ is a closed interval and $K' \subset \bbR_{\ge 0}^{n-1}$ is the product of closed intervals.
	
	\medskip
	
	We have $$ \sum_{x \in \frac{1}{r} \bbZ_{\ge 0}^n \cap Q} x^m e^{r\langle \lambda , x \rangle} = \sum_{ x' \in \frac{1}{r} \bbZ_{\ge 0}^{n-1} \cap K' } (x')^{m'} e^{r \langle \lambda' , x' \rangle }\left( \sum_{ x_1 \in \frac{1}{r} \bbZ_{\ge 0} \cap Q^{x'}} x_1^{m_1} e^{r \lambda_1 x_1} \right).$$ We have $Q^{x'} \subset K'$ and it is elementary to see that $$ \sup_{R \subset K'} \left| \sum_{x_1 \in \frac{1}{r} \bbZ_{\ge 0} \cap R} x_1^{m_1} e^{r \lambda_1 x_1}  \right| = O(1)$$ as $r \to +\infty$, where $R$ denote intervals. Therefore we obtain, for some $C>0$ (not depending on $Q$) and all $r \ge 1$: $$ \left| \frac{1}{r^n} \sum_{x \in \frac{1}{r} \bbZ_{\ge 0}^n \cap Q} x^m e^{r\langle \lambda , x \rangle} \right| \leq C \left(  \frac{1}{r^{n-1}} \sum_{ x' \in \frac{1}{r} \bbZ_{\ge 0}^{n-1} \cap K' } (x')^{m'} \right) r^{-1}.$$ Since the expression in brackets is clearly bounded independently of $r$, we are done.
\end{proof}

\begin{lemma}\label{lem appb lat 2} Let $(\lambda , m ) \in \bbC_{\leq 0}^n \times \bbZ_{\ge 0}^n$. Then the limit $$ \lim_{r \to +\infty} \frac{1}{r^{d(\lambda , m)}} \sum_{x \in \bbZ_{\ge 0}^n \cap P_{<r}} x^m e^{\langle \lambda , x \rangle} dx$$ exists, equal to $0$ if ${\rm res}_{J_{\lambda}} (\lambda) \notin 2\pi i \cdot \bbZ^{J_{\lambda}}$ and otherwise equal to $$\left( \int_{P_{J_{\lambda}}} y^{{\rm res}_{J_{\lambda}} (m)} dy \right) \left( \sum_{z \in \bbZ_{\ge 0}^{J_{\lambda}^c}} z^{{\rm res}_{J_{\lambda}^c} (m)} e^{\langle {\rm res}_{J_{\lambda}^c} (\lambda) , z \rangle } \right)$$ (the sum converging absolutely).
\end{lemma}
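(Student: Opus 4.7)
The plan is to decompose every $x \in \bbZ_{\ge 0}^n$ as $x = {\rm ext}^{J_\lambda}(y) + {\rm ext}^{J_\lambda^c}(z)$ with $y \in \bbZ_{\ge 0}^{J_\lambda}$ and $z \in \bbZ_{\ge 0}^{J_\lambda^c}$. For fixed $z$ the condition $x \in P_{<r}$ becomes $y \in P_{<r}^z$, where $P_{<r}^z := \{ y \in \bbR_{\ge 0}^{J_\lambda} \ | \ {\rm ext}^{J_\lambda}(y) + {\rm ext}^{J_\lambda^c}(z) \in P_{<r} \}$ is a convex subset of $r \cdot P_{J_\lambda}$. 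Factoring out $y$ and $z$, the sum rewrites as
$$ \sum_{z \in \bbZ_{\ge 0}^{J_\lambda^c}} z^{{\rm res}_{J_\lambda^c}(m)} e^{\langle {\rm res}_{J_\lambda^c}(\lambda), z\rangle} \cdot S_{r,z}, \qquad S_{r,z} := \sum_{y \in \bbZ_{\ge 0}^{J_\lambda} \cap P_{<r}^z} y^{{\rm res}_{J_\lambda}(m)} e^{\langle {\rm res}_{J_\lambda}(\lambda), y\rangle}. $$
The $z$-sum converges absolutely because ${\rm Re}(\lambda_j) < 0$ for $j \in J_\lambda^c$; this will supply the dominated-convergence envelope below.

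In the case ${\rm res}_{J_\lambda}(\lambda) \notin 2\pi i \bbZ^{J_\lambda}$, I would substitute $y = r u$ to obtain
$$ S_{r,z} = r^{\sum_{j \in J_\lambda} m_j} \sum_{u \in \frac{1}{r} \bbZ_{\ge 0}^{J_\lambda} \cap Q_{r,z}} u^{{\rm res}_{J_\lambda}(m)} e^{r \langle {\rm res}_{J_\lambda}(\lambda), u\rangle}, $$
where $Q_{r,z} := \tfrac{1}{r} P_{<r}^z$ is a convex subset of the fixed compact set $P_{J_\lambda}$. Lemma \ref{lem appb lat 1} (applied with $K = P_{J_\lambda}$) then yields $S_{r,z} = O(r^{d(\lambda,m) - 1})$ \emph{uniformly in $z$}; after dividing by $r^{d(\lambda,m)}$ and summing against the absolutely convergent $z$-series, the limit is $0$, as claimed.

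In the case ${\rm res}_{J_\lambda}(\lambda) = 2\pi i k$ with $k \in \bbZ^{J_\lambda}$, the oscillation disappears on the lattice, so $S_{r,z} = \sum_{y \in \bbZ_{\ge 0}^{J_\lambda} \cap P_{<r}^z} y^{{\rm res}_{J_\lambda}(m)}$. This is a Riemann-type sum over a convex body of dimension $|J_\lambda|$, and after the rescaling $y = ru$ a standard approximation argument gives
$$ \frac{S_{r,z}}{r^{d(\lambda,m)}} = \int_{Q_{r,z}} u^{{\rm res}_{J_\lambda}(m)} du + O(r^{-1}), $$
uniformly in $z$. For each fixed $z$, the convex sets $Q_{r,z}$ exhaust $P_{J_\lambda}$ from below as $r \to +\infty$, so the integral converges to $\int_{P_{J_\lambda}} u^{{\rm res}_{J_\lambda}(m)} du$. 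Uniform boundedness of $S_{r,z}/r^{d(\lambda,m)}$ together with the absolute convergence of the $z$-series allows one to interchange limit and summation, yielding exactly the asserted product formula.

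The main obstacle is the $y$--$z$ coupling in $P_{<r}^z$: the effective region for $y$ shrinks as $|z|$ grows, which a-priori threatens to spoil a term-by-term limit in $z$. Both cases are resolved by the same mechanism: the uniformity of the estimate in Lemma \ref{lem appb lat 1} over convex subsets $Q \subset P_{J_\lambda}$ delivers bounds on $S_{r,z}/r^{d(\lambda,m)}$ that are independent of $z$, and the exponential decay built into ${\rm res}_{J_\lambda^c}(\lambda)$ furnishes the dominating summable majorant needed to pass the limit under the sum.
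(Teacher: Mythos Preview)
Your proposal is correct and follows essentially the same approach as the paper: decompose the lattice sum along $J_{\lambda}$ and $J_{\lambda}^c$, rescale the $J_{\lambda}$-part by $1/r$ to land in convex subsets of the fixed compact $P_{J_{\lambda}}$, invoke Lemma~\ref{lem appb lat 1} for the uniform-in-$z$ bound in the oscillatory case, and use Riemann-sum convergence together with dominated convergence (against the absolutely convergent $z$-series) in the non-oscillatory case. The only cosmetic difference is that the paper builds the rescaling directly into its definition of $P_{(<r)}^{x''}$, whereas you perform the substitution $y = r u$ explicitly.
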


\begin{proof}
	Let us abbreviate $J := J_{\lambda}$. Let us denote $\lambda' := {\rm res}_J (\lambda)$ and $\lambda'' := {\rm res}_{J^c} (\lambda)$, and similarly for $m$. Given $x'' \in \bbZ_{\ge 0}^{J_c}$ let us denote by $P_{(<r)}^{x''} \subset \bbR_{\ge 0}^{J}$ the subset consisting of $y'$ for which ${\rm ext}^{J} (r y') + {\rm ext}^{J^c} (x'') \in P_{<r}$.
	
	\medskip
	
	We have $$ \sum_{x \in \bbZ_{\ge 0}^n \cap P_{<r}} x^m e^{\langle \lambda , x \rangle} = r^{d(\lambda , m) - |J|} \sum_{x'' \in \bbZ_{\ge 0}^{J^c}} (x'')^{m''} e^{\langle \lambda'' , x'' \rangle} \sum_{y' \in \frac{1}{r} \bbZ_{\ge 0}^{J} \cap P^{x''}_{(<r)}} (y')^{m'} e^{r \langle \lambda' , y' \rangle} := \triangle.$$
	
	\medskip
	
	Let us assume first that $\lambda' \notin 2 \pi i \cdot \bbZ^{J}$. Then by Lemma \ref{lem appb lat 1} there exists $C>0$ such that for all convex subsets $Q \subset P_J$ and all $r \ge 1$ we have $$ \left| \frac{1}{r^{|J|}} \sum_{y' \in \frac{1}{r} \bbZ_{\ge 0}^{J} \cap Q} (y')^{m'} e^{r \langle \lambda' , y' \rangle} \right| \leq C \cdot r^{-1}.$$ Therefore $$ | \triangle | \leq C r^{d(\lambda , m) - 1}  \sum_{x'' \in \bbZ_{\ge 0}^{J^c}} (x'')^{m''} e^{\langle {\rm Re} (\lambda'') , x'' \rangle}, $$ giving the desired.
	
	\medskip
	
	Now we assume $\lambda' \in 2 \pi i \cdot \bbZ^J$. It is not hard to see that $$ \lim_{r \to +\infty} \frac{1}{r^{|J|}} \sum_{y' \in \frac{1}{r} \bbZ_{\ge 0}^{J} \cap P^{x''}_{(<r)}} (y')^{m'} = \int_{P_J} (y')^{m'} dy'.$$ Hence we have (by dominated convergence) $$ \lim_{r \to +\infty} \frac{1}{r^{d(\lambda,m)}} \triangle =  \sum_{x'' \in \bbZ_{\ge 0}^{J^c}} (x'')^{m''} e^{\langle \lambda'' , x'' \rangle} \int_{P_J} (y')^{m'} dy' .$$
\end{proof}

\begin{claim}\label{clm appb lat}
	Let $p \ge 1$, let $\{ ( \lambda^{(\ell)} , m^{(\ell)}) \}_{\ell \in [p]} \subset \bbC_{\leq 0}^{n} \times \bbZ_{\ge 0}^n$ be a collection of pairwise different couples and let $\{ c^{(\ell)} \}_{\ell \in [p]} \subset \bbC \smallsetminus \{ 0 \}$ be a collection of non-zero scalars. Denote $d := \max_{\ell \in [p]} d(2{\rm Re} (\lambda^{(\ell)}) , 2 m^{(\ell)})$.
	The limit $$ \lim_{r \to +\infty} \frac{1}{r^d} \sum_{x \in \bbZ_{\ge 0}^n \cap P_{<r}} \left| \sum_{\ell \in [p]} c^{(\ell)} x^{m^{(\ell)}} e^{\langle \lambda^{(\ell)} , x \rangle } \right|^2 $$ exists and is strictly positive.
\end{claim}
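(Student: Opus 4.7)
The approach is to expand the integrand as
$$\left| \sum_{\ell \in [p]} c^{(\ell)} x^{m^{(\ell)}} e^{\langle \lambda^{(\ell)}, x\rangle} \right|^2 = \sum_{\ell_1,\ell_2 \in [p]} c^{(\ell_1)}\overline{c^{(\ell_2)}}\, x^{m^{(\ell_1)}+m^{(\ell_2)}}\, e^{\langle \lambda^{(\ell_1)} + \overline{\lambda^{(\ell_2)}}, x\rangle}$$
and apply Lemma \ref{lem appb lat 2} termwise. Each pair contributes an $r^{d_{\ell_1,\ell_2}}$-growth term with explicit limiting constant, where $d_{\ell_1,\ell_2} := d(\lambda^{(\ell_1)} + \overline{\lambda^{(\ell_2)}},\, m^{(\ell_1)} + m^{(\ell_2)})$. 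The problem reduces to identifying which pairs contribute to the leading order after dividing by $r^d$, and then recombining them cleanly.

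The key combinatorial step is the bound $d_{\ell_1, \ell_2} \leq d$ together with a sharp equality characterization. Using $J_{\lambda^{(\ell_1)} + \overline{\lambda^{(\ell_2)}}} = J_{\lambda^{(\ell_1)}} \cap J_{\lambda^{(\ell_2)}}$ (which holds because the real parts are non-positive) and the elementary inequality $1 + a + b \leq \tfrac{1}{2}(1+2a) + \tfrac{1}{2}(1+2b)$ summed over the intersection, one gets the desired bound. Equality forces (i) both $\ell_1$ and $\ell_2$ achieve the maximum $d$, (ii) $J_{\lambda^{(\ell_1)}} = J_{\lambda^{(\ell_2)}} =: J$, and (iii) ${\rm res}_J(m^{(\ell_1)}) = {\rm res}_J(m^{(\ell_2)})$. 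Pairs with strict inequality vanish in the normalized limit; among equality pairs, Lemma \ref{lem appb lat 2} further kills those with ${\rm res}_J(\lambda^{(\ell_1)}) - {\rm res}_J(\lambda^{(\ell_2)}) \notin 2\pi i\, \bbZ^J$. Letting $L \subset [p]$ denote the indices attaining $d$, I would partition $L$ into equivalence classes by $(ii)$, $(iii)$, and the congruence ${\rm res}_J(\lambda) \bmod 2\pi i\, \bbZ^J$, so that the surviving pairs are exactly $\bigcup_{[\ell_0]} [\ell_0] \times [\ell_0]$.

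Collecting the surviving contributions and interchanging summation (the inner sum over $\bbZ_{\ge 0}^{J^c}$ converges absolutely since all relevant exponents have strictly negative real part on $J^c$), the limit rewrites as
$$\sum_{[\ell_0]} \left( \int_{P_J} y^{2 m_J}\, dy \right) \sum_{z \in \bbZ_{\ge 0}^{J^c}} \bigl| g_{[\ell_0]}(z) \bigr|^2, \quad g_{[\ell_0]}(z) := \sum_{\ell \in [\ell_0]} c^{(\ell)}\, z^{{\rm res}_{J^c}(m^{(\ell)})}\, e^{\langle {\rm res}_{J^c}(\lambda^{(\ell)}), z\rangle},$$
which is manifestly non-negative and whose existence is evident. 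For strict positivity I would fix any $\ell^* \in L$ and examine $[\ell^*]$: within this class, any two indices share $J$, ${\rm res}_J(m)$, and ${\rm res}_J(\lambda) \bmod 2\pi i\, \bbZ^J$, so pairwise distinctness of the original couples (interpreted modulo the $2\pi i$-periodicity which is invisible on $\bbZ^n$, as is tacit in the Casselman-theoretic settings to which this claim is applied) forces distinct ${\rm res}_{J^c}$-data. Standard linear independence of the functions $z \mapsto z^{m'} e^{\langle \lambda', z\rangle}$ on $\bbZ_{\ge 0}^{J^c}$ for pairwise distinct $(\lambda', m')$ with ${\rm Re}(\lambda') < 0$ componentwise then forces $g_{[\ell^*]} \not\equiv 0$, giving a strictly positive contribution. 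The main obstacle is the bookkeeping that establishes the equality characterization for $d_{\ell_1,\ell_2} = d$ and packages the surviving pairs into the sum-of-squares form; once that is in place, the final positivity follows from the standard linear independence of exponential polynomials.
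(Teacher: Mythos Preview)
Your overall strategy matches the paper's: expand the square, apply Lemma~\ref{lem appb lat 2} termwise, isolate the surviving pairs, and recombine into a sum of squares. However, there is a genuine error in your equality characterization.

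You claim that $d_{\ell_1,\ell_2}=d$ forces (iii) ${\rm res}_J(m^{(\ell_1)})={\rm res}_J(m^{(\ell_2)})$, citing the ``inequality'' $1+a+b\le \tfrac12(1+2a)+\tfrac12(1+2b)$. But this is an \emph{identity}, not a strict inequality, so it cannot force $a=b$. Concretely, take $n=2$, $\lambda^{(1)}=\lambda^{(2)}=0$, $m^{(1)}=(1,0)$, $m^{(2)}=(0,1)$: both achieve $d=4$, share $J=\{1,2\}$, and $d_{1,2}=4=d$, yet ${\rm res}_J(m^{(1)})\neq{\rm res}_J(m^{(2)})$. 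Thus cross terms between different ${\rm res}_J(m)$ values \emph{do} survive at leading order, and your displayed formula
\[
\sum_{[\ell_0]}\Bigl(\int_{P_J} y^{2m_J}\,dy\Bigr)\sum_{z}\bigl|g_{[\ell_0]}(z)\bigr|^2
\]
is not the limit: it omits these cross terms (in the example above it misses $2\int_{P_J} y_1 y_2\,dy$).

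The fix is exactly what the paper does: partition only by (i), (ii), and the congruence on ${\rm res}_J(\lambda)$, and keep the $y$-monomials \emph{inside} the square, obtaining
\[
\sum_{z\in\bbZ_{\ge 0}^{J^c}}\int_{P_J}\Bigl|\sum_{\ell} c^{(\ell)} y^{{\rm res}_J(m^{(\ell)})} z^{{\rm res}_{J^c}(m^{(\ell)})} e^{\langle{\rm res}_{J^c}(\lambda^{(\ell)}),z\rangle}\Bigr|^2\,dy.
\]
This is still manifestly non-negative. For strict positivity one then uses linear independence of the monomials $y\mapsto y^{{\rm res}_J(m^{(\ell)})}$ on the open set $P_J$ to reduce to a single common ${\rm res}_J(m)$, after which your final step (linear independence of exponential polynomials in $z$, with the implicit $2\pi i\,\bbZ^{J^c}$ caveat you noted) goes through.
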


\begin{proof}
	Let us break the integrand into a sum following $$\left| \sum_{\ell \in [p]} A_{\ell} \right|^2 = \sum_{\ell_1 , \ell_2 \in [p]} A_{\ell_1} \overline{A_{\ell_2}}.$$ Using Lemma \ref{lem appb lat 2} we see the that resulting limit breaks down as a sum, over $(\ell_1 ,\ell_2) \in [p]^2$, of limits which exist, so the only thing to check is that the resulting limit is non-zero. It is easily seen that the limit at the $(\ell_1 , \ell_2)$ place is zero unless $d(\lambda^{(\ell_1)} , m^{(\ell_1)} ) = d$, $d(\lambda^{(\ell_2)} , m^{(\ell_2)} ) = d$, $J_{\lambda^{(\ell_1)}} = J_{\lambda^{(\ell_2)}}$ and ${\rm res}_{J^{(\ell_1)}} (\lambda^{(\ell_2)}) - {\rm res}_{J^{(\ell_1)}} (\lambda^{(\ell_1)}) \in 2\pi i \cdot \bbZ^J$. We thus can reduce to the case when, for a given $J \subset [n]$, we have $J_{\lambda^{(\ell)}} = J$ for all $\ell \in [p]$, we have $d(\lambda^{(\ell)} , m^{(\ell)}) = d$ for all $\ell \in [p]$, and we have ${\rm res}_{J} (\lambda^{(\ell_2)}) - {\rm res}_{J} (\lambda^{(\ell_1)}) \in 2\pi i \cdot \bbZ^J$ for all $\ell_1 , \ell_2 \in [p]$. We then obtain, using Lemma \ref{lem appb lat 2}, that our overall limit equals $$ \sum_{z \in \bbZ_{\ge 0}^{J^c}} \int_{P_J} \left| \sum_{\ell \in [p]} c^{(\ell)} y^{{\rm res}_J (m^{(\ell)})} z^{{\rm res}_{J^c} (m^{(\ell)})} e^{\langle {\rm res}_{J^c} (\lambda^{(\ell)}) , z \rangle } \right|^2 dy.$$ It is therefore enough to check that $$   \sum_{\ell \in [p]} c^{(\ell)} y^{{\rm res}_J (m^{(\ell)})} z^{{\rm res}_{J^c} (m^{(\ell)})} e^{\langle {\rm res}_{J^c} (\lambda^{(\ell)}) , z \rangle }, $$ a function in $(z,y ) \in  \bbZ_{\ge 0}^{J^c} \times P_J$, is not identically zero. By the local linear independence of powers of $y$, we can further assume that ${\rm res}_J (m^{(\ell)})$ is independent of $\ell \in [p]$, and want to check that $$\sum_{\ell \in [p]} c^{(\ell)} z^{{\rm res}_{J^c} (m^{(\ell)})} e^{\langle {\rm res}_{J^c} (\lambda^{(\ell)}) , z \rangle }, $$ a function in $z \in  \bbZ_{\ge 0}^{J^c}$, is not identically zero. Notice that, by our assumptions, the elements in the collection $ \{ ( {\rm res}_{J^c} (\lambda^{(\ell)})  , {\rm res}_{J^c} (m^{(\ell)})  ) \}_{\ell \in [p]}$ are pairwise different. Thus the non-vanishing of our sum is clear (by linear algebra of generalized eigenvectors of shift operators on $\bbZ^{J^c}$).
\end{proof}

\subsection{Growth - the case of an integral}\label{ssec growth integral}

\begin{lemma}\label{lem appb 1}
	Let $\lambda := (\lambda_1 , \ldots , \lambda_n) \in \bbC_{\leq 0}^n$ and $m := (m_1 , \ldots , m_n) \in \bbZ_{\ge 0}$. Let $K \subset \bbR_{\ge 0}^n$ be a compact subset. Assume that ${\rm Re} (\lambda) = 0$ and $\lambda \neq 0$. We have $$ \sup_{Q \subset K} \left| \int_Q x^m e^{r \langle \lambda , x \rangle} dx \right| = O( r^{-1} )$$ as $r \to +\infty$, where $Q$ denote convex subsets.
\end{lemma}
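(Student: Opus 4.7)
The plan is to reduce to a one-dimensional oscillatory integral by Fubini, exploiting the convexity of $Q$ to ensure the slices are intervals, and then obtain the decay via a single integration by parts.

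First, since ${\rm Re}(\lambda) = 0$ and $\lambda \neq 0$, after reordering coordinates I may assume $\lambda_1 = i\alpha$ with $\alpha \in \bbR \smallsetminus \{0\}$. Write $x = (x_1, x')$ with $x' = (x_2, \ldots, x_n)$, and correspondingly $\lambda = (\lambda_1, \lambda')$, $m = (m_1, m')$. Enlarge $K$ if necessary so that $K = K_1 \times K'$ where $K_1 \subset \bbR_{\ge 0}$ is a compact interval and $K' \subset \bbR_{\ge 0}^{n-1}$ is a compact box. For any convex $Q \subset K$ and any $x' \in K'$, the slice $Q^{x'} := \{ x_1 \in \bbR_{\ge 0} \ | \ (x_1, x') \in Q \}$ is an interval (possibly empty), contained in $K_1$.

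By Fubini, and since $|e^{r \langle \lambda', x' \rangle}| = 1$ (as ${\rm Re}(\lambda') = 0$),
\[
\left| \int_Q x^m e^{r \langle \lambda , x \rangle} dx \right| \leq \int_{K'} (x')^{m'} \left| \int_{Q^{x'}} x_1^{m_1} e^{i r \alpha x_1} dx_1 \right| dx'.
\]
The key step is therefore the uniform bound
\[
\sup_{[a,b] \subset K_1} \left| \int_a^b x_1^{m_1} e^{i r \alpha x_1} dx_1 \right| = O(r^{-1}) \qquad (r \to +\infty).
\]
This follows by a single integration by parts: antidifferentiating $e^{ir\alpha x_1}$ to $\frac{1}{ir\alpha} e^{ir\alpha x_1}$ gives
\[
\int_a^b x_1^{m_1} e^{i r \alpha x_1} dx_1 = \frac{1}{ir\alpha}\Bigl[ x_1^{m_1} e^{ir\alpha x_1}\Bigr]_a^b - \frac{m_1}{ir\alpha} \int_a^b x_1^{m_1 - 1} e^{ir\alpha x_1} dx_1,
\]
and both terms are uniformly $O(r^{-1})$ in $a, b \in K_1$ since $x_1^{m_1}$ and $x_1^{m_1 - 1}$ are bounded on the compact set $K_1$ (and the length of $[a,b]$ is bounded by $|K_1|$).

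Combining these, we obtain $\left| \int_Q x^m e^{r \langle \lambda , x \rangle} dx \right| \leq C r^{-1} \int_{K'} (x')^{m'} dx'$, where $C$ depends only on $\alpha$, $m_1$, and $K_1$, and the remaining integral is finite since $K'$ is compact. This is uniform over convex $Q \subset K$, yielding the claim. There is no serious obstacle: the only point requiring attention is using convexity of $Q$ to guarantee that each slice $Q^{x'}$ is a single interval, so that the one-dimensional estimate applies directly without further decomposition.
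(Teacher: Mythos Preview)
Your proof is correct and follows essentially the same approach as the paper: reorder so that $\lambda_1 \neq 0$, enlarge $K$ to a box $K_1 \times K'$, use convexity so that each slice $Q^{x'}$ is an interval, and reduce via Fubini to a uniform one-dimensional oscillatory estimate. The paper simply declares this one-dimensional estimate ``elementary'' without spelling it out, whereas you supply the integration-by-parts argument explicitly; otherwise the two arguments are identical.
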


\begin{proof}
	Let us re-order the variables, assuming that $\lambda_1 \neq 0$. Let us write $x = (x_1 , x')$ where $x' = (x_2 , \ldots , x_n)$ and analogously write $m'$ etcetera. Given a convex subset $Q \subset K$ and $x' \in \bbR_{\ge 0}^{n-1}$ let us denote by $Q^{x'} \subset \bbR_{\ge 0}$ the subset consisting of $x_1$ for which $(x_1 , x') \in Q$ (it is an interval). Let us enlarge $K$ for convenience, writing it in the form $K = K_1 \times K'$ where $K_1 \subset \bbR_{\ge 0}$ is a closed interval and $K' \subset \bbR_{\ge 0}^{n-1}$ is the product of closed intervals.
	
	\medskip
	
	Using Fubini's theorem $$ \int_Q x^m e^{r\langle \lambda , x \rangle} dx = \int_{ K' } (x')^{m'} e^{r \langle \lambda' , x' \rangle }\left( \int_{Q^{x'}} x_1^{m_1} e^{r \lambda_1 x_1} dx_1 \right) dx'.$$ We have $Q^{x'} \subset K'$ and it is elementary to see that $$ \sup_{R \subset K'} \left| \int_{R} x_1^{m_1} e^{r \lambda_1 x_1} dx_1 \right| = O(r^{-1})$$ as $r \to +\infty$, where $R$ denote intervals. Therefore we obtain, for some $C>0$ and all $r \ge 1$: $$ \left|  \int_Q x^m e^{r\langle \lambda , x \rangle} dx  \right| \leq C \left( \int_{K'} (x')^{m'} dx' \right) r^{-1},$$ as desired.
\end{proof}

\begin{lemma}\label{lem appb 2} Let $(\lambda , m ) \in \bbC_{\leq 0}^n \times \bbZ_{\ge 0}^n$ and let $\phi : B \times \bbR_{\ge 0}^n \to \bbC$ be a nice function. Then the limit $$ \lim_{r \to +\infty} \frac{1}{r^{d(\lambda , m)}} \int_B \int_{P_{<r}} x^m e^{\langle \lambda , x \rangle}  \phi (b,x)dxdb$$ exists, equal to $0$ if ${\rm res}_{J_{\lambda}} \lambda \neq 0$ and otherwise equal to $$\left( \int_{P_{J_{\lambda}}} y^{{\rm res}_{J_{\lambda}} (m)} dy \right) \left( \int_B \int_{\bbR_{\ge 0}^{J_{\lambda}^c}} z^{{\rm res}_{J_{\lambda}^c} (m)} e^{\langle {\rm res}_{J_{\lambda}^c} (\lambda) , z\rangle} {\rm res}_{J_{\lambda}^c} \phi (b , z) dz db \right)$$ (the double integral converging absolutely).
\end{lemma}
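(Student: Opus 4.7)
The plan is to mimic the structure of the proof of Lemma \ref{lem appb lat 2}, substituting Lemma \ref{lem appb 1} for Lemma \ref{lem appb lat 1} and handling the extra factor $\phi$ using the holomorphic structure built into the ``nice'' assumption. First I would split $x = (y, z)$ with $y \in \bbR_{\ge 0}^{J_{\lambda}}$ and $z \in \bbR_{\ge 0}^{J_{\lambda}^c}$, and substitute $y = r\tilde{y}$. The Jacobian contributes $r^{|J_{\lambda}|}$ and the monomial contributes $r^{\sum_{j \in J_{\lambda}} m_j}$, so the combined rescaling factor is precisely $r^{d(\lambda, m)}$. Abbreviating $\lambda' := {\rm res}_{J_{\lambda}}(\lambda)$, $\lambda'' := {\rm res}_{J_{\lambda}^c}(\lambda)$ and similarly for $m$, the quantity whose limit we want becomes
$$ T_r := \int_B \int_{Q_r} \tilde{y}^{m'} z^{m''} e^{r \langle \lambda', \tilde{y}\rangle} e^{\langle \lambda'', z\rangle} \phi(b, (r\tilde{y}, z))\, d\tilde{y}\, dz\, db, $$
where the rescaled domain $Q_r$ has, for each fixed $z \in \bbR_{\ge 0}^{J_{\lambda}^c}$, a convex $\tilde{y}$-slice $Q_r^z \subset P_{J_{\lambda}}$ that increases to $P_{J_{\lambda}}$ as $r \to +\infty$.

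In the case $\lambda' = 0$ I would argue by dominated convergence. Continuity of $\phi^{\circ}$ on the compact $B \times D^n$ bounds $|\phi|$ by a constant, so the integrand is dominated by $C \cdot 1_{P_{J_{\lambda}}}(\tilde{y}) \tilde{y}^{m'} z^{m''} e^{\langle {\rm Re}(\lambda''), z\rangle}$, which is integrable over $B \times P_{J_{\lambda}} \times \bbR_{\ge 0}^{J_{\lambda}^c}$ because ${\rm Re}(\lambda''_j) < 0$ for every $j \in J_{\lambda}^c$. Pointwise on the interior of $P_{J_{\lambda}}$ the indicator $1_{Q_r}$ tends to that of $P_{J_{\lambda}} \times \bbR_{\ge 0}^{J_{\lambda}^c}$, and since $e^{-r \tilde{y}_j} \to 0$ for every $j \in J_{\lambda}$, continuity of $\phi^{\circ}$ yields $\phi(b, (r\tilde{y}, z)) \to {\rm res}_{J_{\lambda}^c}\phi(b, z)$. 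A final application of Fubini produces the asserted product formula, with absolute convergence of the double integral inherited from the same dominating bound.

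In the case $\lambda' \neq 0$ I would decompose $\phi(b, (r\tilde{y}, z)) = {\rm res}_{J_{\lambda}^c}\phi(b, z) + R_r(b, \tilde{y}, z)$. For the main term, $\int_{Q_r^z} \tilde{y}^{m'} e^{r \langle \lambda', \tilde{y}\rangle}\, d\tilde{y} = O(r^{-1})$ uniformly in $z$ by Lemma \ref{lem appb 1} applied to the convex slice $Q_r^z$. The remainder requires more care: the holomorphy of $\phi^{\circ}$ in the first $|J_{\lambda}|$ coordinates, combined with continuity on the compact $B \times D^n$, gives a bound $|R_r(b, \tilde{y}, z)| \leq C \sum_{j \in J_{\lambda}} e^{-r\tilde{y}_j}$, and for each $j \in J_{\lambda}$ direct integration gives $\int_{P_{J_{\lambda}}} \tilde{y}^{m'} e^{-r\tilde{y}_j}\, d\tilde{y} = O(r^{-1})$. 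Multiplying by the integrable weight $|z^{m''} e^{\langle \lambda'', z\rangle}|$ and integrating over $z$ and over the compact $B$ preserves the $O(r^{-1})$ bound, so $T_r \to 0$ as desired. The chief technical obstacle is precisely this remainder step: pure continuity of $\phi$ would not suffice, since one needs the uniform-in-$(\tilde{y}, z)$ decay estimate provided by holomorphy in the $J_{\lambda}$-coordinates to offset the potentially slow decay of $\phi(b, (r\tilde{y}, z)) - {\rm res}_{J_{\lambda}^c}\phi(b, z)$ near the boundary $\{\tilde{y}_j = 0\}$.
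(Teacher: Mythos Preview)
Your argument is correct and close in spirit to the paper's, but the two proofs organize the use of the ``nice'' hypothesis differently. The paper first performs an inductive reduction: writing $\phi(b,x) = e^{-x_1}\phi_0(b,x) + \phi_1(b,x)$ with $\phi_1$ independent of $x_1$, the $e^{-x_1}\phi_0$ term is absorbed into a problem with strictly smaller $J_{\lambda}$ (hence smaller $d(\lambda,m)$), and iterating over all $j \in J_{\lambda}$ reduces to the case where $\phi$ depends only on $(b,x'')$; after that, both the $\lambda'=0$ and $\lambda'\neq 0$ cases are handled with $\phi$ constant along the $x'$-directions, exactly as in Lemma \ref{lem appb lat 2}. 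You instead keep $\phi$ as is and split $\phi(b,(r\tilde y,z)) = {\rm res}_{J_{\lambda}^c}\phi(b,z) + R_r$, treating the first piece by Lemma \ref{lem appb 1} and the remainder by the Schwarz-type Lipschitz bound $|R_r| \leq C\sum_{j\in J_{\lambda}} e^{-r\tilde y_j}$ coming from holomorphy in the $J_{\lambda}$-coordinates. Your remainder is precisely the sum of the terms the paper peels off inductively, so the two arguments are really the same decomposition packaged differently: the paper's induction is a bit more structural, while your direct estimate is slightly more self-contained and avoids setting up the induction.
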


\begin{proof}

Let us re-order the variables, assuming that $J := J_{\lambda} = [k]$. Let us write $x = (x',x'')$ where $x'$ consists of the first $k$ components and $x''$ consists of the last $k$ components. Let us write analogously $m' , \lambda'$ etc.

	\medskip
	
	First, let us notice that if $k \neq 0$, we can write $$ \phi (b , x ) = e^{-x_1} \phi_0 (b , x) + \phi_1 (b , x)$$ where $\phi_0 , \phi_1 : B \times \bbR_{\ge 0}^n \to \bbC$ are nice functions and $\phi_1$ does not depend on $x_1$. Dealing with $ e^{-x_1} \phi_0 (b,x)$ instead of $\phi (b,x)$ makes us consider $\lambda$ with smaller set $J_{\lambda}$ and thus $(\lambda , m)$ with a smaller $d(\lambda , m)$ and from this, reasoning inductively, we see that we can assume that $\phi$ only depends on $(b,x'')$. Let us write $\phi'' := {\rm res}_{J^c} \phi$.
	
	\medskip
	
	Let us perform a change of variables $y' := \frac{1}{r} x'$. Let $P_{(<r)} \subset \bbR_{\ge 0}^n$ denote the transform of $P_{<r}$ under this changes of variables (i.e. $(x',x'') \in P_{<r}$ if and only if $(y',x'') \in P_{(<r)}$). We obtain $$ \int_B \int_{P_{<r}} x^m e^{\langle \lambda , x \rangle} \phi (b , x) dx db = $$ $$ = r^{d}  \int_B \int_{P_{(<r)}} (y')^{m'} e^{r \langle \lambda' , y' \rangle } (x'')^{m''} e^{\langle \lambda'' , x'' \rangle} \phi'' (b , x'') dy' dx'' db =: \triangle.$$
	
	Given $x'' \in \bbR_{\ge 0}^{J^c}$, let us denote by $P_{(<r)}^{x''} \subset \bbR_{\ge 0}^{J}$ the set consisting of $y'$ for which $(y',x'') \in P_{(<r)}$. Notice that $P_{(<r_1)}^{x''} \subset P_{(<r_2)}^{x''}$ for $r_1 < r_2$ and $\cup_{r} P_{(<r)}^{x''} = P_{J}$. Using Fubini's theorem
	
	$$ \triangle = r^{d} \int_B \int_{ \bbR_{\ge 0}^{J^c}} (x'')^{m''} e^{\langle \lambda'' , x'' \rangle } \phi'' (b , x'') \left( \int_{ P_{(<r)}^{x''}} (y')^{m'} e^{r \langle \lambda' , y' \rangle }  dy'  \right) dx'' db.$$
	
	\medskip
	
	If $\lambda' \neq 0$, by Lemma \ref{lem appb 1} there exists $C>0$ such that for all convex subsets $Q \subset P_J$ and all $r \ge 1$ we have $$ \left| \int_Q (y')^{m'} e^{r \langle \lambda' , y' \rangle } dy' \right| \leq C \cdot r^{-1}.$$ We have therefore $$ \left| \triangle \right| \leq C \cdot r^{d - 1} \cdot  \int_B \int_{ \bbR_{\ge 0}^{J^c}} (x'')^{m''} e^{\langle {\rm Re} (\lambda'' ) , x'' \rangle } | \phi'' (b , x'')| dx'' db $$ and thus indeed the desired limit is equal to $0$.
	
	\medskip
	
	Now we assume $\lambda' = 0$. Using Lebesgue's dominated convergence theorem we have $$ \lim_{r \to +\infty} \frac{1}{r^d} \triangle =  \lim_{r \to +\infty} \int_B \int_{\bbR_{\ge 0}^{J^c}} (x'')^{m''} e^{\langle \lambda'' , x'' \rangle } \phi'' (b , x'') \left( \int_{ P_{(<r)}^{x''}} (y')^{m'}  dy'  \right) dx'' db = $$ $$ = \int_B \int_{ \bbR_{\ge 0}^{J^c}} (x'')^{m''} e^{\langle \lambda'' , x'' \rangle } \phi'' (b , x'') \left( \int_{P_J} (y')^{m'}  dy'  \right) dx'' db$$ as desired.
\end{proof}

\begin{claim}\label{clm appb}
	Let $\{ ( \lambda^{(\ell)} , m^{(\ell)}) \}_{\ell \in [p]} \subset \bbC_{\leq 0}^{n} \times \bbZ_{\ge 0}^n$ be a collection of pairwise different couples. Let $\{ \phi^{(\ell)} \}_{\ell \in [p]}$ be a collection of nice functions $B \times \bbR_{\ge 0}^n \to \bbC$, such that for every $\ell \in [p]$ the function $b \mapsto \phi^{(\ell)} (b , +\infty)$ on $B$ is not identically zero. Denote $d := \max_{\ell \in [p]} d(2{\rm Re} (\lambda^{(\ell)}) , 2 m^{(\ell)})$.
	The limit $$ \lim_{r \to +\infty} \frac{1}{r^d} \int_B \int_{P_{<r}} \left| \sum_{\ell \in [p]} x^{m^{(\ell)}} e^{\langle \lambda^{(\ell)} , x \rangle } \phi^{(\ell)} (b , x)\right|^2 dx db$$ exists and is strictly positive.
\end{claim}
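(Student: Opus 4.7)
The plan follows the pattern of Claim \ref{clm appb lat}: expand the square, apply Lemma \ref{lem appb 2} to each cross term, and identify which pairs survive at the leading rate $r^d$. Setting $A_\ell(b,x) := x^{m^{(\ell)}}e^{\langle \lambda^{(\ell)}, x\rangle}\phi^{(\ell)}(b,x)$, we have $|\sum_\ell A_\ell|^2 = \sum_{\ell_1,\ell_2} A_{\ell_1}\overline{A_{\ell_2}}$, and each cross term has parameters $(\lambda^{(\ell_1)} + \overline{\lambda^{(\ell_2)}}, m^{(\ell_1)} + m^{(\ell_2)})$ together with the nice function $\phi^{(\ell_1)}\overline{\phi^{(\ell_2)}}$ (niceness holds because $\overline{{\rm ei}(x)} = {\rm ei}(x)$ for real $x$, so the conjugate is realized by $(b,w) \mapsto \overline{\phi^{(\ell_2)\circ}(b,\bar w)}$, which is continuous and holomorphic in $w$). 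Writing $D(\ell) := d(2{\rm Re}(\lambda^{(\ell)}), 2m^{(\ell)})$ and $D(\ell_1,\ell_2) := d(\lambda^{(\ell_1)} + \overline{\lambda^{(\ell_2)}}, m^{(\ell_1)} + m^{(\ell_2)})$, an elementary bound yields $2D(\ell_1,\ell_2) \leq D(\ell_1) + D(\ell_2) \leq 2d$, with equality throughout iff $J_{\lambda^{(\ell_1)}} = J_{\lambda^{(\ell_2)}} =: J$ and $D(\ell_1) = D(\ell_2) = d$; moreover, by the vanishing clause of Lemma \ref{lem appb 2}, a nonzero contribution at rate $r^d$ additionally forces ${\rm res}_J \lambda^{(\ell_1)} = {\rm res}_J \lambda^{(\ell_2)} =: \alpha$.

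I would then group the surviving indices into classes $\calL_{J,\alpha} := \{\ell \in [p] : D(\ell) = d,\ J_{\lambda^{(\ell)}} = J,\ {\rm res}_J \lambda^{(\ell)} = \alpha\}$. Using the explicit formula in Lemma \ref{lem appb 2}, the sum of cross term contributions over $\calL_{J,\alpha}^2$ factors as $\int_B \int_{P_J \times \bbR_{\ge 0}^{J^c}} |G_{J,\alpha}(y,z,b)|^2\, dy\, dz\, db$, where $G_{J,\alpha}(y,z,b) := \sum_{\ell \in \calL_{J,\alpha}} y^{{\rm res}_J m^{(\ell)}} z^{{\rm res}_{J^c} m^{(\ell)}} e^{\langle {\rm res}_{J^c} \lambda^{(\ell)}, z\rangle} \phi^{(\ell)\circ}(b, {\rm ext}^{J^c}({\rm ei}(z)))$. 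The total limit is the sum of these non-negative class contributions over distinct $(J,\alpha)$, so strict positivity will follow from exhibiting one class for which the integral is strictly positive, and we take the class containing any $\ell^*$ with $D(\ell^*) = d$.

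The hard part will be showing $G_{J,\alpha} \not\equiv 0$ on $P_J \times \bbR_{\ge 0}^{J^c} \times B$. Linear independence of $\{y^{m'}\}_{m' \in \bbZ_{\ge 0}^J}$ on the open set $P_J$ reduces this to showing, for each $m'$ arising as ${\rm res}_J m^{(\ell)}$ for some $\ell \in \calL_{J,\alpha}$, that $H_{m'}(z,b) := \sum_{\ell \in \calL_{J,\alpha,m'}} z^{{\rm res}_{J^c} m^{(\ell)}} e^{\langle {\rm res}_{J^c} \lambda^{(\ell)}, z\rangle} \phi^{(\ell)\circ}(b, {\rm ext}^{J^c}({\rm ei}(z))) \not\equiv 0$, where $\calL_{J,\alpha,m'} := \{\ell \in \calL_{J,\alpha} : {\rm res}_J m^{(\ell)} = m'\}$. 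Expanding the Taylor series $\phi^{(\ell)\circ}(b,{\rm ext}^{J^c}({\rm ei}(z))) = \sum_{\nu \in \bbZ_{\ge 0}^{J^c}} c_{\ell,\nu}(b) e^{-\langle \nu, z\rangle}$ with $c_{\ell,0}(b) = \phi^{(\ell)}(b,+\infty)$, rewrites $H_{m'}$ as $\sum_{\ell,\nu} c_{\ell,\nu}(b) z^{{\rm res}_{J^c} m^{(\ell)}} e^{\langle {\rm res}_{J^c} \lambda^{(\ell)} - \nu, z\rangle}$. Fix a lexicographic order on $J^c$ and pick $\ell^* \in \calL_{J,\alpha,m'}$ for which ${\rm Re}({\rm res}_{J^c} \lambda^{(\ell^*)})$ is lex-maximal.

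Any pair $(\ell,\nu)$ contributing the exponent-monomial type $({\rm res}_{J^c} \lambda^{(\ell^*)}, {\rm res}_{J^c} m^{(\ell^*)})$ to $H_{m'}$ satisfies ${\rm res}_{J^c} \lambda^{(\ell)} = {\rm res}_{J^c} \lambda^{(\ell^*)} + \nu$; for $\nu \neq 0$, this makes ${\rm Re}({\rm res}_{J^c} \lambda^{(\ell)})$ lex-larger than ${\rm Re}({\rm res}_{J^c} \lambda^{(\ell^*)})$, contradicting the choice of $\ell^*$, so $\nu = 0$, and then the constraints defining $\calL_{J,\alpha,m'}$ together with distinctness of the triples $(\lambda^{(\ell)}, m^{(\ell)})$ force $\ell = \ell^*$. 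Hence this type's coefficient in $H_{m'}$ is exactly $\phi^{(\ell^*)}(b,+\infty)$, which is not identically zero in $b$ by hypothesis. Linear independence of the distinct generalized exponential-monomials $z^m e^{\langle \mu, z\rangle}$ (proved by iteratively peeling off the lex-maximal term as $z \to +\infty$ along the dominant direction) then forces $H_{m'} \not\equiv 0$, yielding $G_{J,\alpha} \not\equiv 0$ and therefore the desired strict positivity of the limit.
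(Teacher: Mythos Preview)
Your proof is correct and follows essentially the same route as the paper: expand the square, apply Lemma~\ref{lem appb 2} term by term, observe that only cross terms with $J_{\lambda^{(\ell_1)}}=J_{\lambda^{(\ell_2)}}$, $D(\ell_1)=D(\ell_2)=d$, and ${\rm res}_J\lambda^{(\ell_1)}={\rm res}_J\lambda^{(\ell_2)}$ survive, recognize the resulting sum as a sum of non-negative integrals of squares indexed by classes $(J,\alpha)$, and then prove one such square is not identically zero. The only notable difference is in the final non-vanishing step: the paper isolates a $\lambda^{(\ell)}$ that is maximal for the partial order $\mu_1\le\mu_2\iff\mu_2-\mu_1\in\bbZ_{\ge 0}^{J^c}$ and invokes Lemma~\ref{lem appb 4}, whereas you pick $\ell^*$ with lexicographically maximal ${\rm Re}({\rm res}_{J^c}\lambda^{(\ell)})$ and argue directly that the Taylor expansion of the $\phi^{(\ell)}$'s cannot cancel the type $({\rm res}_{J^c}\lambda^{(\ell^*)},{\rm res}_{J^c}m^{(\ell^*)})$; this is really the same idea (lex-maximal implies maximal for the partial order), just with Lemma~\ref{lem appb 4} inlined.
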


\begin{proof}
	Let us break the integrand into a sum following $$\left| \sum_{\ell \in [p]} A_{\ell} \right|^2 = \sum_{\ell_1 , \ell_2 \in [p]} A_{\ell_1} \overline{A_{\ell_2}}.$$ Using Lemma \ref{lem appb 2} we see the that resulting limit breaks down as a sum, over $(\ell_1 ,\ell_2) \in [p]^2$, of limits which exist, so the only thing to check is that the resulting limit is non-zero. It is easily seen that the limit at the $(\ell_1 , \ell_2)$ place is zero unless $d(\lambda^{(\ell_1)} , m^{(\ell_1)} ) = d$, $d(\lambda^{(\ell_2)} , m^{(\ell_2)} ) = d$, $J_{\lambda^{(\ell_1)}} = J_{\lambda^{(\ell_2)}}$ and ${\rm res}_{J^{(\ell_1)}} (\lambda^{(\ell_1)}) = {\rm res}_{J^{(\ell_1)}} (\lambda^{(\ell_2)})$. We thus can reduce to the case when, for a given $J \subset [n]$, we have $J_{\lambda^{(\ell)}} = J$ for all $\ell \in [p]$, we have $d(\lambda^{(\ell)} , m^{(\ell)}) = d$ for all $\ell \in [p]$, and we have ${\rm res}_{J} (\lambda^{(\ell_1)}) = {\rm res}_{J} (\lambda^{(\ell_2)})$ for all $\ell_1 , \ell_2 \in [p]$. We then obtain, using Lemma \ref{lem appb 2}, that our overall limit equals $$ \int_B \int_{\bbR_{\ge 0}^{J^c}} \int_{P_J} \left|  \sum_{\ell \in [p]} y^{{\rm res}_J (m^{(\ell)})} z^{{\rm res}_{J^c} (m^{(\ell)})} e^{\langle {\rm res}_{J^c} (\lambda^{(\ell)}) , z\rangle} {\rm res}_{J^c} \phi^{(\ell)} (b , z) \right|^2 dy dz db.$$ It is therefore enough to check that $$  \sum_{\ell \in [p]} y^{{\rm res}_J (m^{(\ell)})} z^{{\rm res}_{J^c} (m^{(\ell)})} e^{\langle {\rm res}_{J^c} (\lambda^{(\ell)}) , z\rangle} {\rm res}_{J^c} \phi^{(\ell)} (b , z),$$ a function in $(b,z,y ) \in B \times \bbR_{\ge 0}^{J^c} \times P_J$, is not identically zero. By the local linear independence of powers of $y$, we can further assume that ${\rm res}_J (m^{(\ell)})$ is independent of $\ell \in [p]$, and want to check that $$ \sum_{\ell \in [p]} z^{{\rm res}_{J^c} (m^{(\ell)})} e^{\langle {\rm res}_{J^c} (\lambda^{(\ell)}) , z\rangle} {\rm res}_{J^c} \phi^{(\ell)} (b , z),$$ a function in $(b,z) \in B \times \bbR_{\ge 0}^{J^c}$, is not identically zero. Notice that, by our assumptions, the elements in the collection $ \{ ( {\rm res}_{J^c} (\lambda^{(\ell)})  , {\rm res}_{J^c} (m^{(\ell)})  ) \}_{\ell \in [p]}$ are pairwise different and for every $\ell \in [p]$, the function $b \mapsto \phi^{(\ell)} (b , {\rm ext}^{J^c} (+\infty))$ on $B$ is not identically zero. Considering the partial order on $\bbC^{J^c}$ given by $\mu_1 \leq \mu_2$ if $\mu_2 - \mu_1 \in \bbZ_{\ge 0}^{J^c}$, we can pick $\ell \in [p]$ for which ${\rm res}_{J^c} (\lambda^{(\ell)})$ is maximal among the $\{ {\rm res}_{J^c} (\lambda^{(\ell')}) \}_{\ell' \in [p]}$. We can then pick $b \in B$ such that $\phi^{(\ell)} (b , {\rm ext}^{J^c} (+\infty)) \neq 0$. We then boil down to Lemma \ref{lem appb 4} that follows.
	
\end{proof}

In the end of the proof of Claim \ref{clm appb} we have used the following:

\begin{lemma}\label{lem appb 4}
	Let $\{ (\lambda^{(\ell)} , m^{(\ell)}) \}_{\ell \in [p]} \subset \bbC^n \times \bbZ_{\ge 0}^n$ be a collection of pairwise different couples. Let $\{ \phi^{(\ell)}\}_{\ell \in [p]}$ be a collection of nice functions $\bbR_{\ge 0}^n \to \bbC$ (so here $B = \{ 1 \}$). Suppose that $\phi^{(\ell)} (+\infty) \neq 0$ for some $\ell \in [p]$ for which $\lambda^{(\ell)}$ is maximal among the $\{ \lambda^{(\ell')} \}_{\ell' \in [p]}$ with respect to the partial order $\lambda_1 \leq \lambda_2$ if $\lambda_2 - \lambda_1 \in \bbZ_{\ge 0}^n$. Then the function $$ x \mapsto \sum_{\ell \in [p]} x^{m^{(\ell)}} e^{\langle \lambda^{(\ell)} , x \rangle} \phi^{(\ell)} (x)$$ on $\bbR_{\ge 0}^n$ is not identically zero.
\end{lemma}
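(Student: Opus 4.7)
The plan is to argue by contradiction: suppose that $F(x) := \sum_{\ell} x^{m^{(\ell)}} e^{\langle \lambda^{(\ell)}, x\rangle} \phi^{(\ell)}(x)$ vanishes identically on $\bbR_{\ge 0}^n$, and reach a contradiction by exhibiting a nonzero Laurent coefficient of its multivariable Laplace transform at a carefully chosen point. First I would use niceness of each $\phi^{(\ell)}$ to expand
\begin{equation*}
\phi^{(\ell)}(x) \;=\; \sum_{k \in \bbZ_{\ge 0}^n} a^{(\ell)}_k\, e^{-\langle k, x\rangle},
\end{equation*}
absolutely convergent on $\bbR_{\ge 0}^n$, with geometric decay $|a^{(\ell)}_k| \le C\, R^{-|k|}$ for some $R > 1$ (since $\phi^{(\ell),\circ}$ is holomorphic on a neighbourhood of $D^n$) and with the crucial identity $a^{(\ell)}_0 = \phi^{(\ell)}(+\infty)$. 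For ${\rm Re}(s_j) > \max_\ell {\rm Re}(\lambda^{(\ell)}_j)$ in every $j$, Fubini applied to $\tilde F(s) := \int_{\bbR_{\ge 0}^n} F(x)\, e^{-\langle s, x\rangle}\, dx$ together with $\int_0^{\infty} x^m e^{-\alpha x}\, dx = m!/\alpha^{m+1}$ gives
\begin{equation*}
\tilde F(s) \;=\; \sum_{\ell,\, k} a^{(\ell)}_k \prod_{j=1}^n \frac{m^{(\ell)}_j!}{(s_j - \lambda^{(\ell)}_j + k_j)^{m^{(\ell)}_j + 1}},
\end{equation*}
which extends to a meromorphic function on $\bbC^n$; under the assumption $F \equiv 0$ this forces $\tilde F \equiv 0$.

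\smallskip

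The heart of the argument is to examine the coefficient of the Laurent monomial $\prod_j (s_j - \lambda^{(\ell_0)}_j)^{-(m^{(\ell_0)}_j + 1)}$ in the expansion of $\tilde F$ at $s = \lambda^{(\ell_0)}$. A summand indexed by $(\ell, k)$ has a pole at $s = \lambda^{(\ell_0)}$ in every coordinate only when $\lambda^{(\ell)} - k = \lambda^{(\ell_0)}$, i.e.\ $\lambda^{(\ell)} = \lambda^{(\ell_0)} + k$ with $k \in \bbZ_{\ge 0}^n$; the maximality hypothesis on $\lambda^{(\ell_0)}$ forces $k = 0$ and $\lambda^{(\ell)} = \lambda^{(\ell_0)}$. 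Among the finitely many $\ell$ with $\lambda^{(\ell)} = \lambda^{(\ell_0)}$, distinctness of the couples $(\lambda^{(\ell)}, m^{(\ell)})$ forces distinct $m^{(\ell)}$, so only the summand $(\ell_0, 0)$ produces the Laurent monomial with the specific multi-index of pole orders $(m^{(\ell_0)}_j + 1)_j$; the other polar summands sit at distinct Laurent monomials. The coefficient in question therefore equals $a^{(\ell_0)}_0 \prod_j m^{(\ell_0)}_j! = \phi^{(\ell_0)}(+\infty) \prod_j m^{(\ell_0)}_j! \ne 0$, contradicting $\tilde F \equiv 0$.

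\smallskip

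The main technical step to verify is that Laurent coefficient extraction at $s = \lambda^{(\ell_0)}$ commutes with the infinite summation. For this I would choose a polydisc $\Delta$ about $\lambda^{(\ell_0)}$ small enough that no exponent $\lambda^{(\ell)} - k$ except $\lambda^{(\ell_0)}$ itself lies in $\overline{\Delta}$. This is possible: by maximality, for each $\ell$ with $\lambda^{(\ell)} \ne \lambda^{(\ell_0)}$ the point $\lambda^{(\ell_0)}$ lies outside the closed discrete set $\lambda^{(\ell)} - \bbZ_{\ge 0}^n$, hence the distance from $\lambda^{(\ell_0)}$ to this set is strictly positive, and there are only finitely many such $\ell$. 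On $\Delta$, every summand with $\lambda^{(\ell)} - k \ne \lambda^{(\ell_0)}$ is holomorphic, and the geometric decay of $a^{(\ell)}_k$ combined with at-worst polynomial growth in $k$ of $\prod_j |s_j - \lambda^{(\ell)}_j + k_j|^{-(m^{(\ell)}_j + 1)}$ on $\Delta$ yields uniform absolute convergence of their sum to a holomorphic function on $\Delta$. Their total contribution to any negative-power Laurent coefficient therefore vanishes, and only the finitely many polar summands contribute, as computed above.
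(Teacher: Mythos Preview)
Your strategy is sound and essentially matches the hint the paper gives (the paper omits the proof, saying only ``one develops the $\phi^{(\ell)}$ into power series in $e^{-x_1},\ldots,e^{-x_n}$ and uses separation by generalized eigenvalues of the partial differentiation operators $\partial_{x_1},\ldots,\partial_{x_n}$''). Your Laplace transform is precisely a device for performing that eigenspace separation: the pole of $\tilde F$ at $s=\lambda^{(\ell_0)}$ with the prescribed multi-order isolates the coefficient of $x^{m^{(\ell_0)}}e^{\langle\lambda^{(\ell_0)},x\rangle}$ in the expansion of $F$.

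There is, however, a genuine imprecision in your last paragraph. You claim that once $\Delta$ is chosen so that no exponent $\lambda^{(\ell)}-k$ other than $\lambda^{(\ell_0)}$ lies in $\overline\Delta$, every non-polar summand is holomorphic on $\Delta$. This is false: the condition $\lambda^{(\ell)}-k\notin\overline\Delta$ only says that \emph{some} coordinate $\lambda^{(\ell)}_j-k_j$ lies outside the $j$-th disc, not all of them. For instance, with $n=2$, $\lambda^{(\ell_0)}=(0,0)$ and some other $\lambda^{(\ell)}=(0,i)$, the summand $(\ell,k=(0,0))$ has a pole along $s_1=0$, which meets every polydisc about the origin. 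So the ``holomorphic part'' you describe is not actually holomorphic on $\Delta$, and the uniform-convergence claim there is not well-posed.

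The fix is easy and does not change the architecture of your argument: do not attempt to work on the open polydisc. Instead, choose radii $\rho_j$ small enough that, for every $\ell$ and every $k_j\in\bbZ_{\ge 0}$, either $\lambda^{(\ell)}_j-k_j=\lambda^{(\ell_0)}_j$ or $|\lambda^{(\ell)}_j-k_j-\lambda^{(\ell_0)}_j|>\rho_j$ (possible since for each of the finitely many $\ell$ the set $\{\lambda^{(\ell)}_j-k_j\}_{k_j\ge 0}$ is discrete). On the torus $T=\prod_j\{|s_j-\lambda^{(\ell_0)}_j|=\rho_j\}$ the series for $\tilde F$ converges uniformly (your geometric-decay estimate now applies cleanly), so one may integrate term by term against $\prod_j(s_j-\lambda^{(\ell_0)}_j)^{m^{(\ell_0)}_j}$. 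Each factor integral $\oint_{|s_j-\lambda^{(\ell_0)}_j|=\rho_j}(s_j-\lambda^{(\ell_0)}_j)^{m^{(\ell_0)}_j}(s_j-\lambda^{(\ell)}_j+k_j)^{-(m^{(\ell)}_j+1)}\,ds_j$ vanishes unless $\lambda^{(\ell)}_j-k_j=\lambda^{(\ell_0)}_j$; the product therefore vanishes unless $\lambda^{(\ell)}-k=\lambda^{(\ell_0)}$, and your maximality argument finishes exactly as written.
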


\begin{proof}
	We omit the proof - one develops the $\phi^{(\ell)}$ into power series in $e^{-x_1} , \ldots , e^{-x_n}$ and uses separation by generalized eigenvalues of the partial differentiation operators $\partial_{x_1} , \ldots , \partial_{x_n}$.
\end{proof}

\end{document}